\newtheorem{theorem}{Theorem}[section]
\newtheorem{lemma}[theorem]{Lemma}
\newtheorem{corollary}[theorem]{Corollary}
\newtheorem{proposition}[theorem]{Proposition}
\theoremstyle{definition}
\theoremstyle{remark}
\newtheorem{remark}[theorem]{Remark}
\numberwithin{equation}{section}
\begin{document}
\title[Four-dimensional shrinkers with nonnegative Ricci curvature]
{Four-dimensional shrinkers with nonnegative Ricci curvature}

\author{Guoqiang Wu}
\address{School of Science, Zhejiang Sci-Tech University, Hangzhou 310018, China}
\email{gqwu@zstu.edu.cn}

\author{Jia-Yong Wu}
\address{Department of Mathematics and Newtouch Center for Mathematics,
Shanghai University, Shanghai 200444, China}
\email{wujiayong@shu.edu.cn}
\thanks{}
\subjclass[2010]{Primary 53C25; Secondary 53C20, 53C21.}
\dedicatory{}
\date{\today}

\keywords{Shrinking gradient Ricci soliton, Rigidity, Curvature pinching.}
\begin{abstract}
In this paper, we investigate classifications of $4$-dimensional simply
connected complete noncompact nonflat shrinkers satisfying
$Ric+\mathrm{Hess}\,f=\tfrac 12g$ with nonnegative Ricci curvature. One one hand,
we show that if the sectional curvature $K\le 1/4$ or the sum of smallest two
eigenvalues of Ricci curvature has a suitable lower bound, then the shrinker is
isometric to $\mathbb{R}\times\mathbb{S}^3$. We also show that if the scalar
curvature $R\le 3$ and the shrinker is asymptotic to $\mathbb{R}\times\mathbb{S}^3$,
then the Euler characteristic $\chi(M)\geq 0$ and equality holds if and only if
the shrinker is isometric to $\mathbb{R}\times\mathbb{S}^3$. On the other hand,
we prove that if $K\le 1/2$ (or the bi-Ricci curvature is nonnegative) and
$R\le\tfrac{3}{2}-\delta$ for some $\delta\in (0,\tfrac{1}{2}]$, then the
shrinker is isometric to $\mathbb{R}^2\times\mathbb{S}^2$. The proof of
these classifications mainly depends on the asymptotic analysis by the evolution
of eigenvalues of Ricci curvature, the Gauss-Bonnet-Chern formula with boundary
and the integration by parts.
\end{abstract}
\maketitle

\section{Introduction}

Let $(M,g)$ be an $n$-dimensional complete Riemannian manifold. If there
exists a smooth potential function $f$ on $(M,g)$ such that
\begin{align}\label{Eq1}
Ric+\mathrm{Hess}\,f=\tfrac 12g,
\end{align}
where $Ric$ is the Ricci curvature of $(M,g)$ and $\text{Hess}\,f$ is the
Hessian of $f$, then the triple $(M, g, f)$ is called a \emph{gradient
shrinking Ricci soliton} or \emph{shrinker} (see \cite{[Ham]}). Shrinkers
are natural generalizations of Einstein manifolds and they arise as limits
of dilations of Type I singularities in the Ricci flow \cite{[Ham]}. They
can also be regarded as critical points of the Perelman's entropy functional
and play a significant role in Perelman's resolution of the Poincar\'e
conjecture \cite{[Pe],[Pe2],[Pe3]}. Therefore, it would be very important
to understand and even classify shrinkers.

For dimension $2$, Hamilton \cite{[Ham]} completely classified shrinkers.
For dimension $3$, following \cite{[CCZ],Iv,[NW],[Pe2]}, the classification
is also complete. For dimension $4$ (or higher), the complete classification
remains open but there has been a lot of progress under various curvature
assumptions. For example, Hamilton \cite{H86} gave a classification of
$4$-dimensional compact shrinkers with positive curvature operator. H. Chen
\cite{Chh} later got the same classification under the weaker curvature
assumption of $2$-positive curvature operator (the higher case due to \cite{BW}).
In the noncompact case, Naber \cite{[Na]} classified $4$-dimensional
noncompact shrinkers with bounded and nonnegative curvature operator.
In \cite{[MW]}, Munteanu-Wang removed Naber's bounded curvature assumption
and classified shrinkers of dimension $n\ge4$ with nonnegative curvature operator.
Recently there are many interesting classifications under special curvature
conditions, such as locally conformally flat, half-conformally flat,
harmonic Weyl curvature, half harmonic Weyl curvature, Bach-flat, constant scalar
curvature, positive isotropic curvature, half nonnegative isotropic curvature,
Weyl curvature pinching, etc., see \cite{CaCh,CJZ,CaXi,CWZ,Cat,ChWy,Cheng-Zhou,ENM,[FIK],FrR,[FR],[GLX],LNW,[MS],[NW],
[PW],WWW,Zhz} and the references therein. For the K\"ahler case, the reader can refer to \cite{CaJT,Chx,[Chetc],GuZh,[LN],[Ni],[WZ],[Zh]} and the references therein. Besides, there
are some important works to figure out the rigidity of cylinders in the moduli space
of shrinkers,  see, e.g., \cite{CoMi,LiW19,LW24} and the references therein.

As we currently known, every nontrivial complete noncompact $4$-dimensional
shrinker either splits locally as a product or has a single end smoothly
asymptotic to a cone. It is naturally conjectured that any noncompact
$4$-dimensional shrinker must fit one of the two above, at least asymptotically.
Based on this, Munteanu-Wang \cite{[MWa],[MW17],[MW19]} proposed an interestingly
dichotomy in terms of the scalar curvature $R$. If $\lim_{x\to\infty}R=0$ , they
showed that each end of $(M^4,g)$ must be smoothly asymptotic to a cone. If $R$
is bounded and $R\ge c>0$, they showed that either $\lim_{x\to\infty}R=\frac 32$
and each end of $(M^4,g)$ is smoothly asymptotic to a quotient of
$\mathbb{R}\times\mathbb{S}^3$ or $\lim_{x\to\infty}R=1$ and the sequence
of pointed manifolds $(M^4,g,x_i)$ subconverges to a quotient of
$\mathbb{R}^2\times\mathbb{S}^2$ for any sequence of points $x_i$ going to
infinity along an integral curve of $\nabla f$. (See also \cite{[Na]} for
a similar version under stronger curvature assumptions by a different argument.)
When $(M^4,g)$ is K\"ahler, the above proposed dichotomy indeed occurs and
it has been completely classified recently by combining many efforts
\cite{BCCD,CCD,CDS,[LW2023]} and the references therein. When $(M^4,g)$
is Riemannian, it remains open but there has been some progress on this
direction. For example, Kotschwar-Wang \cite{KWa15} showed that two shrinkers
$C^2$-asymptotic to the same cone must be isometric. So the classification
issue of asymptotically conical shrinkers is reduced to that of asymptotic cones.
Indeed, Kotschwar-Wang \cite{KWa24} further showed that complete shrinker which agrees
to infinite order at spatial infinity with a standard cylinder along some end
must be isometric to the cylinder on that end.

In another direction, Munteanu-Wang \cite{[MW]} showed that any $n$-dimensional
shrinker with positive Ricci curvature and nonnegative sectional curvature must
be compact, which answers a question of Cao \cite{[Cao]}. This compact
property can be extended to the positive $2$nd-Ricci curvature case \cite{[LN]}
and various curvature-pinching conditions \cite{[WW]}. H.-D. Cao indeed
proposed a question that any shrinker with positive Ricci curvature
is compact. This question is open in dimension $4$ and higher. More generally,
it is expected that any $4$-dimensional simply connected complete noncompact
nonflat shrinker with nonnegative Ricci curvature should be isometric to
$\mathbb{R}\times \mathbb{S}^3$ or $\mathbb{R}^2\times \mathbb{S}^2$.

Based on Cao's problem, in this paper, we are concerned with some classifications
of $4$-dimensional shrinkers with nonnegative Ricci curvature under various
additional conditions. We actually focus on the asymptotic limit confirmed by
Munteanu-Wang \cite{[MW19]} completely determining shrinkers by choosing suitable
curvature or topological conditions. In the whole paper, let $\lambda_1\le \lambda_2\le\lambda_3\le\lambda_4$ denote eigenvalues of  Ricci curvature in
$(M,g,f)$ and let $\{e_i\}_{i=1}^4$ denote the corresponding eigenvectors.
We state main results of this paper as follows.



\subsection{Rigidity when the asymptotic limit is $\mathbb{R}\times\mathbb{S}^3$}

In this subsection we mainly classify shrinkers when asymptotic limit
of shrinker is $\mathbb{R}\times\mathbb{S}^3$. First, we consider the case when
the sectional curvature has an upper bound.
\begin{theorem}\label{clasfia}
Let $(M,g, f)$ be a $4$-dimensional simply connected complete noncompact
nonflat shrinker with the Ricci curvature $Ric\ge0$. If the sectional curvature
$K\le 1/4$, then $(M, g, f)$ must be isometric to
$\mathbb{R}\times\mathbb{S}^3$.
\end{theorem}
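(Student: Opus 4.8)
The plan is to run the curvature hypothesis through the asymptotic dichotomy of Munteanu--Wang \cite{[MW19]}. First I would use $K\le 1/4$ together with $Ric\ge 0$ to pin down the scalar curvature. On any shrinker $R\ge 0$, and after normalizing $f$ so that $R+\abs{\nabla f}^2=f$, the identity $\nabla R=2Ric(\nabla f)$ and $Ric\ge 0$ show that $R$ is nondecreasing along the gradient flow lines of $f$, each of which limits in backward time to the critical set $\Sigma=\{\nabla f=0\}$; since $f$ is proper and $f=R\le 3$ on $\Sigma$, this set is compact, so $\inf_M R$ is attained on $\Sigma$. If it vanished there, then $Ric=0$ at that point and $R$ would be a nonnegative supersolution of $\Delta_f R-R=-2\abs{Ric}^2\le 0$ attaining an interior minimum, whence $R\equiv 0$ by the strong minimum principle and $M$ is flat --- excluded. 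So $\inf_M R>0$. For the upper bound, in the Ricci eigenframe $\lambda_i=\sum_{j\ne i}K(e_i,e_j)\le 3/4$, so $R=\sum_i\lambda_i\le 3$. Thus $0<\inf_M R\le R\le 3$.

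Next, since $R$ is bounded and bounded below by a positive constant, the Munteanu--Wang dichotomy quoted in the introduction applies. If $M$ has at least two ends, then by $Ric\ge 0$ and the Cheeger--Gromoll splitting theorem $M=\mathbb{R}\times N^3$ isometrically, where $(N^3,g_N)$ is a simply connected complete nonflat $3$-dimensional shrinker with $Ric\ge 0$; by the classification of $3$-dimensional shrinkers it is $\mathbb{S}^3$ or $\mathbb{R}\times\mathbb{S}^2$, and the latter is impossible because its spherical factor, normalized to \eqref{Eq1}, is $\mathbb{S}^2$ of radius $\sqrt 2$, which has constant curvature $1/2>1/4$. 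Hence $M=\mathbb{R}\times\mathbb{S}^3$ and we are done. Otherwise $M$ has a single end, and by \cite{[MW19]} either $\lim_{x\to\infty}R=1$ with $(M,g,x_i)$ subconverging smoothly to a quotient of $\mathbb{R}^2\times\mathbb{S}^2$, or $\lim_{x\to\infty}R=3/2$ with the end smoothly asymptotic to a quotient of $\mathbb{R}\times\mathbb{S}^3$. The first case is ruled out exactly as above, because the spherical factor of $\mathbb{R}^2\times\mathbb{S}^2$ has constant curvature $1/2$ and smooth Cheeger--Gromov convergence would force tangent $2$-planes of curvature near $1/2$ at the $x_i$. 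So the end of $M$ is smoothly asymptotic to $\mathbb{R}\times\mathbb{S}^3/\Gamma$.

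It remains to show that this situation cannot occur, i.e. that a shrinker with $Ric\ge 0$ having a genuinely cylindrical end must split off a line and therefore be two-ended. One way is to invoke the rigidity of cylindrical ends in the spirit of Kotschwar--Wang \cite{KWa15,KWa24}: if the smooth asymptotics can be upgraded to agreement to infinite order at spatial infinity, then $M$ is isometric to $\mathbb{R}\times\mathbb{S}^3/\Gamma$ on that end, hence globally by real-analyticity, contradicting having a single end (and simple connectedness forces $\Gamma$ trivial). The route closer to the methods advertised in the abstract is to work on the sublevel sets $\Omega_t=\{f\le t\}$: their boundaries become asymptotically totally geodesic, since $\mathrm{Hess}\,f=\tfrac12 g-Ric$ restricted to a level set tends to $0$, and have uniformly bounded area, so the boundary contributions in the Gauss--Bonnet--Chern formula for $\Omega_t$ tend to $0$ and one obtains $\chi(M)=\tfrac1{8\pi^2}\int_M\bigl(\abs{W}^2-\tfrac12\abs{\mathring{Ric}}^2+\tfrac1{24}R^2\bigr)\,dV$. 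Coupling this with the evolution inequality for the smallest Ricci eigenvalue, $\Delta_f\lambda_1\le\lambda_1-2\sum_k R_{1k1k}\lambda_k$, the pinching $K\le 1/4$, $Ric\ge 0$, and the asymptotic vanishing $\lambda_1\to 0$, and integrating everything against the weight $e^{-f}$ (which annihilates all boundary terms), should force $\lambda_1\equiv 0$. Once $\lambda_1\equiv 0$, the kernel of $Ric$ is a parallel line field, $M$ de Rham splits as $\mathbb{R}\times N^3$, and we conclude $N^3=\mathbb{S}^3$ as before.

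The crux is this last step. Because only the upper bound $K\le 1/4$ is available, the differential inequality for $\lambda_1$ cannot be closed by a pointwise maximum principle, and the noncompactness of $M$ forces one into a global integral argument in which the precise $\mathbb{R}\times\mathbb{S}^3$-asymptotics must be used both to make the Gauss--Bonnet--Chern boundary integral disappear and to justify the integrations by parts; extracting the rigidity $\lambda_1\equiv 0$ --- rather than a mere one-sided inequality --- from the resulting identity is where the real difficulty lies.
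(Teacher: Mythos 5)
Your reduction to the cylindrical-end case is sound and matches the paper's opening moves: $K\le 1/4$ with $Ric\ge 0$ gives $R\le 3$, hence bounded curvature by Munteanu--Wang, and the asymptotic alternatives $\mathbb{R}^4$ and $\mathbb{R}\times(\mathbb{R}\times\mathbb{S}^2)$ are excluded exactly as you say (flatness, resp.\ a $2$-plane of curvature $1/2$ in the limit). But the decisive step --- what happens once the end is known to be asymptotic to $\mathbb{R}\times\mathbb{S}^3/\Gamma$ --- is precisely what you leave unproved. Your route (a) via Kotschwar--Wang requires upgrading smooth Cheeger--Gromov convergence to infinite-order agreement with the cylinder, which is not available; and in route (b) you state that integrating $\Delta_f\lambda_1\le\lambda_1-2\sum_kK_{1k}\lambda_k$ against $e^{-f}$ ``should force $\lambda_1\equiv 0$,'' then concede that extracting this rigidity ``is where the real difficulty lies.'' That difficulty is the theorem. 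Two specific ingredients are missing. First, the raw evolution inequality does not close: the paper's Lemma \ref{pinc1} performs a completion-of-squares pairing $(\lambda_i-\tfrac12)$ with $(K_{jk}-\tfrac14)$ (this is exactly where $K\le 1/4$ enters) to recast the reaction term as $\tfrac12\Delta_fR+(R-\tfrac32)(R-\tfrac54)+(4-3R)\lambda_1+3\lambda_1^2$, so that after the asymptotics kill the last two terms the integral identity pinches between two boundary terms of opposite sign and forces $(R-\tfrac32)(R-\tfrac54)\equiv 0$, hence $R\equiv\tfrac32$ outside a compact set; the conclusion then follows from analyticity and the constant-scalar-curvature classification of Fern\'andez-L\'opez--Garc\'{\i}a-R\'{\i}o, \emph{not} from $\lambda_1\equiv 0$ and a de Rham splitting. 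Second, your assertion that the weight $e^{-f}$ ``annihilates all boundary terms'' is false: the integration is over $M\setminus D(\overline a)$ for finite $\overline a$, and the inner boundary term $\int_{\Sigma(\overline a)}\langle\nabla\lambda_1,\nabla f/|\nabla f|\rangle e^{-f}$ must be shown nonpositive for some well-chosen level $\overline a$ (the paper does this via a divergence/co-area/mean-value argument, using the quantitative decay $\lambda_1=o(1)/f$ from Lemma \ref{degen}, not merely $\lambda_1\to 0$). The Gauss--Bonnet--Chern machinery you invoke belongs to the proof of Theorem \ref{clasfi0}, not this theorem. As it stands, the proposal is a correct framing with the central rigidity argument absent.
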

In Theorem \ref{clasfia}, the upper bound of sectional curvature is sharp,
since $\mathbb{R}\times\mathbb{S}^3$ has exact sectional curvature bound $1/4$.
The theorem proof combines the Munteanu-Wang's argument \cite{[MW],[MW19]}
and Naber's structure theorem \cite{[Na]} (an improved version in \cite{[MW19]}).
The proof idea is as follows. If the shrinker is noncompact, by Naber's structure
theorem, there exists a sequence of pointed manifolds smoothly converges to
a limit manifold along each integral curve of $\nabla f$. Then, we study the
scalar curvature of each limit manifold by analyzing the integral curve of
$\nabla f$ and the $f$-Laplacian equation of the first eigenvalue of the
Ricci curvature. This process is involved in many delicate estimates and
integration by parts on manifolds with boundary. We are able to prove that
the integral of $f$-Laplacian evolution of the first eigenvalue indeed is
an equality. Thus, this forces the scalar curvature $R\equiv3/2$ outside
of a compact set of shrinker. Finally the conclusion follows by the real
analyticity of solitons and the complete classification of 4-dimensional
shrinkers with constant scalar curvature due to \cite{Cheng-Zhou,[FR],[PW2]}.

In the same spirit of proving Theorem \ref{clasfia}, we can apply the
asymptotic analysis method to prove another rigid characterization of
$\mathbb{R}\times \mathbb{S}^3$ under suitable assumption of the sum
of the smallest two eigenvalues of Ricci curvature.

\begin{theorem}\label{sumeig}
Let $(M,g, f)$ be a $4$-dimensional simply connected complete noncompact nonflat
shrinker with $Ric\ge0$. If $\lambda_1+\lambda_2\ge\tfrac{R}{3}$ and $R$ is bounded,
then $(M, g, f)$ must be isometric to $\mathbb{R}\times \mathbb{S}^3$.
\end{theorem}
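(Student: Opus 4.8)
The plan is to run the same asymptotic machinery used for Theorem \ref{clasfia}, replacing the pointwise sectional curvature bound $K \le 1/4$ by the eigenvalue inequality $\lambda_1 + \lambda_2 \ge R/3$. First I would recall that, since $R$ is bounded and $Ric \ge 0$, the Munteanu--Wang dichotomy \cite{[MW17],[MW19]} applies: either $\lim_{x\to\infty} R = 3/2$ with each end smoothly asymptotic to a quotient of $\mathbb{R}\times\mathbb{S}^3$, or $\lim_{x\to\infty} R = 1$ with pointed limits along integral curves of $\nabla f$ converging to a quotient of $\mathbb{R}^2\times\mathbb{S}^2$. (The case $\lim R = 0$ is excluded since $R > 0$ on a nonflat shrinker by the strong maximum principle applied to $\Delta_f R = R - 2|Ric|^2$ together with $R \ge 0$.) The goal is to rule out the $\mathbb{R}^2\times\mathbb{S}^2$ alternative and then to upgrade the $\mathbb{R}\times\mathbb{S}^3$ asymptotics to a global isometry.

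The key observation is that the hypothesis $\lambda_1 + \lambda_2 \ge R/3$ is preserved in the limit. On the limiting end modeled (up to quotient) on $\mathbb{R}^2\times\mathbb{S}^2$ — which in soliton normalization has Ricci eigenvalues $(0,0,\tfrac12,\tfrac12)$ after scaling, so $R = 1$, $\lambda_1 = \lambda_2 = 0$ — the inequality would read $0 \ge 1/3$, a contradiction. Hence the asymptotic limit must be $\mathbb{R}\times\mathbb{S}^3$, whose Ricci eigenvalues are $(0,\tfrac12,\tfrac12,\tfrac12)$, where $\lambda_1 + \lambda_2 = 1/2 = R/3$ holds with equality. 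So the condition $\lambda_1 + \lambda_2 \ge R/3$ is exactly sharp against the cylinder, just as $K \le 1/4$ was. Next I would study the $f$-Laplacian evolution of $\lambda_1$ (and of $\lambda_1 + \lambda_2$), in the sense of barriers or viscosity solutions to handle non-smoothness at eigenvalue crossings, following the Lichnerowicz-type formulas already developed for Theorem \ref{clasfia}. Integrating the differential inequality for $\lambda_1 + \lambda_2 - R/3$ against $e^{-f}$ over large sublevel sets of $f$, using the drift-integrability $\int e^{-f} < \infty$ and the weighted Bochner/Stokes identities on manifolds with boundary, I expect to show the inequality is forced to be an equality and, moreover, that $R \equiv 3/2$ outside a compact set.

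Once $R \equiv 3/2$ on the complement of a compact set, real analyticity of shrinkers in harmonic/geodesic coordinates (Kotschwar \cite{KWa24} or the standard soliton analyticity) propagates $R \equiv 3/2$ to all of $M$, so $(M,g,f)$ is a complete $4$-dimensional shrinker with constant scalar curvature $R = 3/2$. By the classification of constant-scalar-curvature shrinkers in dimension four \cite{Cheng-Zhou,[FR],[PW2]}, together with $Ric \ge 0$ and simple connectedness, $(M,g,f)$ must be $\mathbb{R}\times\mathbb{S}^3$ (the only simply connected noncompact nonflat example with $R \equiv 3/2$). The main obstacle is the same technical heart as in Theorem \ref{clasfia}: controlling the boundary terms and the eigenvalue non-smoothness when integrating the $\Delta_f$-inequality for $\lambda_1 + \lambda_2$, i.e., showing the integral identity is genuinely an equality rather than merely an inequality. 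Handling $\lambda_1 + \lambda_2$ (a sum of the two smallest eigenvalues) rather than $\lambda_1$ alone requires a Kato-type argument or an averaged frame estimate, since $\lambda_1 + \lambda_2$ is only Lipschitz and its Hessian must be interpreted in the barrier sense at crossings; establishing the correct differential inequality $\Delta_f(\lambda_1+\lambda_2) \le (\lambda_1+\lambda_2) + (\text{good terms involving } R)$ with the right sign on the curvature quadratic is where the delicate work lies, and is where I would spend most of the effort, mirroring but somewhat extending the estimates behind Theorem \ref{clasfia}.
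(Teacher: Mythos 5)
Your outline matches the paper's proof in all essentials: the sharpness of $\lambda_1+\lambda_2\ge R/3$ against the two possible asymptotic models rules out $\mathbb{R}^2\times\mathbb{S}^2$ (exactly the paper's argument via Theorem \ref{infi}), the integrated $f$-Laplacian eigenvalue inequality forces $R\equiv 3/2$ outside a compact set, and analyticity plus the constant-scalar-curvature classification \cite{Cheng-Zhou,[FR],[PW2]} finishes. One execution detail worth flagging: the paper evolves $\lambda_1$ \emph{alone}, obtaining
$\Delta_f\lambda_1\le \Delta_f R+R(R-1-\lambda_1-\lambda_2)+\lambda_1(1-3R+3\lambda_2)+4\lambda_1^2$
(using only $\lambda_3,\lambda_4\ge\lambda_2$), and the hypothesis enters purely algebraically to bound $R(R-1-\lambda_1-\lambda_2)\le R(\tfrac23R-1)$, while the remaining terms are nonpositive near infinity. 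Your preferred route of evolving $\lambda_1+\lambda_2-R/3$ would bring in the cross term $2K_{12}[(\lambda_3+\lambda_4)-(\lambda_1+\lambda_2)]$, which elsewhere in the paper (Lemma \ref{pin2}) is controlled only under an additional hypothesis such as $K\le 1/2$ or nonnegative bi-Ricci curvature; the $\Delta_f\lambda_1$ computation is the cleaner path here and is what makes the theorem go through with no curvature assumption beyond the eigenvalue pinching.
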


In Theorem \ref{sumeig}, the curvature assumption $\lambda_1+\lambda_2\ge\tfrac{R}{3}$
is sharp, since $\lambda_1+\lambda_2=\frac{R}{3}$ on $\mathbb{R}\times \mathbb{S}^3$.
If we give an accurate upper bound of the scalar curvature, we still obtain a
precise characterization of $\mathbb{R}\times\mathbb{S}^3$ by the geometric model
of the shrinker at infinity when the Euler characteristic vanishes.

\begin{theorem}\label{clasfi0}
Let $(M, g, f)$ be a $4$-dimensional simply connected complete noncompact
shrinker with $Ric\ge0$. If $R\le 3$ and $(M, g)$ is asymptotic to
$\mathbb{R}\times\mathbb{S}^3$,  then the Euler characteristic $\chi(M)\ge 0$
and equality holds if and only if the shrinker is isometric to
$\mathbb{R}\times\mathbb{S}^3$.
\end{theorem}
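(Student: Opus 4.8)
The plan is to convert the statement, via the Gauss--Bonnet--Chern formula with boundary, into a sign property of a curvature integral, and then to establish that sign from the hypotheses $Ric\ge 0$, $R\le 3$ together with the cylindrical asymptotics.

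\emph{The Gauss--Bonnet--Chern reduction.} Since $f$ is proper and $R$ is bounded ($0\le R\le 3$ on a shrinker), $\nabla f\neq 0$ outside a compact set, so for every large regular value $r$ the sublevel set $D(r)=\{f\le r\}$ is a compact domain with smooth boundary $\Sigma(r)=\{f=r\}$, and $M$ deformation retracts onto $D(r)$, whence $\chi(M)=\chi(D(r))$. Applying the four-dimensional Gauss--Bonnet--Chern formula with boundary gives
$$32\pi^2\chi(M)=\int_{D(r)}\bigl(|Rm|^2-4|Ric|^2+R^2\bigr)\,dV+\int_{\Sigma(r)}\mathcal{T}\,dA,$$
where $\mathcal{T}$ is the universal boundary transgression density; each monomial of $\mathcal{T}$ carries at least one factor of the second fundamental form $h$ of $\Sigma(r)$, so $\mathcal{T}$ vanishes identically on totally geodesic hypersurfaces. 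Now $h=|\nabla f|^{-1}\,(\tfrac12 g-Ric)|_{T\Sigma(r)}$, and on the model $\mathbb{R}\times\mathbb{S}^3$ one has $\mathrm{Hess}\,f=\tfrac12\,dt^2$, whose tangential part vanishes, so the level sets there are totally geodesic round spheres; since $(M,g)$ is asymptotic to $\mathbb{R}\times\mathbb{S}^3$, we get $|h|\to 0$ on $\Sigma(r)$ while $\mathrm{Area}(\Sigma(r))$ and the ambient curvature along $\Sigma(r)$ stay bounded, hence $\int_{\Sigma(r)}\mathcal{T}\,dA\to 0$. Therefore $32\pi^2\chi(M)=\int_M(|Rm|^2-4|Ric|^2+R^2)\,dV$, the improper integral being convergent because its left-hand side is finite. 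Using the orthogonal decomposition of $Rm$ in dimension four, $|Rm|^2-4|Ric|^2+R^2=|W|^2+\tfrac23(R^2-3|Ric|^2)$, so the theorem reduces to proving
$$\int_M\bigl(R^2-3|Ric|^2\bigr)\,dV\ \ge\ 0,$$
together with an identification of equality.

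\emph{The crux: pinching the Ricci eigenvalues.} The integrand $R^2-3|Ric|^2=\tfrac14R^2-3|\mathring{Ric}|^2$ vanishes identically on $\mathbb{R}\times\mathbb{S}^3$, where the Ricci eigenvalues are $(0,\tfrac12,\tfrac12,\tfrac12)$; more generally, for $0\le\lambda_1\le\lambda_2\le\lambda_3\le\lambda_4$ an elementary inequality gives $R^2-3\sum_i\lambda_i^2\ge 0$ as soon as the pinching $2(\lambda_1+\lambda_2)\ge\lambda_3+\lambda_4$ (i.e.\ $\lambda_1+\lambda_2\ge\tfrac R3$) holds, and this is sharp on the cylinder. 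The proof therefore controls the eigenvalues of $Ric$ along $M$ by the same mechanism used for Theorems \ref{clasfia} and \ref{sumeig}: one analyses the $f$-Laplacian evolution of the eigenvalues through $\Delta_f R_{ij}=R_{ij}-2R_{ikjl}R_{kl}$ along the integral curves of $\nabla f$, uses $Ric\ge 0$ to sign the cubic curvature reaction terms, uses the sharp bound $R\le 3$ to close the estimates, and uses Munteanu--Wang's structure theorem \cite{[MW19]} to fix the behaviour at spatial infinity, thereby obtaining the pinching needed to make $R^2-3|Ric|^2$ nonnegative and hence $\int_M(R^2-3|Ric|^2)\,dV\ge 0$, so that $\chi(M)\ge 0$. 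This is where essentially all the work lies, and the main obstacle: establishing the sharp Ricci-eigenvalue pinching from $R\le 3$ alone, with the low regularity of the eigenfunctions handled in the viscosity/support-function sense and with the interior estimate correctly matched to the asymptotic values at infinity. (Note that the naive routes --- integrating $R^2-3|Ric|^2$ against $e^{-f}$, or integrating by parts in the unweighted integral via $\Delta_f R$ --- either produce weighted quantities irrelevant to the unweighted Gauss--Bonnet integrand or merely circle back to $\chi(M)$, so the eigenvalue analysis is unavoidable.)

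\emph{The equality case.} If $\chi(M)=0$, then from $32\pi^2\chi(M)=\int_M|W|^2\,dV+\tfrac23\int_M(R^2-3|Ric|^2)\,dV$ and the nonnegativity of both summands we get $\int_M|W|^2\,dV=0$, so $W\equiv 0$ and $(M,g)$ is locally conformally flat; a complete locally conformally flat gradient shrinker with $Ric\ge 0$ is a finite quotient of $\mathbb{S}^4$, $\mathbb{R}^4$, or $\mathbb{R}\times\mathbb{S}^3$, and the only one that is simply connected, noncompact, and asymptotic to $\mathbb{R}\times\mathbb{S}^3$ is $\mathbb{R}\times\mathbb{S}^3$ itself. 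Conversely $\chi(\mathbb{R}\times\mathbb{S}^3)=\chi(\mathbb{R})\chi(\mathbb{S}^3)=0$. (Alternatively, in the equality case one may combine $R^2\equiv 3|Ric|^2$ with $W\equiv 0$ and the soliton equation to force $R$ to be constant and then invoke the classification of four-dimensional shrinkers with constant scalar curvature \cite{Cheng-Zhou,[FR],[PW2]}.)
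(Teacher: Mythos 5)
Your Gauss--Bonnet--Chern reduction and your treatment of the boundary transgression term are in the same spirit as the paper, and your equality-case argument via $W\equiv 0$ and the classification of locally conformally flat shrinkers is a legitimate (arguably cleaner) alternative to the paper's use of the Petersen--Wylie rank splitting. But the crux of your argument is a genuine gap, not a deferred computation. You reduce the theorem to the \emph{pointwise} pinching $\lambda_1+\lambda_2\ge\tfrac R3$ (equivalently $R^2\ge 3|Ric|^2$) and assert it can be established ``by the same mechanism'' as Theorems \ref{clasfia} and \ref{sumeig}. In Theorem \ref{sumeig}, however, that pinching is a \emph{hypothesis}, not a conclusion of the eigenvalue analysis; nothing in the evolution equation $\Delta_f R_{ij}=R_{ij}-2R_{ikjl}R_{kl}$ together with $Ric\ge 0$, $R\le 3$ and cylindrical asymptotics is shown to yield it, and no mechanism for doing so is indicated. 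Worse, if that pointwise pinching were derivable under these hypotheses, the proof of Theorem \ref{sumeig} would immediately force $R\equiv\tfrac32$ and hence $M\cong\mathbb{R}\times\mathbb{S}^3$ outright, making the dichotomy $\chi(M)\ge 0$ with possible strict inequality vacuous --- a strong signal that your intermediate target is not attainable in the stated generality.

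The paper circumvents exactly this obstacle by \emph{not} seeking any pointwise pinching. It integrates the barrier inequality $\Delta_f\lambda_1\le\lambda_1-2\sum_{i\ge 2}K_{1i}\lambda_i$ over $D(a)$, decomposes $K_{1i}$ into Ricci and Weyl parts, bounds $-2\sum W_{1i}\lambda_i$ by Cauchy--Schwarz together with $\sum_i W_{1i}^2\le\tfrac18|W|^2$, and only then invokes Gauss--Bonnet--Chern to trade $\int|W|^2$ for $\chi$ plus $\int(|Ric|^2-\tfrac13R^2)$; the $|Ric|^2-\tfrac13R^2$ contributions cancel exactly against those produced by the Ricci part of the decomposition, leaving $\int_M\lambda_1\le\tfrac43\int_M\lambda_1^2+8\pi^2\chi(M)$ after the boundary terms are killed by the asymptotics (and by Lemma \ref{degen} for the $\int_{\Sigma(a)}|\nabla f|\lambda_1$ term). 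The conclusion then needs only the trivial pointwise bound $\lambda_1\le\tfrac R4\le\tfrac34$, which is where $R\le 3$ actually enters. To repair your proposal you would need to replace your reduction to $\int_M(R^2-3|Ric|^2)\ge 0$ by this coupled eigenvalue--Gauss--Bonnet inequality (or supply an argument for the integral positivity of $R^2-3|Ric|^2$ that does not pass through a pointwise pinching); as written, the central step is missing. A secondary, more minor issue: the convergence of the improper unweighted integral $\int_M(|Rm|^2-4|Ric|^2+R^2)$ does not follow merely from finiteness of $\chi(M)$, since the two summands in your decomposition could a priori diverge with opposite signs on a manifold of infinite volume; this too is resolved in the paper by working with the convergent quantity $\int_M(\lambda_1-\tfrac43\lambda_1^2)$ instead.
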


In Theorem \ref{clasfi0}, $(M, g, f)$ is asymptotic to $\mathbb{R}\times\mathbb{S}^3$
means that the sequence of pointed manifolds $(M,g,x_i)$ subconverges to
$\mathbb{R}\times\mathbb{S}^3$ for any sequence of points $x_i$ going to
infinity along an integral curve of $\nabla f$. The proof of Theorem
\ref{clasfi0} is similar to the proof strategy of Theorem \ref{clasfia}.
The biggest difference is that in the proof of Theorem \ref{clasfi0} we need
to apply the $4$-dimensional Gauss-Bonnet-Chern formula with boundary to control
the Weyl curvature by the Euler characteristic. Moreover, we are able to efficiently
estimate all boundary terms about curvature integrations on a large level set.
We would like to remark that more rigid results related to the Weyl curvature or the Euler
characteristic refer to Corollary \ref{Weylcon} and Theorem \ref{shrcomp}
in Section \ref{classifica}.


\subsection{Rigidity when the asymptotic limit is $\mathbb{R}^2\times\mathbb{S}^2$}

In this subsection we shall classify shrinkers with nonnegative Ricci
curvature when the asymptotic limit is $\mathbb{R}^2\times\mathbb{S}^2$.
\begin{theorem}\label{clasfi}
Let $(M,g, f)$ be a $4$-dimensional simply connected complete noncompact
nonflat shrinker with $Ric\ge0$. If $K\le 1/2$ and $R\leq \frac{3}{2}-\delta$
for some $\delta\in (0, \frac{1}{2}]$, then $(M, g, f)$ must be isometric to
$\mathbb{R}^2\times\mathbb{S}^2$.
\end{theorem}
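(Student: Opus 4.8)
The plan is to mirror the strategy used for Theorem \ref{clasfia}, but now steering the asymptotic analysis toward the model $\mathbb{R}^2\times\mathbb{S}^2$ rather than $\mathbb{R}\times\mathbb{S}^3$. If $(M,g,f)$ is compact, then $Ric\ge 0$ together with the soliton equation forces $Ric>0$ by the strong maximum principle unless the manifold is flat (excluded); a compact shrinker with $Ric\ge 0$ and $R<\tfrac32$ would then contradict the available classification of compact four-dimensional shrinkers (the scalar curvature on a nonflat compact shrinker with nonnegative curvature is too large), so we may assume $(M,g,f)$ is noncompact. By Naber's structure theorem in the improved form of \cite{[MW19]}, along each integral curve of $\nabla f$ going to infinity the pointed manifolds $(M^4,g,x_i)$ converge smoothly to a limit shrinker, and under the hypothesis $Ric\ge 0$ with $R$ bounded (here $R\le \tfrac32-\delta$) the Munteanu--Wang dichotomy \cite{[MW19]} says this limit is either $\mathbb{R}\times\mathbb{S}^3$ (with $R\to\tfrac32$) or a quotient of $\mathbb{R}^2\times\mathbb{S}^2$ (with $R\to 1$). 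The bound $R\le \tfrac32-\delta<\tfrac32$ immediately rules out the first alternative, so every such limit is $\mathbb{R}^2\times\mathbb{S}^2$ (the simply connectedness handles the quotient) and $\lim_{x\to\infty}R=1$.

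The next step is to upgrade this asymptotic information to a rigidity statement. Here the curvature hypothesis $K\le \tfrac12$ (or equivalently nonnegative bi-Ricci curvature) plays the role that $K\le\tfrac14$ played before. On $\mathbb{R}^2\times\mathbb{S}^2$ one has exactly $K\le\tfrac12$, with the flat $\mathbb{R}^2$-directions forcing two zero eigenvalues of the Ricci tensor, so the asymptotic model saturates both the sectional-curvature pinching and the lower end of the Ricci spectrum. I would analyze the $f$-Laplacian evolution equation satisfied by the smallest eigenvalue $\lambda_1$ of $Ric$ (in the barrier/viscosity sense), exactly as in Theorem \ref{clasfia}: combining the soliton identities $\nabla R = 2\,Ric(\nabla f)$ and $\Delta_f R = R - 2|Ric|^2$ with the Bochner-type formula for eigenvalues of $Ric$, one gets a differential inequality of the form $\Delta_f \lambda_1 \ge \lambda_1 - (\text{curvature terms})$, where the curvature terms are controlled using $K\le\tfrac12$ and $Ric\ge 0$. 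Integrating this inequality against the weight $e^{-f}$ over a large sublevel set $\{f\le t\}$ and carefully estimating the boundary integrals over $\{f=t\}$ using the structure of the ends and the asymptotics $R\to 1$, one shows the integrated inequality is forced to be an equality. Tracking the equality case then pins down $\lambda_1\equiv 0$ with a two-dimensional kernel, i.e.\ $Ric$ has rank $2$ outside a compact set and the soliton splits off a flat $\mathbb{R}^2$-factor there.

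Once the shrinker is known to split as $\mathbb{R}^2\times N^2$ outside a compact set, de Rham splitting together with the real analyticity of gradient Ricci solitons (so that a local splitting propagates globally) promotes this to a global splitting $(M,g,f)\cong \mathbb{R}^2\times N^2$; the second factor $N^2$ is then a two-dimensional shrinker with $R\to 1$, hence by Hamilton's classification in dimension two $N^2$ is $\mathbb{S}^2$ with its round soliton structure, yielding $(M,g,f)\cong \mathbb{R}^2\times\mathbb{S}^2$. The main obstacle — and the part requiring the most care — is the second step: establishing that the integrated $f$-Laplacian inequality for $\lambda_1$ is an equality. This demands sharp control of the boundary terms on the level sets $\{f=t\}$ as $t\to\infty$, which in turn relies on the precise decay of the geometry toward the cylindrical-plus-flat model and on the delicate interplay between the pinching $K\le\tfrac12$ and the identity $R\to 1$; getting the error terms to vanish in the limit, rather than merely stay bounded, is the crux of the argument.
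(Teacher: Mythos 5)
Your outline has the right overall shape (rule out the $\mathbb{R}\times\mathbb{S}^3$ limit via $R\le\tfrac32-\delta$, derive an $f$-Laplacian differential inequality for an eigenvalue quantity, integrate against $e^{-f}$ over large sublevel sets, force equality, and conclude by rigidity), but it misses the two ideas that actually make the argument close, and as written the middle step would fail.

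First, you propose to run the argument on $\Delta_f\lambda_1$ alone, ``exactly as in Theorem \ref{clasfia}.'' That does not work here. The inequality of Lemma \ref{pinc1} produces the favorable quadratic term $(R-\tfrac32)(R-\tfrac54)$ precisely because $K\le\tfrac14$ and because the limit has $R\to\tfrac32$; with $K\le\tfrac12$ and $R\to1$ the analogous single-eigenvalue computation does not yield a sign-definite remainder. The paper instead works with the \emph{sum} $\lambda_1+\lambda_2$ (Lemma \ref{pin2}), obtaining
\[
\Delta_f(\lambda_1+\lambda_2)\le\Delta_f R+(\lambda_1+\lambda_2)(R-1)-4\lambda_1\lambda_2 ,
\]
where both non-Laplacian terms are $\le0$ once $R\le1$ outside a compact set (which follows since $R$ increases along integral curves of $\nabla f$ and tends to $1$). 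This is also what ultimately delivers the rank-two conclusion: equality gives $\lambda_1\lambda_2\equiv0$ and $(\lambda_1+\lambda_2)(R-1)\equiv0$, from which $R\equiv1$ and Cheng--Zhou's theorem finish the proof. Tracking only $\lambda_1$ cannot ``pin down a two-dimensional kernel,'' and your claimed inequality $\Delta_f\lambda_1\ge\lambda_1-(\cdots)$ even has the wrong direction (the barrier inequality \eqref{evoRia1} is an upper bound).

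Second, you identify the vanishing of the boundary terms as ``the crux'' but offer no mechanism, and the mechanism from Theorem \ref{clasfia} is not available: there one has the decay $\lambda_1=o(1)/f$ (Lemma \ref{degen}), whereas here one only knows $\lambda_1+\lambda_2\to0$ uniformly (itself a nontrivial claim requiring the classification of $3$-dimensional $\kappa$-noncollapsed ancient solutions to exclude the Bryant soliton and Perelman's ancient solution, not merely Naber's convergence along integral curves). Since the area of $\Sigma(t)$ grows, $\int_{\Sigma(t)}(\lambda_1+\lambda_2)|\nabla f|$ need not tend to zero without quantitative control. The paper handles this by flowing level sets along $\nabla f/|\nabla f|^2$, proving the monotonicity $\tfrac{\partial}{\partial t}\bigl(\tfrac{|\nabla f|}{t}\,dvol_{\Sigma(t)}\bigr)\ge0$, and invoking Zhu's linear volume growth estimate to get $Vol(\Sigma(t_i))\le C\sqrt{t_i}$ along a sequence; this is what produces a good slice $\Sigma(b)$ with $\int_{\Sigma(b)}\langle\nabla(\lambda_1+\lambda_2),\tfrac{\nabla f}{|\nabla f|}\rangle\le0$. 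Without some substitute for these two ingredients your proposal does not constitute a proof.
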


In Theorem \ref{clasfi}, the upper bound of sectional curvature is sharp, since $\mathbb{R}^2\times\mathbb{S}^2$ has constant sectional curvature $1/2$. The
proof strategy of Theorem \ref{clasfi} is similar to the argument of Theorem
\ref{clasfia}. But the most difference is that we consider the $f$-Laplacian
equation of the sum  of the smallest two eigenvalues of Ricci curvature,
which involves delicate analysis of a troublesome boundary term. Luckily,
we apply the level set method to successfully estimate this term by combining
Zhu's volume estimate \cite{[Zhubo]}.

Besides, adopting a similar argument of Theorem \ref{clasfi}, we can apply the nonnegative
bi-Ricci curvature instead of the sectional curvature condition to characterize $\mathbb{R}^2\times\mathbb{S}^2$ for the shrinker. For any pair of unit orthonormal
vectors $u$ and $v$ on $(M,g)$, the bi-Ricci curvature introduced by Shen-Ye \cite{ShYe}
is defined by
\[
BiRic(u, v)=Ric(u, u)+Ric(v, v)-K(u, v),
\]
where $K(u, v)$ is the sectional curvature of plane spanned by $u$ and $v$.
This curvature lies somewhere between Ricci curvature and scalar curvature. From
its definition, the bi-Ricci curvature is the sum of the sectional curvatures
over all mutually orthogonal $2$-planes containing at least one of these tangent
vectors. Hence, the nonnegativity of the sectional curvature implies the
nonnegativity of the bi-Ricci curvature. The inverse is not true.

\begin{theorem}\label{bi-Ricci}
Let $(M,g, f)$ be a $4$-dimensional simply connected complete noncompact
nonflat shrinker with $Ric\ge0$. If the bi-Ricci curvature is nonnegative and
$R\leq \frac{3}{2}-\delta$ for some $\delta\in (0,\frac 12)$, then $(M, g, f)$
must be isometric to $\mathbb{R}^2\times \mathbb{S}^2$.
\end{theorem}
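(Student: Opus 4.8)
The plan is to follow the proof strategy of Theorem \ref{clasfi} almost verbatim, replacing the use of the sectional curvature bound $K\le 1/2$ by the weaker hypothesis $BiRic\ge 0$ wherever the former entered the argument. By the Munteanu--Wang dichotomy and Naber's structure theorem, since the shrinker is noncompact, nonflat and has $R\le\tfrac32-\delta<\tfrac32$, the scalar curvature cannot limit to $\tfrac32$ at infinity; hence along each integral curve of $\nabla f$ the pointed manifolds $(M^4,g,x_i)$ subconverge to a quotient of $\mathbb{R}^2\times\mathbb{S}^2$, and $R\to 1$ at infinity. The goal is then to prove that $R\equiv 1$ outside a compact set, which by real analyticity forces $R\equiv 1$ globally; combined with $Ric\ge 0$, $R\le\tfrac32-\delta$ and the classification of $4$-dimensional constant scalar curvature shrinkers \cite{Cheng-Zhou,[FR],[PW2]}, this yields $(M,g,f)\cong\mathbb{R}^2\times\mathbb{S}^2$.

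First I would set up the $f$-Laplacian evolution equation for $\mu:=\lambda_1+\lambda_2$, the sum of the two smallest eigenvalues of $Ric$. Using the soliton identities ($\Delta_f R=R-2|Ric|^2$, the evolution of $Ric$, and Kato-type inequalities for the ordered eigenvalues), one obtains a differential inequality of the form $\Delta_f\mu\ge \mu - (\text{quadratic curvature terms}) $ in the barrier/viscosity sense, where the quadratic terms are controlled using the relevant curvature hypothesis. This is exactly the point where the paper uses $K\le 1/2$ in Theorem \ref{clasfi}; here I would instead invoke $BiRic\ge 0$, writing the offending curvature terms in the orthonormal Ricci eigenframe $\{e_i\}$ and bounding the sectional curvatures $K(e_i,e_j)$ that appear by $K(e_i,e_j)\le Ric(e_i,e_i)+Ric(e_j,e_j)=\lambda_i+\lambda_j$, which is precisely the content of $BiRic\ge 0$. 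The upshot should be the same sign of the relevant quantity, so that integrating the inequality against the weight $e^{-f}$ over a large sublevel set $\{f\le t\}$ and using Stokes' theorem produces an inequality whose equality case pins down $R$.

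The key technical step, as the author flags for Theorem \ref{clasfi}, is controlling the troublesome boundary term that arises from integration by parts on the level set $\Sigma_t=\{f=t\}$: one gets a boundary integral of the form $\int_{\Sigma_t}\langle\nabla\mu,\nu\rangle\,e^{-f}$ (plus lower-order pieces), and one must show it is negligible along a suitable sequence $t_i\to\infty$. I would handle it exactly as in Theorem \ref{clasfi}: use the level-set/co-area decomposition together with Zhu's volume estimate \cite{[Zhubo]} for $e^{-f}$-weighted volumes of level sets, the soliton identity $|\nabla f|^2=f-R$ (after normalizing so $R+|\nabla f|^2=f$), and the fact that $\mu$ and its gradient are bounded (since $Ric\ge 0$ and $R$ is bounded, and quadratic curvature is controlled by the smooth convergence on the asymptotic region) to conclude the boundary term vanishes in the limit. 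Then the integrated inequality becomes an equality, which forces the pointwise differential inequality to be an equality, and in turn forces $\lambda_1+\lambda_2$, hence $R$, to be locally constant equal to its limiting value $1$ outside a compact set.

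The main obstacle I expect is precisely this boundary-term estimate: unlike the $K\le 1/2$ setting, under $BiRic\ge 0$ alone one has less direct control on individual sectional curvatures, so the pointwise bounds feeding into the level-set estimate must be rederived from $BiRic\ge 0$ together with $Ric\ge 0$ and the smooth asymptotic geometry — in particular one needs that on the end, where $(M,g,x_i)\to\mathbb{R}^2\times\mathbb{S}^2$ smoothly, all curvatures and their covariant derivatives are uniformly bounded, and that $\mu\to 1$ with $|\nabla\mu|\to 0$. A secondary subtlety is the viscosity/barrier justification of the evolution inequality for $\mu$ at points where $\lambda_2=\lambda_3$ (loss of smoothness of the eigenvalue sum); this is standard via Calabi's trick and perturbing to $\lambda_1+\lambda_2+\varepsilon\lambda_3$ or using the fact that $\lambda_1+\lambda_2$ is a concave function of $Ric$, but it must be carried out carefully so that the quadratic terms one discards still have a sign under $BiRic\ge 0$. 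Once these two points are secured, the conclusion follows from the same real-analyticity and constant-scalar-curvature classification input used for Theorems \ref{clasfia} and \ref{clasfi}, with the slightly smaller range $\delta\in(0,\tfrac12)$ reflecting that the strict inequality $R<\tfrac32$ is what is actually needed to rule out the cylindrical limit.
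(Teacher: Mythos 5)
Your proposal follows essentially the same route as the paper: the paper also works with $\mu=\lambda_1+\lambda_2$, substitutes the bi-Ricci bound $K_{12}\le\lambda_1+\lambda_2$ into the evolution inequality \eqref{eigenevo} (the coefficient $2[(\lambda_3+\lambda_4)-(\lambda_1+\lambda_2)]\ge0$ makes this legitimate) to get $\Delta_f\mu\le\Delta_fR+(R-1)(\lambda_3+\lambda_4)-4\lambda_1\lambda_2$, and then reuses Claims 1 and 2 of Theorem \ref{clasfi} (including the Zhu volume estimate for the boundary term) to force equality and hence $R\equiv1$ outside a compact set. The only quibble is a sign slip where you write $\Delta_f\mu\ge\dots$; concavity of the eigenvalue sum gives the inequality in the direction $\le$ in the barrier sense, which is what you in fact use later.
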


To sum up, our proof of all above theorems mainly depends on the delicate asymptotic
analysis of the $f$-Laplacian evolution of eigenvalues of the Ricci curvature.
The argument of integration by part is also often used. In particular, some
technique seems to be first appeared in the Ricci soliton theory, such as the
the $4$-dimensional Gauss-Bonnet-Chern formula with boundary. In last section
of this paper, we observe that our proof strategy is also suitable to Cheng-Zhou's
classification of $4$-dimensional noncompact shrinkers with $R=1$ in \cite{Cheng-Zhou}.

The paper is organized as follows. In Section \ref{pre}, we give some basic
facts and known results often used in the proof of our theorems. In Section
\ref{classifica}, we prove the some rigid results when the asymptotic limit
is $\mathbb{R}\times \mathbb{S}^3$. In particular, we will prove Theorems
\ref{clasfia}, \ref{sumeig} and \ref{clasfi0}. In Section \ref{R2S2}, we discuss
some rigid results when the asymptotic limit is $\mathbb{R}^2\times \mathbb{S}^2$.
We shall prove Theorems  \ref{clasfi} and \ref{bi-Ricci}. In Section \ref{csc},
we give an alternative proof of Cheng-Zhou's result in \cite{Cheng-Zhou} by
the asymptotic analysis via the integration method.



\section{Preliminaries}\label{pre}
In this section, we mainly collect some known facts about shrinkers.
These results will be used in the proof of our theorems. We start to
recall some basic identities of the shrinker, which can be referred
to \cite{[Ha],[Ham]}. From \eqref{Eq1}, we trace it and get
\begin{equation}\label{tra}
R+\Delta f=\tfrac n2.
\end{equation}
From \eqref{Eq1} and \eqref{tra}, by adding a constant to $f$ if necessary,
we get
\begin{equation}\label{equat}
R+|\nabla f|^2=f
\end{equation}
and $\nabla R=2Ric(\nabla f)$. Thus, we have
\begin{equation}\label{feq}
\Delta_f f=\tfrac n2-f,
\end{equation}
where $\Delta_f:=\Delta-\nabla f\cdot\nabla$ is the weighted Laplacian.
Meanwhile, the scalar curvature and the Ricci curvature satisfy
\begin{equation}\label{Sequat}
\Delta_f R= R-2|Ric|^2
\end{equation}
and
\begin{equation}\label{evo}
\Delta_fR_{ij}=R_{ij}-2R_{ikjl}R_{kl},
\end{equation}
where $R_{ij}$ and $R_{ikjl}$ are the Ricci curvature and the Riemannian
curvature of $(M,g,f)$.

For $4$-dimensional shrinker $(M,g,f)$ with $Ric\ge0$, we let $0\le\lambda_1\le\lambda_2\le\lambda_3\le\lambda_4$ be eigenvalues
of Ricci curvature. At a point $x\in M$, let $e_1$ be
an eigenvector with respect to the minimal eigenvalues $\lambda_1$.
Then we extend $e_1$ to an orthonormal basis $\{e_1,e_2,e_3, e_4\}$
such that $\{e_i\}^4_{i=1}$ are the eigenvectors of $Ric$ with respect
to the corresponding eigenvalues $\{\lambda_i\}^4_{i=1}$. From \eqref{evo},
we have
\begin{equation}\label{evoRic0}
\Delta_f \lambda_1\le \lambda_1-2Rm(e_1,e_i,e_1,e_j)Ric(e_i,e_j)
\end{equation}
in the barrier sense. Diagonalizing the Ricci curvature such that
$R_{kl}:=Ric(e_k,e_l)=\lambda_k\delta_{kl}$, then
\[
2Rm(e_1,e_i,e_1,e_j)Ric(e_i,e_j)=2(K_{12}\lambda_2+K_{13}\lambda_3+K_{14}\lambda_4),
\]
where $K_{ij}$ denotes the sectional curvature of the plane spanned by $e_i$
and $e_j$. Thus \eqref{evoRic0} is written as
\begin{equation}\label{evoRia1}
\Delta_f \lambda_1\le \lambda_1-2(K_{12}\lambda_2+K_{13}\lambda_3+K_{14}\lambda_4)
\end{equation}
in the barrier sense. In the following section, we will repeatedly apply
the inequality \eqref{evoRia1} to prove our results.

Besides, the scalar curvature of shrinker $R\ge 0$ (see \cite{[Chen]})
and from \cite{[PiRS]}, $R>0$ unless $(M,g,f)$ is the Gaussian shrinker
$(\mathbb{R}^n, g_E, \frac{|x|^2}{4})$. On non-flat shrinker, Chow-Lu-Yang
\cite{[CLY]} refined scalar curvature estimate to be a sharp form:
\[
R\ge c\,f^{-1}
\]
for some constant $c>0$, and the equality holds at some K\"ahler shrinker
constructed by Feldman-Ilmanen-Knopf \cite{[FIK]}. By \cite{[CZ],[Chetc],[HM]},
the potential function $f$ has the following sharp estimate
\begin{theorem}\label{pote}
Let $(M,g, f)$ be an $n$-dimensional complete noncompact shrinker
with \eqref{Eq1} and \eqref{equat}. Then there exists a point $p\in M $
where $f$ attains its infimum. Moreover,
\[
\tfrac 14\left[\big(r(x)-5n\big)_{+}\right]^2\le f(x)\le\tfrac 14\left(r(x)+\sqrt{2n}\right)^2,
\]
where $r(x)$ is a distance function from $p$
to $x$, and $a_+=\max\{a,\,0\}$ for $a\in \mathbb{R}$.
\end{theorem}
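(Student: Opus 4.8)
The statement is the Cao-Zhou potential estimate \cite{[CZ]}, with the explicit dimensional constants due to Haslhofer-M\"uller \cite{[HM]}; the plan is to reproduce its proof, which splits into an elementary upper estimate and a more delicate lower estimate that simultaneously forces $f$ to be proper, hence to attain its infimum. Since $R\ge 0$ on any shrinker (as recalled above), \eqref{equat} gives $f\ge|\nabla f|^2\ge 0$ and $|\nabla f|^2\le f$, so $\sqrt f$ is $\tfrac12$-Lipschitz; thus for any base point $x_0$ one has $\sqrt{f(x)}\le\sqrt{f(x_0)}+\tfrac12\, d(x,x_0)$. Once a minimum point $p$ is available this yields the upper bound at once: at $p$ we have $\nabla f(p)=0$, so $f(p)=R(p)$ by \eqref{equat}, and $\Delta f(p)\ge 0$ at an interior minimum, so $f(p)=R(p)=\tfrac n2-\Delta f(p)\le\tfrac n2$ by \eqref{tra}; hence $2\sqrt{f(p)}\le\sqrt{2n}$ and $f(x)\le\tfrac14\big(r(x)+\sqrt{2n}\big)^2$.

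For the lower bound (and for the existence of $p$) I would argue along a unit-speed minimal geodesic $\gamma\colon[0,r_0]\to M$ from a point $x_0$, setting $\phi:=f\circ\gamma$. Since $\gamma$ is a geodesic, \eqref{Eq1} gives $\phi''(s)=\tfrac12-Ric(\gamma'(s),\gamma'(s))$, so everything reduces to controlling $\int_0^{r_0}Ric(\gamma',\gamma')\,ds$. This is done via the nonnegativity of the index form of the minimizing geodesic $\gamma$: testing it with $\psi E_i$ over a parallel orthonormal frame $\{E_i\}$ normal to $\gamma'$ and a piecewise-linear trapezoidal cutoff $\psi$ gives $\int_0^{r_0}\psi^2\,Ric(\gamma',\gamma')\,ds\le 2(n-1)$. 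Rewriting the missing ramp contributions via $Ric(\gamma',\gamma')=\tfrac12-\phi''$ and integrating by parts, the ramp at $x_0$ contributes terms bounded by $f$ and $|\nabla f|$ on the unit ball $B(x_0,1)$, while the ramp at $x=\gamma(r_0)$ produces boundary terms in $\phi(r_0)$, $\phi(r_0-1)$, $\phi'(r_0)$, which are absorbed using $|\phi'|\le\sqrt\phi$. Integrating the resulting inequality for $\phi'$ once more along $\gamma$ then produces a quadratic inequality for $\sqrt{f(x)}$ whose solution gives $f(x)\ge\tfrac14\big(d(x,x_0)-c\big)^2$ for $d(x,x_0)$ large, with $c$ depending only on $n$ and the geometry of $B(x_0,1)$. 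In particular $f$ is proper and attains its infimum at some $p$; re-centering the whole computation at $x_0=p$ removes the dependence on local geometry (now $\phi'(0)=0$, $f(p)\le\tfrac n2$, and the upper bound already obtained along $\gamma$ are all dimensional), and careful bookkeeping of the constants upgrades the estimate to $f(x)\ge\tfrac14\big[\big(r(x)-5n\big)_{+}\big]^2$.

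I expect the main obstacle to be precisely this second-variation bookkeeping: (i) the far-endpoint boundary terms are a priori uncontrolled and must be absorbed self-referentially through $|\nabla f|^2\le f$ and the ensuing quadratic inequality in $\sqrt{f(x)}$, and (ii) sharpening the base-point constant to the clean dimensional value $5n$, rather than a constant depending on the geometry near $p$, requires a more careful choice of cutoff together with the a priori upper bound from the first step, which is the refinement carried out in \cite{[HM]}.
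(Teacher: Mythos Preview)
Your proposal is correct and follows the standard argument from the cited literature. Note, however, that the paper does not give its own proof of this theorem: it is stated in the preliminaries section with the attribution ``By \cite{[CZ],[Chetc],[HM]}'' and no proof is supplied. So there is no paper-proof to compare against; your sketch is precisely the Cao--Zhou second-variation argument together with the Haslhofer--M\"uller sharpening, which is exactly what the paper is citing.
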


Combining Theorem \ref{pote} with \eqref{equat}, we see that if $R$ is bounded,
then there exists $a>0$ such that the level set
\[
\Sigma(r):=\{x\in M|f(x)=r\}
\]
of $f$ is a compact manifold for $r\ge a$. Meanwhile, the
domain
\[
D(r):=\{x\in M|f(x)\le r\}
\]
is also a compact manifold with boundary $\Sigma(r)$.

From \cite{[CZ],[MW12]}, we know that the volume of the geodesic is
at least the linear growth and at most the Euclidean growth on an
$n$-dimensional complete noncompact shrinker $(M,g,f)$. That is,
there exist constants $c_1$ and $c_2$ such that
\[
c_1 r\le Vol(B_p(r))\le c_2r^n
\]
for any $r>1$, where $Vol(B_p(r))$ denotes the volume of the geodesic
ball $B_p(r)$ with radius $r$ and center at $p\in M$. Using this and
Theorem \ref{pote}, we have the weighted volume of $M$ is finite,
i.e., $V_f(M):=\int_Me^{-f}dv<\infty$.

It is well-known that the curvature operator on any $3$-dimensional shrinker
must be nonnegative, and hence is bounded by the scalar curvature. For
dimension $4$ and higher, the curvature operator may be has a mixed sign;
see \cite{[FIK]} for the detailed explanation. But for dimension $4$,
Munteanu-Wang \cite{[MWa]} proved that the curvature operator still could be
estimated by the scalar curvature if the scalar curvature is bounded. That
is,
\begin{theorem}\label{RmdyS}
Let $(M, g, f)$ be a $4$-dimensional shrinker with scalar curvature
$S\le c_1$ for some constant $c_1>0$. Then there exists a constant $c>0$
such that
\[
|Rm|\le c\,R
\]
on $(M, g, f)$, where $c$ depends only on $c_1$ and the geometry
of geodesic ball $B_p(r_0)$. Here $p$ is a minimum point of $f$ and
$r_0$ is determined by $c_1$.
\end{theorem}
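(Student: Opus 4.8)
The plan is to reduce the estimate, via the four-dimensional curvature decomposition, to bounds on the traceless Ricci and Weyl parts by $R$, and then to obtain those bounds by combining the elliptic equations of the shrinker with the functional inequalities available for the weighted measure $e^{-f}\,dv$, whose Bakry--\'Emery Ricci tensor equals $\tfrac12 g>0$, together with the properness estimate of Theorem \ref{pote} and the identity $|\nabla f|^2=f-R$ from \eqref{equat}. First I would record the pointwise identity, valid in dimension $4$,
\[
|Rm|^2=|W|^2+2\,|\mathring{Ric}|^2+\tfrac16 R^2 ,
\]
where $W$ is the Weyl tensor and $\mathring{Ric}:=Ric-\tfrac R4 g$, so that it suffices to produce a constant $c$, of the asserted dependence, with $|\mathring{Ric}|\le cR$ and $|W|\le cR$. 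The role of dimension $4$ is exactly here, and in the fact that $W^{\pm}$ exists and that $4$ is the critical Sobolev exponent.

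The next step is a pair of global weighted integral bounds. Since $V_f(M)=\int_M e^{-f}\,dv<\infty$ and $R\le c_1$, integrating \eqref{Sequat} against $e^{-f}\,dv$ --- the integration by parts being justified by Theorem \ref{pote} and the weighted-volume decay of the level sets $\Sigma(r)$ --- gives
\[
\int_M|Ric|^2 e^{-f}\,dv=\tfrac12\int_M R\,e^{-f}\,dv\le\tfrac{c_1}{2}\,V_f(M)<\infty ,
\]
and in particular $\int_M|\mathring{Ric}|^2 e^{-f}\,dv<\infty$. For the Weyl part I would use the soliton elliptic equation for the (anti-)self-dual Weyl tensors, schematically $\Delta_f W^{\pm}=W^{\pm}+W^{\pm}\ast W^{\pm}+W^{\pm}\ast\mathring{Ric}$, obtained by transplanting to the shrinker the reaction terms of the $\Lambda^{\pm}$-blocks of the curvature operator under the Ricci flow; pairing with $W^{\pm}$, integrating by parts on $D(r)$, estimating the boundary integral over $\Sigma(r)$ by Theorem \ref{pote}, and absorbing the cubic term on the complement of a large $D(r)$ via the weighted Sobolev inequality, one should reach $\int_M(|\nabla W|^2+|W|^2)e^{-f}\,dv<\infty$, with a quantitative bound in terms of $c_1$, $V_f(M)$ and the geometry of $B_p(r_0)$. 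Thus $\int_M|Rm|^2 e^{-f}\,dv<\infty$ quantitatively.

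With these in hand I would prove first an a priori pointwise bound $|Rm|\le K$. Writing $u:=|Rm|$, from \eqref{evo} together with $\Delta_f Rm=Rm+Rm\ast Rm$ and Kato's inequality one has, in the barrier sense, $\Delta_f u\ge u-cu^2\ge -cu^2$. A De Giorgi--Nash--Moser iteration with respect to the weighted measure, carried out on the shells $\{r\le f\le r+1\}$ --- where $\int|Rm|^2 e^{-f}\,dv$ is the tail of a convergent integral, hence arbitrarily small for $r$ large, which lets the critical term $u^2$ be absorbed --- together with interior elliptic regularity on $B_p(r_0)$, should yield $\sup_M|Rm|\le K=K(c_1,B_p(r_0))$. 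To upgrade this to $|Rm|\le cR$: once $|Rm|\le K$ the associated self-similar Ricci flow has bounded curvature and is non-collapsed, so $|\nabla Rm|\le cK$ by local elliptic (or Shi) estimates; since $(M,g,f)$ is nonflat one has $R>0$ and in fact $R\ge c_0 f^{-1}$, so the quotients $|\mathring{Ric}|^2/R^2$ and $|W|^2/R^2$ are smooth, and computing $\Delta_f$ of them from \eqref{Sequat}, \eqref{evo} and the Weyl equation produces differential inequalities to which a maximum principle for $\Delta_f$ applies (the weighted manifold being, for this purpose, as good as a compact one), provided one also checks the quotients do not concentrate at infinity. This gives $|\mathring{Ric}|\le cR$ and $|W|\le cR$, and the identity of the first step finishes the proof.

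The hard part will be the a priori bound $\sup_M|Rm|\le K$: the nonlinearity $u^2$ sits exactly at the critical exponent for the $L^2$-integral in dimension $4$, so the Moser iteration must be fed the smallness of $\int_{\{r\le f\le r+1\}}|Rm|^2 e^{-f}\,dv$ and the dependence of every constant on the level $r$ must be tracked with care to land on a bound uniform in $r$. A close second is securing $\int_M|W|^2 e^{-f}\,dv<\infty$ without assuming it, since the boundary-term estimate over $\Sigma(r)$ and the absorption of the cubic term both require Theorem \ref{pote} and the weighted Sobolev inequality to be used quantitatively; and the final passage from $|Rm|\le K$ to $|Rm|\le cR$ still needs the quotient $|Rm|^2/R^2$ to be controlled along the ends, which one must extract from the same elliptic equations once the curvature is known to be bounded.
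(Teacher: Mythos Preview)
The paper does not give its own proof of this statement; Theorem~\ref{RmdyS} is quoted in the preliminaries as a result of Munteanu--Wang \cite{[MWa]}, so there is no in-paper argument to compare against.

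On its own merits, your outline has a genuine circularity at the step where you try to secure $\int_M |W|^2 e^{-f}\,dv<\infty$. Integrating the schematic equation for $W^{\pm}$ over $D(r)$ produces a boundary term involving $|W^{\pm}|\,|\nabla W^{\pm}|$ on $\Sigma(r)$, for which Theorem~\ref{pote} gives no control (it bounds $f$, not curvature), and a cubic term $\int |W^{\pm}|^3 e^{-f}$ whose absorption via the weighted Sobolev inequality requires smallness of some weighted $L^2$-norm of $W$ --- exactly the quantity you are trying to bound. You label this ``a close second'' in difficulty, but the proposal offers no mechanism to break the loop.

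A related problem afflicts the Moser step. Even granting $\int_M|Rm|^2 e^{-f}\,dv<\infty$, the tail condition $\int_{\{r\le f\le r+1\}}|Rm|^2 e^{-f}\,dv\to 0$ only says $\int_{\{r\le f\le r+1\}}|Rm|^2\,dv=o(e^{r})$, since $e^{-f}\sim e^{-r}$ on that shell. This is far too weak to feed a critical-exponent iteration on unit balls centered at points with $f\sim r$, where the relevant volumes are at most polynomial in $r$; the iteration constants cannot be made uniform in $r$ this way. The argument in \cite{[MWa]} sidesteps both issues by exploiting the shrinker identity $\nabla_l R_{ijkl}=R_{ijkl}\nabla_l f$ (a consequence of \eqref{Eq1} and the contracted second Bianchi identity), which injects a large factor $|\nabla f|^2\sim f$ directly into the differential inequalities and permits a maximum-principle argument along level sets without first obtaining global weighted $L^2$ control of $W$.
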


The geometry structure of a shrinker can be studied by the associated Ricci flows
with such shrinker. By this method, Naber \cite{[Na]} applied the singular reduced
length functions of Perelman and successfully obtained some geometry structures at
infinity of a shrinker. In particular, he proved that
\begin{theorem}\label{infi}
Let $(M, g, f)$ be an $n$-dimensional shrinker with bounded curvature
operator. Then for all sequences $x_i\in M$ going to infinity along an
integral curve of $\nabla f$, there exists a subsequence also denoted
by $x_i$, such that $(M,g,x_i)$ smoothly converges to a product manifold
$\mathbb{R}\times N$, where $N$ is an $(n-1)$-dimensional shrinker.
\end{theorem}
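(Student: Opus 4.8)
The plan is to realize $(M,g,f)$ as a self-similar Ricci flow, extract a smooth pointed limit along the $\nabla f$-curve, and then show that this limit isometrically splits off a line whose quotient carries an induced shrinker structure. Write $g(t)=(-t)\,\varphi_t^{*}g$ for $t\in(-\infty,0)$, where $\varphi_t$ is the flow generated by $\tfrac{1}{-t}\nabla f$; this is a complete ancient solution of the Ricci flow which, because the curvature operator of $g$ is bounded, has uniformly bounded curvature on compact time intervals. The associated flow is Perelman $\kappa$-noncollapsed (one uses boundedness of the curvature together with $V_f(M)<\infty$), so there is a uniform lower bound for the injectivity radius of $g$ at each $x_i$. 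By the Hamilton--Cheeger--Gromov compactness theorem, after passing to a subsequence $(M,g,x_i)$ converges in the pointed $C^\infty$ Cheeger--Gromov sense to a complete limit $(M_\infty,g_\infty,x_\infty)$ with bounded curvature.

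Next I would produce the splitting. Along an integral curve $\gamma$ of $\nabla f$ one has $\tfrac{d}{dt}f(\gamma(t))=|\nabla f|^{2}=f-R$, so $f(x_i)\to\infty$ (using also Theorem \ref{pote}), and since $R$ is bounded, $c_i:=|\nabla f|(x_i)=\sqrt{f(x_i)-R(x_i)}\to\infty$. Set $h_i:=(f-f(x_i))/c_i$. From $\nabla|\nabla f|^{2}=2\,\mathrm{Hess}\,f(\nabla f)=\nabla f-2\,Ric(\nabla f)$ and $|Ric|\le|Rm|\le C$ one gets $\bigl|\nabla|\nabla f|\bigr|\le\tfrac{1+2C}{2}$, hence $|\nabla f|/c_i\to 1$ uniformly on balls of any fixed radius about $x_i$; therefore $|\nabla h_i|\to 1$ locally uniformly. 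Also $\mathrm{Hess}\,h_i=(\tfrac12 g-Ric)/c_i\to 0$ locally uniformly (in $C^\infty$, using Shi-type derivative bounds for $Ric$). Passing to the limit, $h_\infty$ is a nonconstant function on $M_\infty$ with $|\nabla h_\infty|\equiv 1$ and $\mathrm{Hess}\,h_\infty\equiv 0$, so $\nabla h_\infty$ is a parallel unit vector field and, by the de Rham splitting theorem, $(M_\infty,g_\infty)$ is isometric to $\mathbb{R}\times N$ with $h_\infty$ the coordinate on the $\mathbb{R}$-factor, where $N$ is a complete $(n-1)$-dimensional manifold.

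It remains to see that $N$ is a shrinker. The Ricci flow through $(M_\infty,g_\infty)$ is a complete ancient solution with bounded curvature; applying Perelman's reduced-volume monotonicity to the self-similar flow (the reduced length based at the cone point is comparable to $f$), the limit is again a gradient shrinking soliton, say with potential $f_\infty$ satisfying $Ric_\infty+\mathrm{Hess}\,f_\infty=\tfrac12 g_\infty$. On a product $\mathbb{R}\times N$ the soliton equation decouples: the $\partial_s\partial_s$-component forces $f_\infty=\tfrac14 s^{2}+a\,s+b(y)$, the mixed $\partial_s\partial_{y_j}$-components force $a$ to be constant (removable by translating $s$), and the $N$-components give $Ric_N+\mathrm{Hess}_N\,b=\tfrac12 g_N$. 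Hence $(N,g_N,b)$ is an $(n-1)$-dimensional shrinker, which completes the proof.

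The main obstacle is step three: the potential $f$ does \emph{not} converge under the natural normalization because $|\nabla f|\to\infty$, so one cannot simply read off the soliton structure of the limit from $f$; establishing that $M_\infty$ is genuinely a shrinker — rather than merely a complete ancient solution that happens to split a line — requires Perelman's reduced geometry for the associated Ricci flow, and this is precisely where the bounded-curvature hypothesis (hence the existence and $\kappa$-noncollapsing of the associated flow) is essential.
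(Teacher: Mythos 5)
This statement is quoted by the paper from Naber's work \cite{[Na]} and is not proved there, so there is no internal proof to compare against; what can be said is that your outline reproduces the standard strategy, and that your steps 1--3 coincide almost verbatim with the argument the authors themselves carry out in Claim 1 of the proof of Theorem \ref{clasfi} (associated self-similar flow, $\kappa$-noncollapsing from \cite{li-wang}, Hamilton--Cheeger--Gromov compactness, and the normalization $f_i=(f-f(x_i))/|\nabla f|(x_i)$ forcing $|\nabla f_\infty|\equiv 1$ and $\mathrm{Hess}\,f_\infty\equiv 0$, hence a splitting $\mathbb{R}\times N$). Those steps are correct, including the gradient bound $\bigl|\nabla|\nabla f|\bigr|\le\tfrac{1+2C}{2}$ and the use of Shi-type estimates to upgrade to $C^\infty_{\mathrm{loc}}$ convergence.

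The genuine gap is your final step. Note that the authors' own version of this argument concludes only that $N$ is an ancient $\kappa$-noncollapsed solution, and they must then invoke the Brendle/Angenent--Brendle--Daskalopoulos--Sesum/Bamler--Kleiner classification to rule out non-soliton possibilities such as Perelman's Type II ancient solution; this shows that ``the limit splits and is ancient'' genuinely does not imply ``the cross-section is a shrinker.'' Your one-sentence appeal to ``Perelman's reduced-volume monotonicity'' is therefore carrying the entire weight of Naber's theorem. To make it a proof one must (i) construct the reduced length of the self-similar flow based at the singular time (the $\mathcal{L}$-geodesics run into the degenerate slice, so this requires Naber's careful limiting construction of a singular reduced length and the verification that it is comparable to $f/(1-t)$), (ii) show that the resulting constant reduced volume passes to the pointed limit along the translated base points $x_i$, and (iii) apply the rigidity case of Perelman's monotonicity formula on the limit flow to extract the potential. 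None of this is formal, and as written your step 4 asserts rather than proves the conclusion. You do correctly identify this as the crux and correctly observe that $f$ itself cannot be used to produce the limit potential because $|\nabla f|(x_i)\to\infty$; the remaining product-decoupling computation for the soliton equation on $\mathbb{R}\times N$ is fine. So the proposal is the right skeleton of Naber's proof, but it is an outline with the hardest step left as a citation-shaped hole.
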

Theorem \ref{infi} will be repeatedly used in the proof of our theorems.
In general, we do not know whether the limit manifold $\mathbb{R}\times N$
depends on the choice of sequence $x_i$. When $N$ is a quotient of the sphere,
Munteanu-Wang \cite{[MW19]} improved Naber's result. They proved that if the
cylinder $\mathbb{R}\times \mathbb{S}^{n-1}/\Gamma$ occurs as a limit for a
sequence of points going to infinity along an end of the shrinker, then the
end is smoothly asymptotic to the same cylinder.

In the end of this section, we recall the Gauss-Bonnet-Chern
formula on $4$-dimensional manifolds with boundary (e.g., \cite{Chern,Zsy}),
which is an important tool in this paper.
\begin{theorem}\label{GBC}
Let $(M, g)$ be a $4$-dimensional manifold with the boundary
$\Sigma$, then
\begin{align}\label{Gauss-Bonnet-Chern}
32\pi^2\chi(M,\Sigma)=\int_M|W|^2+2\left(\int_M (\tfrac{1}{3}R^2-|Ric|^2)
+4\int_{\Sigma}\digamma\right),
\end{align}
where
\[
\digamma=\tfrac{1}{2}RH-R_{11}H-R_{kikj}L^{ij}+\tfrac{1}{3}H^3-H|L|^2+\tfrac{2}{3}tr L^3,
\]
$L$ and $H$ are the second fundamental form and mean curvature of $\Sigma$.
Here the notations $i,j,k =2,3, 4$ denote tangential directions
and $1$ denotes the outward normal direction on $\Sigma$.
\end{theorem}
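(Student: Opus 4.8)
The plan is to obtain \eqref{Gauss-Bonnet-Chern} from the Chern--Gauss--Bonnet theorem for a compact oriented Riemannian $4$-manifold with boundary in its transgression form, and then to match the two resulting integrands with the quantities in \eqref{Gauss-Bonnet-Chern}. Chern's formula reads
\[
\chi(M)=\int_M \mathrm{Pf}(\Omega)+\int_\Sigma T\Phi,
\]
where $\mathrm{Pf}(\Omega)$ is the Pfaffian of the curvature form, i.e. the Gauss--Bonnet integrand, and $T\Phi$ is the transgression $3$-form along $\Sigma$ built from the second fundamental form of $\Sigma$ and the restriction of the ambient curvature to $\Sigma$ relative to an adapted orthonormal frame $\{e_1,e_2,e_3,e_4\}$ with $e_1$ the outward unit normal; see \cite{Chern,Zsy}. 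Since $\Sigma$ is a closed odd-dimensional manifold we have $\chi(\Sigma)=0$ and hence $\chi(M)=\chi(M,\Sigma)$, so it is enough to identify $\int_M\mathrm{Pf}(\Omega)$ and $\int_\Sigma T\Phi$ separately.

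For the interior term I would use the standard form of the Pfaffian in dimension four,
\[
32\pi^2\,\mathrm{Pf}(\Omega)=\bigl(|Rm|^2-4|Ric|^2+R^2\bigr)\,dv_g,
\]
together with the orthogonal decomposition of the curvature tensor in dimension four, which gives $|Rm|^2=|W|^2+2|Ric|^2-\tfrac13R^2$. Substituting, $32\pi^2\,\mathrm{Pf}(\Omega)=\bigl(|W|^2-2|Ric|^2+\tfrac23R^2\bigr)\,dv_g=\bigl(|W|^2+2(\tfrac13R^2-|Ric|^2)\bigr)\,dv_g$, which is precisely the volume integrand appearing on the right-hand side of \eqref{Gauss-Bonnet-Chern}.

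The heart of the argument is the boundary term: one expands $T\Phi$ in the adapted frame above. By definition $T\Phi$ is an antisymmetrized contraction of the connection and curvature forms of $M$ restricted to $\Sigma$; written out, it splits into a piece linear in the tangential components of the ambient curvature paired with $L$, a piece linear in the curvature through $R$ and $R_{11}$, and a purely cubic piece in $L$. Regrouping these with the Gauss equation $R^\Sigma_{ijkl}=R_{ijkl}+L_{ik}L_{jl}-L_{il}L_{jk}$ for tangential indices $i,j,k,l\in\{2,3,4\}$, and carefully tracking the combinatorial constants produced by the Pfaffian transgression, should give $32\pi^2\,T\Phi=8\digamma\,d\sigma$, where
\[
\digamma=\tfrac12 RH-R_{11}H-R_{kikj}L^{ij}+\tfrac13H^3-H|L|^2+\tfrac23\,\mathrm{tr}\,L^3
\]
and $d\sigma$ is the induced volume element on $\Sigma$. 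Combining this with the interior computation and multiplying the Chern formula through by $32\pi^2$ produces \eqref{Gauss-Bonnet-Chern}.

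The main obstacle is exactly this last expansion: the transgression form is defined through an alternating sum over permutations, so getting the overall normalization and all the signs right --- in particular determining which curvature components survive the antisymmetrization --- is delicate and error-prone. To pin the constants down and catch mistakes I would check the claimed $\digamma$ against model cases where both sides of \eqref{Gauss-Bonnet-Chern} can be evaluated independently: the Euclidean ball $B^4$, where $R=Ric=0$ and one must recover $\digamma=\tfrac13H^3-H|L|^2+\tfrac23\,\mathrm{tr}\,L^3$ equal to $2$ on the unit $\mathbb{S}^3$, consistent with $\chi(B^4)=1$; a totally geodesic boundary, where $L=0$ forces $H=0$ and $\digamma=0$, consistent with forming the metric double of $M$ and applying the closed-manifold Gauss--Bonnet formula; and a round hemisphere $\mathbb{S}^4_+$ or a Riemannian product $N^3\times[0,1]$.
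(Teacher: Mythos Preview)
The paper does not prove Theorem~\ref{GBC}; it is stated in the preliminaries as a known result and attributed to \cite{Chern,Zsy}. So there is no ``paper's own proof'' to compare your proposal against. Your outline is the standard derivation: identify the interior Pfaffian as $|W|^2+2(\tfrac13R^2-|Ric|^2)$ via the curvature decomposition in dimension four, and then expand Chern's transgression form on the boundary in an adapted frame to obtain $\digamma$. The interior step is correct as written, and your sanity check on the Euclidean ball is consistent (with $L_{ij}=\delta_{ij}$ one gets $\digamma=2$ on the unit $\mathbb{S}^3$, and $8\cdot 2\cdot \mathrm{Vol}(\mathbb{S}^3)=32\pi^2=32\pi^2\chi(B^4)$). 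The only soft spot is exactly the one you flag: the combinatorics of the transgression form. If you want to turn this into a self-contained proof rather than a citation, you will need to actually carry out that expansion rather than assert its outcome; the model-case checks you list constrain the constants but do not by themselves determine all of them, since several of the six terms in $\digamma$ vanish simultaneously in each test case.
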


\section{Classification when the limit is $\mathbb{R}\times \mathbb{S}^3$}\label{classifica}
In this section, we mainly discuss various classifications of shrinkers
when the limit is $\mathbb{R}\times \mathbb{S}^3$.
We first consider the case of upper sectional curvature, i.e., Theorem \ref{clasfia}.
To prove Theorem \ref{clasfia}, we need the following lemma.
\begin{lemma}\label{pinc1}
Let $(M,g, f)$ be a $4$-dimensional complete noncompact shrinker with
$Ric\ge0$. If $K\le 1/4$, then
\[
\Delta_f \lambda_1\le\tfrac{1}{2}\Delta_f R+(R-\tfrac{3}{2})(R-\tfrac{5}{4})+(4-3R)\lambda_1 +3\lambda_1^2
\]
in the barrier sense.
\end{lemma}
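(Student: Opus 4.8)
The plan is to start from the barrier inequality \eqref{evoRia1},
\[
\Delta_f\lambda_1 \le \lambda_1 - 2(K_{12}\lambda_2 + K_{13}\lambda_3 + K_{14}\lambda_4),
\]
and estimate the right-hand side pointwise, using $\tfrac12\Delta_f R = \tfrac12 R - |Ric|^2$ from \eqref{Sequat}; since everything after \eqref{evoRia1} is purely algebraic, the resulting bound automatically holds in the barrier sense. The only inputs are the elementary relations among the sectional curvatures $K_{ij}$ of the Ricci-eigenvector planes, namely $\lambda_i=\sum_{j\neq i}K_{ij}$ and $K_{ij}\le\tfrac14$. First I would record two consequences of $K\le\tfrac14$: writing $\lambda_j=K_{1j}+K_{kj}+K_{lj}$ with $\{k,l\}=\{2,3,4\}\setminus\{j\}$ and using $K_{kj},K_{lj}\le\tfrac14$ gives $K_{1j}\ge\lambda_j-\tfrac12$ for $j=2,3,4$; and summing the three identities for $\lambda_2,\lambda_3,\lambda_4$ gives $K_{23}+K_{24}+K_{34}=\tfrac12(R-2\lambda_1)$, so $K_{ij}\le\tfrac14$ forces the crucial bound $R\le2\lambda_1+\tfrac32$.

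Next I would write $K_{1j}=\lambda_j-\tfrac12+\mu_j$ with $\mu_j\ge0$; from $\sum_jK_{1j}=\lambda_1$ one gets $\sum_j\mu_j=2\lambda_1-R+\tfrac32=:M$, which is $\ge0$ by the bound above, and a short computation gives $2(K_{12}\lambda_2+K_{13}\lambda_3+K_{14}\lambda_4)=2(|Ric|^2-\lambda_1^2)-(R-\lambda_1)+2\sum_j\mu_j\lambda_j$, which is $\ge 2(|Ric|^2-\lambda_1^2)-(R-\lambda_1)+2\lambda_2M$ since $\lambda_j\ge\lambda_2$ for $j=2,3,4$. Feeding this into \eqref{evoRia1} gives $\Delta_f\lambda_1\le R-2|Ric|^2+2\lambda_1^2-2\lambda_2M$, so, subtracting $\tfrac12\Delta_f R=\tfrac12R-|Ric|^2$, it remains to establish the pointwise inequality
\[
|Ric|^2+\lambda_1^2+2\lambda_2\big(2\lambda_1-R+\tfrac32\big)+\big(R-\tfrac32\big)\big(R-\tfrac54\big)+(4-3R)\lambda_1\ \ge\ \tfrac12R.\qquad(\star)
\]

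To prove $(\star)$ I would bound $|Ric|^2=\lambda_1^2+\lambda_2^2+\lambda_3^2+\lambda_4^2\ge\lambda_1^2+\lambda_2^2+\tfrac12(R-\lambda_1-\lambda_2)^2$, which turns $(\star)$ into a polynomial inequality $\Theta(R,\lambda_1,\lambda_2)\ge0$ for an explicit quadratic $\Theta$. This is the step I expect to be the main obstacle: $\Theta$ is an \emph{indefinite} quadratic form, so the inequality genuinely requires the constraint $R\le2\lambda_1+\tfrac32$ and is not a blind sum of squares. I would view $\Theta$ as a quadratic in $\lambda_2$, with positive leading coefficient $\tfrac32$, on the admissible range $\lambda_1\le\lambda_2\le\tfrac13(R-\lambda_1)$; its vertex $\lambda_2^{\ast}=R-\tfrac53\lambda_1-1$ satisfies $\lambda_2^{\ast}\le\tfrac13(R-\lambda_1)$ precisely because $R\le2\lambda_1+\tfrac32$. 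Hence either $\lambda_2^{\ast}\le\lambda_1$, in which case $\Theta\ge\Theta(R,\lambda_1,\lambda_1)$, which as a quadratic in $R$ has strictly negative discriminant and so is positive; or $\lambda_2^{\ast}$ is admissible, in which case $\Theta$ attains there the minimum $-\tfrac53\lambda_1^2+(\lambda_1-\tfrac14)R-\lambda_1+\tfrac38$, which is affine in $R$ and one checks is $\ge0$ at both ends of the interval $\tfrac83\lambda_1+1<R\le2\lambda_1+\tfrac32$ (the endpoint values being $\lambda_1^2-\tfrac23\lambda_1+\tfrac18>0$ and $\tfrac13\lambda_1^2\ge0$). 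This establishes $(\star)$, hence the lemma; the estimates above are all sharp exactly when $\lambda_1=0$, $\lambda_2=\lambda_3=\lambda_4=\tfrac12$, $R=\tfrac32$, consistent with the rigid model $\mathbb{R}\times\mathbb{S}^3$.
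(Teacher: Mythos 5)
Your proposal is correct; I checked the algebra (the vertex $\lambda_2^{\ast}=R-\tfrac53\lambda_1-1$, the minimum value $-\tfrac53\lambda_1^2+(\lambda_1-\tfrac14)R-\lambda_1+\tfrac38$, the endpoint values $\lambda_1^2-\tfrac23\lambda_1+\tfrac18$ and $\tfrac13\lambda_1^2$, and the discriminant $-5\lambda_1^2+\tfrac72\lambda_1-\tfrac{11}{16}<0$ in the case $\lambda_2^{\ast}\le\lambda_1$) and it all goes through. However, your route is genuinely different from the paper's. The paper also starts from \eqref{evoRia1} and the relations $\lambda_i=\sum_{j\ne i}K_{ij}$, but it isolates the cross term $-2(\lambda_2K_{34}+\lambda_3K_{24}+\lambda_4K_{23})$ and estimates it by completing products around the cylinder values, writing $-2\lambda_2K_{34}=-\tfrac12\lambda_2-K_{34}+\tfrac14-2(\lambda_2-\tfrac12)(K_{34}-\tfrac14)$ and applying $-2ab\le a^2+b^2$, after which the resulting $(\lambda_i-\tfrac12)^2$ terms are absorbed into $|Ric|^2$ and the $(K_{jk}-\tfrac14)^2$ terms are discarded using $K\le\tfrac14$. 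You instead encode the hypothesis through slack variables $\mu_j=K_{1j}-\lambda_j+\tfrac12\ge0$ with $\sum\mu_j=2\lambda_1-R+\tfrac32\ge0$, and reduce to a constrained quadratic inequality in $(R,\lambda_1,\lambda_2)$ settled by locating the vertex. What your version buys is twofold: it makes completely transparent that the only consequences of $K\le\tfrac14$ being used are $K_{1j}\ge\lambda_j-\tfrac12$ and $R\le 2\lambda_1+\tfrac32$, and it avoids any estimate on $K_{jk}^2$ — the paper's step from $\sum_{j<k}(K_{jk}-\tfrac14)^2$ to the constant $\tfrac{15}{8}$ implicitly needs $\sum_{j<k,\ j,k\ge2}K_{jk}^2\le\tfrac{3}{16}$, i.e.\ a two-sided bound $|K_{jk}|\le\tfrac14$, whereas $Ric\ge0$ and $K\le\tfrac14$ only give $K_{jk}\ge-\tfrac12$; your argument is therefore arguably more robust at that point. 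The trade-off is that the paper's computation is shorter and stays closer to the rigid model, while yours requires the explicit case analysis on the quadratic; both are sharp exactly at $\mathbb{R}\times\mathbb{S}^3$.
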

\begin{proof}[Proof of Lemma \ref{pinc1}]
As in Section \ref{pre}, let $\{e_i\}^4_{i=1}$ be an orthonormal basis
such that they are eigenvectors of $Ric$ with respect to the corresponding
eigenvalues $\{\lambda_i\}^4_{i=1}$. We observe that
\begin{equation}
\begin{aligned}\label{eigrelat}
\lambda_1&:=R_{11}=K_{12}+K_{13}+K_{14},\\
\lambda_2&:=R_{22}=K_{21}+K_{23}+K_{24},\\
\lambda_3&:=R_{33}=K_{31}+K_{32}+K_{34},\\
\lambda_4&:=R_{44}=K_{41}+K_{42}+K_{43}.
\end{aligned}
\end{equation}
Applying these identities, \eqref{evoRia1} can be written as
\begin{align*}
\Delta_f\lambda_1&\le\lambda_1-2\lambda_2(\lambda_2-K_{23}-K_{24})
-2\lambda_3(\lambda_3-K_{23}-K_{34})-2\lambda_4(\lambda_4-K_{42}-K_{43})\\
&=\lambda_1-2\sum_{i=2}^4\lambda_i^2+2\lambda_2(K_{23}+K_{24})+2\lambda_3(K_{23}+K_{34})+2\lambda_4(K_{42}+K_{43})\\
&= \lambda_1 -2\sum_{i=2}^4\lambda_i^2 +2\sum^4_{i=2}\lambda_i(K_{23}+K_{24}+K_{34})
-(2\lambda_2K_{34}+2\lambda_3 K_{24}+2\lambda_4 K_{23})\\
&=\lambda_1-2(|Ric|^2-\lambda_1^2)+(R-\lambda_1)(R-2\lambda_1)
-(2\lambda_2 K_{34}+2\lambda_3 K_{24}+2\lambda_4 K_{23})
\end{align*}
in the barrier sense. Below, we will give a delicate upper bound
for the last term on the right hand side of the above inequality.
Indeed, using the following identity
\[
\left(\lambda_2-\tfrac 12\right)\left(K_{34}-\tfrac 14\right)
=\lambda_2 K_{34}-\tfrac 14\lambda_2-\tfrac 12K_{34}+\tfrac 18,
\]
we see that
\begin{align*}
-(2&\lambda_2K_{34}+2\lambda_3 K_{24}+2\lambda_4 K_{23})\\
&=\left[-\tfrac 12\lambda_2-K_{34}+\tfrac{1}{4}-2(\lambda_2-\tfrac{1}{2})( K_{34}-\tfrac{1}{4})\right]\\
&\quad+\left[-\tfrac{1}{2}\lambda_3 -K_{24}+\tfrac{1}{4}
-2(\lambda_3-\tfrac{1}{2})( K_{24}-\tfrac{1}{4})\right]\\
&\quad+\left[-\tfrac{1}{2}\lambda_4-K_{23}+\tfrac{1}{4}
-2(\lambda_4-\tfrac{1}{2})( K_{23}-\tfrac{1}{4})\right]\\
&=-\tfrac{1}{2}(\lambda_2+\lambda_3+\lambda_4)-(K_{23}+K_{24}+K_{34})+\tfrac{3}{4}\\
&\quad-2(\lambda_2-\tfrac{1}{2})( K_{34}-\tfrac{1}{4})-2(\lambda_3-\tfrac{1}{2})( K_{24}-\tfrac{1}{4})
-2(\lambda_4-\tfrac{1}{2})( K_{23}-\tfrac{1}{4})\\
&\le-\tfrac{1}{2}(R-\lambda_1)-\tfrac{1}{2}(R-2\lambda_1)+\tfrac{3}{4}\\
&\quad+\left[(\lambda_2-\tfrac{1}{2})^2+(\lambda_3-\tfrac{1}{2})^2+(\lambda_4-\tfrac{1}{2})^2\right]\\
&\quad+\left[(K_{23}-\tfrac{1}{4})^2+(K_{24}-\tfrac{1}{4})^2+(K_{34}-\tfrac{1}{4})^2\right],
\end{align*}
where we used the Cauchy-Schwarz inequality in the last inequality. Rearranging
the right hand side above and using the curvature assumption $K\le 1/4$, we get
that
\begin{align*}
-(2&\lambda_2K_{34}+2\lambda_3 K_{24}+2\lambda_4 K_{23})\\
&\le-R+\tfrac{3}{2}\lambda_1+\tfrac{3}{4}+(\lambda_2^2+\lambda_3^2+\lambda_4^2)
-(\lambda_2+\lambda_3+\lambda_4)+\tfrac{3}{4}\\
&\quad+(K_{23}^2+K_{24}^2+K_{34}^2)-\tfrac{1}{2}(K_{23}+K_{24}+K_{34})+\tfrac{3}{16}\\
&\le-R+\tfrac{3}{2}\lambda_1+\tfrac{15}{8}+(|Ric|^2-\lambda_1^2)-(R-\lambda_1)-\tfrac{1}{4}(R-2\lambda_1)\\
&=-\tfrac{9}{4}R+3\lambda_1+\tfrac{15}{8}+|Ric|^2-\lambda_1^2.
\end{align*}
Using this upper bound, we have that
\begin{equation}
\begin{aligned}\label{bound1}
\Delta_f \lambda_1&\le \lambda_1-2(|Ric|^2-\lambda_1^2)+(R-\lambda_1)(R-2\lambda_1)\\
&\quad-\tfrac{9}{4}R+3\lambda_1+\tfrac{15}{8}+|Ric|^2-\lambda_1^2\\
&=(\tfrac{1}{2}R-|Ric|^2)+(R^2-\tfrac{11}{4}R+\tfrac{15}{8})+(4-3R)\lambda_1 +3\lambda_1^2\\
&=\tfrac{1}{2}\Delta_f R+(R-\tfrac{3}{2})(R-\tfrac{5}{4})+(4-3R)\lambda_1 +3\lambda_1^2
\end{aligned}
\end{equation}
in the barrier sense. This completes the proof.
\end{proof}

We also need the following degenerate result for the minimal
eigenvalue of the Ricci curvature. We remark that the minimal eigenvalue
of Ricci curvature tends to zero uniformly at infinity was proved
(see Claim 4.1 in \cite{[WW]}). Here we strengthen the previous result.
\begin{lemma}\label{degen}
Let $(M,g,f)$ be a $4$-dimensional complete noncompact shrinker with $Ric\ge0$.
If $(M,g)$ is asymptotic to $\mathbb{R}\times \mathbb{S}^3$, then
\[
\lambda_1(x)=\frac{o(1)}{f(x)}.
\]
\end{lemma}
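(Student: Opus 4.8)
The plan is to exploit the asymptotic hypothesis together with the scalar curvature estimate $R \ge c f^{-1}$ of Chow--Lu--Yang and the $f$-Laplacian equation for $\lambda_1$. First I would observe that since $(M,g)$ is asymptotic to $\mathbb{R}\times\mathbb{S}^3$, by the improved Naber structure theorem of Munteanu--Wang the scalar curvature satisfies $R(x)\to 3/2$ as $x\to\infty$; in particular $R$ is bounded, so by Theorem \ref{RmdyS} the full curvature tensor is bounded, $|Rm|\le cR$, and the level sets $\Sigma(r)$ are compact for $r$ large. On the limit cylinder $\mathbb{R}\times\mathbb{S}^3$ the Ricci eigenvalues are $(0,1/2,1/2,1/2)$, so smooth convergence already forces $\lambda_1(x)\to 0$ as $x\to\infty$ (this is the known Claim~4.1 of \cite{[WW]}); the content of the lemma is the \emph{quantitative} decay rate $\lambda_1(x)f(x)\to 0$.

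The heart of the argument is the differential inequality \eqref{evoRia1},
\[
\Delta_f\lambda_1 \le \lambda_1 - 2(K_{12}\lambda_2+K_{13}\lambda_3+K_{14}\lambda_4),
\]
valid in the barrier sense. Since $|Rm|\le cR$ and $R\to 3/2$, the curvature terms $K_{1i}$ are bounded, and since $\lambda_2,\lambda_3,\lambda_4 \to 1/2 > 0$ while $\lambda_1\to 0$, I expect the key point to be that the quantity $2(K_{12}\lambda_2+K_{13}\lambda_3+K_{14}\lambda_4)$ is, asymptotically, at least $(1+\epsilon)\lambda_1$ for some fixed $\epsilon>0$ off a compact set — intuitively because on the cylinder the relevant sectional curvatures $K_{1i}$ involving the $\mathbb{R}$-direction vanish, but the coefficient structure forces a strict gain once one accounts for $\lambda_2+\lambda_3+\lambda_4 = R-\lambda_1 \to 3/2$. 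Concretely I would revisit the computation in Lemma \ref{pinc1}'s proof, which yields (without the $K\le 1/4$ hypothesis, just from $Ric\ge 0$ and Cauchy--Schwarz on the cross terms) an estimate of the form
\[
\Delta_f\lambda_1 \le -\mu\,\lambda_1 + o(1)\cdot\lambda_1 + (\text{terms that are }o(1)\text{ as }x\to\infty),
\]
for some $\mu>0$; more precisely one wants to extract from \eqref{evoRia1} a bound $\Delta_f\lambda_1 \le -\mu\lambda_1 + \psi$ where $\psi(x)\to 0$. Then one compares with the equation \eqref{feq}, $\Delta_f f = 2 - f$, i.e.\ $\Delta_f(f^{-1})$ behaves like $f^{-1}(1 - 2f^{-1}) + 2|\nabla f|^2 f^{-3} \sim f^{-1}$ for large $f$, so $f^{-1}$ is (asymptotically) a subsolution of $\Delta_f u \ge -u$. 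The strategy is a maximum-principle/barrier comparison: for any $\eta>0$ the function $\lambda_1 - \eta f^{-1}$ should satisfy $\Delta_f(\lambda_1 - \eta f^{-1}) \le -\mu\lambda_1 + \eta\mu' f^{-1} + \psi$ on the complement of a compact set, and since $\lambda_1 \to 0$ while $\eta f^{-1}$ also $\to 0$, one applies the weighted maximum principle on $M\setminus D(r)$ (using finiteness of the weighted volume and the compactness of level sets) to conclude $\lambda_1 \le \eta f^{-1}$ outside a compact set depending on $\eta$. Letting $\eta\to 0$ gives $\lambda_1 = o(1)/f$.

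The main obstacle, I expect, is making the coefficient gain $\mu>0$ genuinely uniform and quantitative near infinity: \eqref{evoRia1} only gives $\Delta_f\lambda_1\le\lambda_1 - (\text{quadratic/curvature terms})$, and one must argue that the subtracted term dominates $\lambda_1$ by a definite margin using only $|Rm|\le cR$, $R\to 3/2$, and $\lambda_i\to 1/2$ for $i\ge 2$ — handling the barrier-sense inequality and the non-smoothness of $\lambda_1$ carefully, and ensuring the comparison function $f^{-1}$ is an honest supersolution of the linearized operator up to lower-order error. A secondary technical point is justifying the maximum principle on the noncompact region $M\setminus D(r)$; here one uses that $\lambda_1\to 0$ at infinity together with a standard cutoff argument against $e^{-f}$, which is integrable by the finiteness of $V_f(M)$ recorded in Section \ref{pre}.
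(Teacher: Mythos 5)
Your route is very different from the paper's, and as written it has genuine gaps. The paper's proof is essentially a two-line computation that requires no PDE comparison at all: from $\nabla R=2Ric(\nabla f)$ and \eqref{Sequat} one writes
\[
2Ric(\nabla f,\nabla f)=\Delta R-\Delta_f R=\Delta R-(R-2|Ric|^2),
\]
and smooth convergence to $\mathbb{R}\times\mathbb{S}^3$ (where $R\equiv 3/2$ and $2|Ric|^2=R$) forces both terms on the right to be $o(1)$. Dividing by $|\nabla f|^2=f-R\sim f$ gives $Ric(\tfrac{\nabla f}{|\nabla f|},\tfrac{\nabla f}{|\nabla f|})=o(1)/f$, and the lemma follows from $\lambda_1\le Ric(e,e)$ applied to the unit vector $e=\nabla f/|\nabla f|$. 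In other words, the $1/f$ rate comes for free from testing $Ric$ against the gradient direction; no evolution equation for $\lambda_1$ is needed.

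Your proposed comparison argument has two concrete problems. First, the inequality \eqref{evoRia1} is an \emph{upper} bound on $\Delta_f\lambda_1$ (because $\lambda_1$ is a minimum of smooth quantities), whereas the maximum principle you invoke for $w=\lambda_1-\eta f^{-1}$ requires a \emph{lower} bound of the form $\Delta_f w\ge \mu w$ on $\{w>0\}$: at a positive interior maximum one only knows $\Delta_f w\le 0$, which is perfectly consistent with $\Delta_f w\le -\mu\lambda_1+\psi$, so no contradiction arises. Second, even granting the structure, the coefficient gain $\mu>0$ is not justified and is dubious: on the limit cylinder the sectional curvatures $K_{1i}$ in the $\mathbb{R}$-direction vanish, and with only $Ric\ge 0$ they may even be negative near infinity, so there is no reason $2\sum_{i\ge 2}K_{1i}\lambda_i\ge(1+\epsilon)\lambda_1$; moreover your error term $\psi$ would need to decay like $o(1)/f$ (not merely $o(1)$) to conclude $\lambda_1=o(1)/f$ from a comparison with $\eta f^{-1}$. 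You correctly flagged the first of these as the main obstacle, but it is not a technicality — it is where the argument breaks.
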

\begin{proof}[Proof of Lemma \ref{degen}]
Using $\nabla R=2Ric(\nabla f)$, we have
\begin{align*}
R-2|Ric|^2&=\Delta_fR\\
&=\Delta R-\langle\nabla f,\nabla R\rangle\\
&=\Delta R-2Ric(\nabla f,\nabla f).
\end{align*}
Since the shrinker is asymptotic to $\mathbb{R}\times \mathbb{S}^3$, we deduce that
\[
R-2|Ric|^2=o(1) \quad \text{and} \quad \Delta R=o(1).
\]
Then $Ric(\nabla f,\nabla f)=o(1)$. This implies that
\[
Ric(\tfrac{\nabla f}{|\nabla f|},\tfrac{\nabla f}{|\nabla f|})=\frac{o(1)}{|\nabla f|^2}
=\frac{o(1)}{f-R}.
\]
This together with the fact $\lambda_1\le Ric(\tfrac{\nabla f}{|\nabla f|},\tfrac{\nabla f}{|\nabla f|})$
yields the conclusion because $R(x)$ tends to a constant as $x\to\infty$.
\end{proof}

Using Lemmas \ref{pinc1} and \ref{degen}, we shall prove Theorem \ref{clasfia}
in introduction.
\begin{proof}[Proof of Theorem \ref{clasfia}]
If $(M,g,f)$ is complete noncompact and curvature operator is bounded,
by Theorem \ref{infi}, for any sequence of points $x_i\in M$ going
to infinity along an integral curve of $\nabla f$, the pointed manifolds
$(M,g,x_i)$ smoothly converge to $\mathbb{R}\times N^3$, where $N^3$
is another complete shrinker. According to the classification of
a $3$-dimensional shrinker (see \cite{[CCZ]}), $N^3$ must be $\mathbb{R}^3$,
$\mathbb{R}\times\mathbb{S}^2$ or $\mathbb{S}^3$.

When $N=\mathbb{R}^3$, we can conclude that $M=\mathbb{R}^4$. Since $Ric\ge0$,
then $R$ increases along each integral curve of $\nabla f$ outside
a compact set of $M$ (see \cite{[WW]}). Indeed, since $R+|\nabla f|^2=f$,
we may let $R_1>0$ sufficiently large such that $|\nabla f|\ne 0$ on
$M\setminus D(R_1)$. Considering an integral curve
$\gamma_{y_0}(t)$, where $t\ge 0$, to $\nabla f$ with $\gamma_{y_0}(0)=y_0$,
where $y_0\in \Sigma(R_1)$, we apply $\nabla R=2Ric(\nabla f)$
and $Ric\ge0$ to conclude that
\[
\frac{dR}{dt}(\gamma_{y_0}(t))=\langle\nabla R,\nabla f\rangle=2Ric(\nabla f,\nabla f)\ge0
\]
on $M\setminus D(R_1)$. This implies that $R$
increases along the integral curve of $\nabla f$ outside a compact set.
So $R\equiv 0$ on the whole $M$ due to $R=0$ on $\mathbb{R}\times N^3$
and the analyticity of the Ricci soliton \cite{[Kot]}. Thus $M=\mathbb{R}^4$
by \cite{[PiRS]}, which contradicts our nonflat assumption.

When $N^3=\mathbb{R}\times\mathbb{S}^2$, we see that
$\lambda_1=\lambda_2=0$, $\lambda_3=\lambda_4=1/2$ and
$K_{34}=1/2$ on $\mathbb{R}\times N^3$, which contradicts
our curvature assumption $K\le 1/4$ and hence it is impossible.

When $N^3=\mathbb{S}^3$, we see that $R=3/2$ on $\mathbb{R}\times\mathbb{S}^3$
and hence there exists a large number $a>0$ such that
\[
R-\tfrac{5}{4}>0,\quad  4-3 R<-\tfrac{1}{4}\quad
\mathrm{and} \quad \lambda_1<\tfrac{1}{100}
 \]
on $M\setminus D(a)$. Combining these
with Lemma \ref{pinc1}, we have
\begin{equation}\label{evoe}
\Delta_f \lambda_1\le\tfrac 12\Delta_f R+(R-\tfrac{3}{2})(R-\tfrac{5}{4})
\end{equation}
on $M\setminus D(a)$ in the barrier sense. Since $\lambda_1$ is a continuous and
Lipschitz function, by Proposition \ref{keyprop} in Appendix, we have
\begin{equation}\label{evoeinte}
-\int_{\Sigma(a)}\langle\nabla\lambda_1,\tfrac{\nabla f}{|\nabla f|}\rangle e^{-f}
\le \int_{M\setminus D(a)}\left[\tfrac12\Delta_f R+(R-\tfrac{3}{2})(R-\tfrac{5}{4})\right]e^{-f}.
\end{equation}

In the following, we \emph{assert} that there exists a large number $\overline{a}\ge a$ such that
\begin{equation}\label{upesinte}
\int_{\Sigma(\overline{a})}\langle\nabla\lambda_1,\tfrac{\nabla f}{|\nabla f|}\rangle e^{-f}\le 0.
\end{equation}
Indeed, we observe that, for any $b>a$, the divergence theorem gives that
\begin{equation}
\begin{aligned}\label{boun2}
\int_{D(a, b)}\langle\nabla\lambda_1,\nabla f\rangle e^{-f}
&=\int_{\Sigma(b)}\lambda_1|\nabla f|e^{-f}-\int_{\Sigma(a)}\lambda_1|\nabla f|e^{-f}
-\int_{D(a, b)}\lambda_1\Delta fe^{-f}\\
&=\int_{\Sigma(b)}\lambda_1|\nabla f|e^{-f}-\int_{\Sigma(a)}\lambda_1|\nabla f|e^{-f}
-\int_{D(a, b)}\lambda_1(2-R)e^{-f},
\end{aligned}
\end{equation}
where $D(a,b):=\{x\in M|a\le f(x)\le b\}$. From Lemma \ref{degen}, we see that
$\lambda_1=\frac{o(1)}{f}$ and hence
\[
\int_{\Sigma(b)}\lambda_1|\nabla f|e^{-f}\to 0
\]
as $b\rightarrow \infty$. Meanwhile, we clearly have
\[
-\int_{\Sigma(a)}\lambda_1|\nabla f|e^{-f}\le0.
\]
Moreover, since $2-R>0$ on $M\setminus D(a)$, then we have
\[
-\int_{D(a, b)}\lambda_1(2-R)e^{-f}\le 0.
\]
Therefore, the right hand side of \eqref{boun2} is always
non-positive as $b\rightarrow \infty$. Thus,
\[
\Lambda:=\lim_{b\rightarrow \infty}
\int_{D(a, b)}\langle \nabla \lambda_1, \nabla f\rangle e^{-f}\le 0.
\]

When $\Lambda=0$, then $\lambda_1\equiv 0$ on $D(a)$. In this case we
easily have \eqref{upesinte} by letting $\overline{a}=a$.
When $\Lambda<0$, there exists $b_1$ such that
\[
\int_{D(a,b_1)}\langle\nabla \lambda_1,\nabla f\rangle e^{-f}<0.
\]
By the co-area formula, the above inequality implies that
\begin{align*}
\int_{D(a, b_1)}\langle\nabla\lambda_1,\nabla f\rangle e^{-f}
=\int_a^{b_1}\int_{\Sigma(s)}\langle\nabla\lambda_1,\tfrac{\nabla f}{|\nabla f|}\rangle e^{-f}ds.
\end{align*}
By the mean value theorem, there exists $\overline{a}\in[a,b_1]$ such that
\eqref{upesinte} holds. Therefore we prove the assertion.

Now, combining this assertion and \eqref{evoeinte}, we obtain that

\begin{align*}
0&\le-\int_{\Sigma(\overline{a})}\langle\nabla\lambda_1,\tfrac{\nabla f}{|\nabla f|}\rangle e^{-f}\\
&\le\int_{M\setminus D(\overline{a})}\left[\tfrac{1}{2}\Delta_f R+(R-\tfrac{3}{2})(R-\tfrac{5}{4})\right]e^{-f}\\
&\le-\int_{\Sigma(\overline{a})}\tfrac{1}{2}\langle\nabla R, \tfrac{\nabla f}{|\nabla f|}\rangle e^{-f}\\
&=-\int_{\Sigma(\overline{a})} Ric(\nabla f,\tfrac{\nabla f}{|\nabla f|})e^{-f}\\
&\le 0.
\end{align*}
Equality implies that $R=3/2$ on $M\setminus D(\overline{a})$. Thus
$R=3/2$ on $M$ due to the real analyticity of the Ricci
soliton. By \cite{[FR]}, we know that $(M, g)$ is
isometric to $\mathbb{R}\times\mathbb{S}^3$.
\end{proof}

Secondly, we shall prove Theorem \ref{sumeig}. That is, for the complete
noncompact shrinker with $Ric\ge 0$ and bounded $R$, we shall apply
the curvature pinching condition $\lambda_1+\lambda_2\ge\tfrac{R}{3}$
to characterize $\mathbb{R}\times \mathbb{S}^3$.

\begin{proof}[Proof of Theorem \ref{sumeig}]
The proof is similar to the argument of Theorem \ref{clasfia}. Here we
sketch its proof. By \eqref{evoRia1} and \eqref{eigrelat}, we have
\begin{align*}
\Delta_f \lambda_1 &\leq \lambda_1 -2\lambda_2(\lambda_2-K_{23}-K_{24})-2\lambda_3(\lambda_3-K_{23}-K_{34})-2\lambda_4(\lambda_4-K_{42}-K_{43})\\
&=\lambda_1-2\sum^4_{i=2}\lambda_i^2 +2\lambda_2(K_{23}+K_{24})+2\lambda_3(K_{23}+K_{34})+2\lambda_4(K_{42}+K_{43})\\
&= \lambda_1 -2\sum^4_{i=2}\lambda_i^2 +2\sum^4_{i=2}\lambda_i(K_{23}+K_{24}+K_{34})-2(\lambda_2K_{34}+\lambda_3K_{24}+\lambda_4K_{23})\\
&\le \lambda_1-2(|Ric|^2-\lambda_1^2)+(R-\lambda_1)(R-2\lambda_1)-2\lambda_2(K_{23}+K_{24}+K_{34})\\
&=\lambda_1-2(|Ric|^2-\lambda_1^2)+(R-\lambda_1)(R-2\lambda_1)-\lambda_2(R-2\lambda_1)\\
&=R^2-2|Ric|^2-R \lambda_2+\lambda_1(1-3 R+2 \lambda_2)+4\lambda_1^2\\
&=R-2|Ric|^2+R(R-1-\lambda_2)+\lambda_1(1-3 R+2 \lambda_2)+4\lambda_1^2\\
&=\Delta_f R+R(R-1-\lambda_2-\lambda_1)+\lambda_1(1-3 R+3 \lambda_2)+4\lambda_1^2
\end{align*}
in the barrier sense, where we used $\lambda_3\ge\lambda_2$ and
$\lambda_4\ge\lambda_2$ in the above second inequality. Since
$\lambda_1+\lambda_2\ge \tfrac{R}{3}$, then
\[
\Delta_f \lambda_1\le\Delta_f R+R\left(\tfrac{2}{3}R-1\right)
+\lambda_1(1-3R+3 \lambda_2)+4\lambda_1^2
\]
in the barrier sense. Moreover, by Theorem \ref{infi}, the assumption
$\lambda_1+\lambda_2\ge \tfrac{R}{3}$ and bounded $R$ also imply that
$(M, g, f)$ must smoothly converge to $\mathbb{R}\times \mathbb{S}^3$
($\mathbb{R}^2\times \mathbb{S}^2$ is impossible).
It indicates that there exists a large number $a$ such that $\lambda_2-\tfrac{1}{2}$,
$R-\tfrac{3}{2}$ and $\lambda_1$ are sufficiently small on $M\setminus D(a)$, and
also
\[
\lambda_1(1-3R+3 \lambda_2)+4\lambda_1^2\le 0
\]
on $M\setminus D(a)$. Therefore,
\[
\Delta_f\lambda_1\le \Delta_f R+R(\tfrac{2}{3}R-1)
\]
in the barrier sense. Similar to the argument of Theorem \ref{clasfia}, there
exists $\overline{a}\ge a$ such that the integration of the above inequality
over $M\setminus D(\overline{a})$ satisfies
\begin{align*}
0&\le -\int_{\Sigma(\overline{a})}\langle\nabla\lambda_1,\tfrac{\nabla f}{|\nabla f|}\rangle e^{-f}\\
&\le\int_{M\setminus D(\overline{a})}\Delta_f R e^{-f}+\int_{M\setminus D(\overline{a})}R(\tfrac{2}{3}R-1)e^{-f}\\
&=-\int_{\Sigma(\overline{a})}\langle\nabla R, \tfrac{\nabla f}{|\nabla f|}\rangle e^{-f}\\
&=-\int_{\Sigma(\overline{a})} 2Ric(\nabla f,\tfrac{\nabla f}{|\nabla f|})e^{-f}\\
&\le 0.
\end{align*}
This implies $R=3/2$ on $M\setminus D(\overline{a})$ and hence
$(M, g, f)$ is isometric to $\mathbb{R}\times \mathbb{S}^3$
by following the same argument of Theorem \ref{clasfia}.
\end{proof}

Thirdly, we shall prove Theorem \ref{clasfi0} in introduction. That is,
we can apply the Euler number of shrinker to characterize
$\mathbb{R}\times\mathbb{S}^3$. We begin with a simple lemma,
which is well known for the 4-dimensional manifold. Use the
preceding notations and let $W_{ij}=W(e_i, e_j, e_i, e_j)$, where
$W$ denotes the Weyl curvature. Then we have
\begin{lemma}\label{eigineq}
On a $4$-dimensional manifold, we have
\begin{align}
W_{12}^2+W_{13}^2+W_{14}^2 \leq \tfrac{1}{8} |W|^2.
\end{align}
\end{lemma}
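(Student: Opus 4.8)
The plan is to reduce everything to the \emph{diagonal} components $W_{ij}=W_{ijij}$ of the Weyl tensor, and to exploit that $W$ is totally trace-free together with a purely four-dimensional identity.

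First I would use that $W$ has vanishing Ricci contraction: for each fixed $i$, $\sum_{j=1}^4 W_{ijij}=0$, and since $W_{iiii}=0$ this reads $W_{12}+W_{13}+W_{14}=0$ together with the three analogous relations obtained by taking $i=2,3,4$. Solving this small linear system — subtract the $i=1$ equation from the $i=2$ equation and the $i=3$ equation from the $i=4$ equation, then add and subtract the results — yields the classical four-dimensional identities $W_{12}=W_{34}$, $W_{13}=W_{24}$, $W_{14}=W_{23}$. Hence $\sum_{p<q}W_{pq}^2 = 2\bigl(W_{12}^2+W_{13}^2+W_{14}^2\bigr)$.

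Next I would expand $|W|^2=\sum_{i,j,k,l=1}^4 W_{ijkl}^2$ and, for each unordered pair $\{p,q\}$, single out the four ordered quadruples $(i,j,k,l)$ with $\{i,j\}=\{k,l\}=\{p,q\}$; by the antisymmetry of $W$ in its first two and last two slots, each of these four terms equals $W_{pq}^2$, so together they contribute exactly $4W_{pq}^2$, and the families corresponding to distinct pairs $\{p,q\}$ are pairwise disjoint. Since every remaining term in the sum is a square and thus nonnegative, this gives $|W|^2 \ge 4\sum_{p<q}W_{pq}^2$. Combining with the identity from the previous step produces $|W|^2 \ge 8\bigl(W_{12}^2+W_{13}^2+W_{14}^2\bigr)$, which is the assertion.

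The only place requiring care is the bookkeeping in the expansion of $|W|^2$: one must check that the diagonal blocks are counted with the correct multiplicity $4$ and with no overlap between different pairs; everything else is elementary linear algebra. It is worth emphasizing that the specifically four-dimensional identities $W_{12}=W_{34}$, $W_{13}=W_{24}$, $W_{14}=W_{23}$ are essential here — the trace-free relation $W_{12}+W_{13}+W_{14}=0$ by itself would only yield the weaker bound $|W|^2\ge 4\bigl(W_{12}^2+W_{13}^2+W_{14}^2\bigr)$.
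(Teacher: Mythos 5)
Your proof is correct and follows essentially the same route as the paper: the trace-free relations give $W_{12}=W_{34}$, $W_{13}=W_{24}$, $W_{14}=W_{23}$, and then $|W|^2$ is bounded below by the diagonal components $W_{pqpq}$ counted with multiplicity $4$. Your multiplicity bookkeeping in the expansion of $|W|^2$ is in fact done more carefully than in the paper's own write-up, which compresses this step into a single (slightly loosely written) chain of inequalities.
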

\begin{proof}[Proof of Lemma \ref{eigineq}]
Since the Weyl curvature is traceless, then
\begin{align*}
&W_{12}+W_{13}+W_{14}=0,\\
&W_{12}+W_{23}+W_{24}=0,\\
&W_{13}+W_{23}+W_{34}=0,\\
&W_{14}+W_{24}+W_{34}=0.
\end{align*}
From the above equations, we get $W_{12}=W_{34}$, $W_{13}=W_{24}$, $W_{14}=W_{23}$. Hence,
\begin{align*}
|W|^2\geq \sum_{i,j=1}^4 W_{ij}^2
 &=4\left(W_{12}^2+W_{13}^2+W_{14}^2+W_{23}^2+W_{24}^2+W_{34}^2\right)\\
&=8\left(W_{12}^2+W_{13}^2+W_{14}^2\right).
\end{align*}
This completes the proof.
\end{proof}

Now we can finish the proof of Theorem \ref{clasfi0}.
\begin{proof}[Proof of Theorem \ref{clasfi0}]
Without loss of generality, we may assume that $(M, g)$ is nonflat.
If $(M, g)$ has two ends, then $(M, g)$ splits isometrically as $\mathbb{R}\times N^3$,
where $N^3$ is a 3-dimensional closed shrinker, isometric to  $\mathbb{S}^3$.
If $(M ,g)$ has one end. Recall that
\begin{equation}\label{eigeninequ}
\Delta_f \lambda_1\leq \lambda_1-2\sum_{i=2}^4 K_{1i}\lambda_i
\end{equation}
in the barrier sense. By the curvature decomposition formula
\[
Rm=\tfrac{1}{2}Ric \odot g-\frac{R}{12}g  \odot g+W,
\]
we have
\[
-2\sum_{i=2}^4 K_{1i}\lambda_i=\tfrac{1}{3}R^2-|Ric|^2
-\tfrac{4}{3}R \lambda_1+2 \lambda_1^2-2\sum_{i=2}^4 W_{1i}\lambda_i.
\]
In the following we shall estimate the term $-2 \sum_{i=2}^4 W_{1i}\lambda_i$
in the above equality. Since Weyl curvature is traceless, we have
\begin{align*}
-2\sum_{i=2}^4 W_{1i}\lambda_i&=-2\sum_{i=2}^4 W_{1i}\left(\lambda_i-\tfrac{\lambda_2+\lambda_3+\lambda_4}{3}\right)\\
&\leq \tfrac{1}{2}\sum_{i=2}^4 \left(\lambda_i-\tfrac{\lambda_2+\lambda_3+\lambda_4}{3}\right)^2+2\sum_{i=2}^4W_{1i}^2.
\end{align*}
We observe that
\begin{align*}
\sum_{i=2}^4 \left(\lambda_i-\tfrac{\lambda_2+\lambda_3+\lambda_4}{3}\right)^2
&=\sum_{i=2}^4 \left(\lambda_i^2 -\tfrac{2}{3}(\lambda_2+\lambda_3+\lambda_4)\lambda_i
+\tfrac{1}{9}(\lambda_2+\lambda_3+\lambda_4)^2  \right)\\
&=|Ric|^2-\lambda_1^2-\tfrac{1}{3}(R-\lambda_1)^2
\end{align*}
and
\[
\sum_{i=2}^4W_{1i}^2\leq \frac{1}{8}|W|^2
\]
due to Lemma \ref{eigineq}. Hence, we have
\begin{equation}
\begin{aligned}\label{trace back}
-2\sum_{i=2}^4 K_{1i}\lambda_i\leq &\tfrac{1}{4}|W|^2+\tfrac{1}{2}\left(|Ric|^2-\lambda_1^2-\tfrac{1}{3}(R-\lambda_1)^2\right)\\
&+\tfrac{1}{3}R^2-|Ric|^2-\tfrac{4}{3}R \lambda_1+2\lambda_1^2.
\end{aligned}
\end{equation}
Now integrating \eqref{eigeninequ} over $D(a)$ and combining \eqref{trace back}
yields that
\begin{align*}
\int_{D(a)}\Delta_f \lambda_1&\leq \int_{D(a)}\lambda_1
+\int_{D(a)}\tfrac{1}{2}\left(|Ric|^2-\lambda_1^2
-\tfrac{1}{3}(R-\lambda_1)^2\right)\\
&\quad\quad+ 8\pi^2 \chi(D(a), \Sigma(a))+\tfrac{1}{2}
\int_{D(a)}(|Ric|^2-\tfrac{1}{3}R^2)-2\int_{\Sigma(a)}\digamma\\
&\quad\quad+ \int_{D(a)}\left(\tfrac{1}{3}R^2-|Ric|^2-\tfrac{4}{3}R \lambda_1+2\lambda_1^2\right)\\
&=\int_{D(a)}\left(\lambda_1-R \lambda_1+\tfrac{4}{3}\lambda_1^2\right)
-2\int_{\Sigma(a)}\digamma+8\pi^2 \chi(D(a), \Sigma(a)),
\end{align*}
where we used the Gauss-Bonnet-Chern formula \eqref{Gauss-Bonnet-Chern} in the above first
inequality.

On the other hand, since $\lambda_1$ is a continuous and Lipschitz
function, by Proposition \ref{keyprop2} in Appendix, the above inequality
indeed implies that
\begin{align*}
&\int_{\Sigma(a)}\langle\nabla \lambda_1, \tfrac{\nabla f}{|\nabla f|}\rangle-\int_{\Sigma(a)}|\nabla f| \lambda_1+\int_{D(a)}(2-R)\lambda_1\\
&\le\int_{D(a)}\left(\lambda_1-R \lambda_1+\tfrac{4}{3}\lambda_1^2\right)
-2\int_{\Sigma(a)}\digamma+8\pi^2 \chi(D(a), \Sigma(a))
\end{align*}
holds for almost everywhere sufficiently large $a$. Namely,
\begin{align*}
\int_{D(a)}\lambda_1&\leq -\int_{\Sigma(a)}\langle\nabla \lambda_1, \tfrac{\nabla f}{|\nabla f|}\rangle+\int_{\Sigma(a)}|\nabla f|\lambda_1\\
&\quad+\int_{D(a)}\tfrac{4}{3}\lambda_1^2-2\int_{\Sigma(a)}\digamma+8\pi^2 \chi(D(a), \Sigma(a)).
\end{align*}
Since the shrinker is asymptotic to $\mathbb{R}\times \mathbb{S}^3$, then
$\chi(D(a), \Sigma(a))=\chi(M)$ when $a$ is sufficiently large. We
also see that $\nabla Ric$ and the second fundamental form of $\Sigma(a)$
tends to zero as $a\rightarrow \infty$. So the boundary integral
$\int_{\Sigma(a)}\langle\nabla \lambda_1, \frac{\nabla f}{|\nabla f|}\rangle$
and  $\int_{\Sigma(a)}\digamma$ both tend to zero. Moreover, by Lemma \ref{degen},
$\int_{\Sigma(a)}|\nabla f|\lambda_1$ also tends to zero. Putting these
information together and letting $a\rightarrow \infty$, we finally obtain that
\begin{equation}\label{4chuyi3}
\int_M \lambda_1\leq \tfrac{4}{3}\int_M \lambda_1^2+8\pi^2 \chi(M).
\end{equation}
Since we assume $R\leq 3$ and $Ric\ge0$, then $\lambda_1\leq \frac{3}{4}$, which
implies $\chi(M)\geq 0$ by \eqref{4chuyi3}.

In the rest we prove the rigid part. It is clear that $\mathbb{R}\times\mathbb{S}^3$
implies $\chi(M)=0$. Now if $\chi(M)=0$, from  \eqref{4chuyi3}
and our curvature assumption, we get $\lambda_1=0$ on $M$. Combining this
with \eqref{eigeninequ} gives
\[
-2\sum_{i=2}^4 K_{1i}\lambda_i\geq 0.
\]
On the other hand, inspecting the proof from \eqref{trace back} to
\eqref{4chuyi3}, we also have
\[
\int_M -2\sum_{i=2}^4 K_{1i}\lambda_i \leq 0.
\]
Putting these together, we have
\[
\sum_{i=2}^4 K_{1i}\lambda_i\equiv 0.
\]
It is easy to see that the Ricci curvature has rank $3$ away from a compact set,
then Theorem 4.4 in \cite{[PW]} gives that $(M\setminus D, g)$ is
isometric to $\mathbb{R}\times N^3$, where $D$ is a compact set of $M$,
$N^3$ is a closed 3-dimensional shrinker (in fact is $\mathbb{S}^3/\Gamma$).
Since $\mathbb{R}\times \mathbb{S}^3/\Gamma$ has vanishing Weyl curvature and
the shrinker is analytic, the Weyl curvature vanishes on $M$.
Because $M$ is simply connected, we finally derive that $(M, g)$ is isometric
to $\mathbb{R}\times \mathbb{S}^3$, which contradicts our one end assumption.
\end{proof}

If we give a small upper bound of scalar curvature and a suitable upper bound
for the integral of the norm of Weyl curvature, we can get the following characterization
of $\mathbb{R}\times\mathbb{S}^3$. This result can be proved by using Theorem \ref{clasfi0}.
\begin{corollary}\label{Weylcon}
Let $(M, g, f)$ be a $4$-dimensional simply connected complete noncompact
Ricci shrinker with $Ric\geq 0$. If $R\leq \frac{3}{2}$ and
\begin{align*}
\int_{M}|W|^2 \leq 32\pi^2,
\end{align*}
then $(M^4, g, f)$ must be isometric to $\mathbb{R}\times \mathbb{S}^3$.
\end{corollary}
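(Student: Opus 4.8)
The plan is to derive this corollary from Theorem \ref{clasfi0}: once one knows that $(M,g)$ is asymptotic to $\mathbb{R}\times\mathbb{S}^3$, Theorem \ref{clasfi0} already gives $\chi(M)\ge0$ with equality precisely for the cylinder, so it suffices to (i) identify the asymptotic model as $\mathbb{R}\times\mathbb{S}^3$ and (ii) use the Weyl bound to force $\chi(M)\le0$.

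First I would pin down the asymptotic model. We may assume $(M,g)$ is nonflat (otherwise $M=\mathbb{R}^4$). Since $R\le\tfrac32$ is bounded and $Ric\ge0$, Theorem \ref{RmdyS} gives $|Rm|\le cR$, so the curvature operator is bounded and Theorem \ref{infi} applies: along any integral curve of $\nabla f$, $(M,g,x_i)$ subconverges to $\mathbb{R}\times N^3$ with $N^3$ a $3$-dimensional shrinker, hence $N^3\in\{\mathbb{R}^3,\mathbb{R}\times\mathbb{S}^2,\mathbb{S}^3\}$ up to quotient, with scalar curvature $0$, $1$, $\tfrac32$ respectively. The case $N^3=\mathbb{R}^3$ is impossible: $Ric\ge0$ makes $R$ nondecreasing along integral curves outside a compact set, so $R\to0$ along such a curve would force $R\equiv0$ (by real-analyticity) and hence flatness by \cite{[PiRS]}, a contradiction. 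The case $N^3=\mathbb{R}\times\mathbb{S}^2$ — the $\mathbb{R}^2\times\mathbb{S}^2$ branch of the Munteanu-Wang dichotomy — is ruled out by $\int_M|W|^2\le32\pi^2$: there $|W|$ converges at infinity to the positive constant $|W|_{\mathbb{R}^2\times\mathbb{S}^2}$, so $|W|\ge c_0>0$ on $M\setminus D(a)$ for large $a$, while $Vol(M\setminus D(a))=\infty$ by the linear volume lower bound, whence $\int_M|W|^2=\infty$. Thus $N^3=\mathbb{S}^3$, $\lim_{x\to\infty}R=\tfrac32$, and by the Munteanu-Wang improvement of Naber's theorem $(M,g)$ is smoothly asymptotic to $\mathbb{R}\times\mathbb{S}^3/\Gamma$; simple connectedness forces $\Gamma$ trivial (alternatively this is absorbed into the rigidity step of Theorem \ref{clasfi0}), so $(M,g)$ is asymptotic to $\mathbb{R}\times\mathbb{S}^3$.

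Now Theorem \ref{clasfi0} applies (as $R\le\tfrac32\le3$) and gives $\chi(M)\ge0$, with equality if and only if $(M,g,f)\cong\mathbb{R}\times\mathbb{S}^3$; so it remains to show $\chi(M)\le0$. For this I would rerun the proof of Theorem \ref{clasfi0}: integrate $\Delta_f\lambda_1\le\lambda_1-2\sum_{i=2}^4K_{1i}\lambda_i$ over $D(a)$, insert the curvature decomposition together with Lemma \ref{eigineq} as in \eqref{trace back}, and apply the Gauss-Bonnet-Chern formula \eqref{Gauss-Bonnet-Chern} on $D(a)$ — but this time retaining $\int_{D(a)}|W|^2$ rather than eliminating it. Letting $a\to\infty$, using $\chi(D(a),\Sigma(a))=\chi(M)$, $\int_{\Sigma(a)}\digamma\to0$, and Lemma \ref{degen} to kill the remaining boundary terms, one reaches
\[
32\pi^2\chi(M)=\int_M|W|^2+2\int_M\big(\tfrac13R^2-|Ric|^2\big),
\]
together with \eqref{4chuyi3}. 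Since $R\le\tfrac32$ forces $\lambda_1\le\tfrac{R}{4}\le\tfrac38$ and the asymptotics make $\tfrac13R^2-|Ric|^2=o(1)$ at infinity, the sign information on $\tfrac13R^2-|Ric|^2$ coming from $Ric\ge0$ and $R\le\tfrac32$, combined with \eqref{4chuyi3} and the hypothesis $\int_M|W|^2\le32\pi^2$, should give $\chi(M)\le0$. Hence $\chi(M)=0$ and Theorem \ref{clasfi0} concludes that $(M,g,f)\cong\mathbb{R}\times\mathbb{S}^3$.

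The hard part is this last step. The curvature integral $\int_M(\tfrac13R^2-|Ric|^2)$ is only conditionally convergent — both $\int_MR^2$ and $\int_M|Ric|^2$ grow like $Vol(D(a))\to\infty$ — so one must exploit the cancellation very carefully, presumably through the co-area formula and the decay $\lambda_1=o(1)/f$ of Lemma \ref{degen}, in order to pin its sign and make the constant land exactly at the threshold $32\pi^2$. A second, more minor, subtlety is the exclusion of the $\mathbb{R}^2\times\mathbb{S}^2$ model: the dichotomy only yields subconvergence along integral curves, so one first has to upgrade this to a genuine lower bound $|W|\ge c_0>0$ on a neighborhood of infinity.
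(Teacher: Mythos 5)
Your overall architecture coincides with the paper's: rule out the $\mathbb{R}^2\times\mathbb{S}^2$ asymptotic model via the finiteness of $\int_M|W|^2$, invoke Theorem \ref{clasfi0} for $\chi(M)\ge0$ with rigidity at equality, and use the Gauss--Bonnet--Chern formula with boundary to force $\chi(M)\le0$. But the step you yourself flag as "the hard part" is a genuine gap, and the mechanism you propose for closing it (co-area formula plus the decay $\lambda_1=o(1)/f$ from Lemma \ref{degen}) is not the right one. There is no pointwise sign on $\tfrac13R^2-|Ric|^2$, and the conditionally convergent integral $\int_M(\tfrac13R^2-|Ric|^2)$ cannot be controlled by volume decay alone. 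The paper resolves this with the soliton identity $\Delta_fR=R-2|Ric|^2$: writing
\begin{equation*}
|Ric|^2-\tfrac13R^2=\bigl(|Ric|^2-\tfrac12R\bigr)+\bigl(\tfrac12R-\tfrac13R^2\bigr)
=-\tfrac12\Delta_fR+\tfrac13R\bigl(\tfrac32-R\bigr),
\end{equation*}
the first term integrates over $D(a)$ to a boundary flux $-\tfrac12\int_{\Sigma(a)}\langle\nabla R,\tfrac{\nabla f}{|\nabla f|}\rangle\to0$ plus the nonnegative bulk term $\int_{D(a)}Ric(\nabla f,\nabla f)$, while the second term is pointwise nonnegative precisely because $R\le\tfrac32$. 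This yields $\int_M|W|^2\ge\tfrac23\int_MR(\tfrac32-R)+32\pi^2\chi(M)$ with no conditional-convergence issue; your proposal never produces this identity, and without it the sign of the curvature integral is not determined.

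A second, smaller omission: even once the inequality above is in hand, the hypothesis $\int_M|W|^2\le32\pi^2$ only gives $\chi(M)\le1$ directly, not $\chi(M)\le0$. The case $\chi(M)=1$ must be excluded separately: equality forces $\int_MR(\tfrac32-R)=0$, hence $R\equiv\tfrac32$ by analyticity, hence $(M,g,f)\cong\mathbb{R}\times\mathbb{S}^3$ by \cite{[FR]} --- which has $\chi=0$, a contradiction. Your proposal skips this dichotomy entirely. The identification of the asymptotic model in your first part is essentially correct and matches the paper (the paper is even terser there, not bothering to exclude $N^3=\mathbb{R}^3$ explicitly), so the defects are concentrated in the $\chi(M)\le0$ step.
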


\begin{proof}[Proof of Corollary \ref{Weylcon}]
We claim that $(M, g, f)$ converges along the integral curve of
$\nabla f$ to $\mathbb{R}\times \mathbb{S}^3$. If not, then $(M, g, f)$
converges along the integral curve of $\nabla f$ to $\mathbb{R}^2\times \mathbb{S}^2$,
then the norm of Weyl curvature is bounded from below by a positive constant,
which implies $\int_{M}|W|^2=\infty$. This is impossible due to our assumption.

We observe that the Gauss-Bonnet-Chern formula with boundary \eqref{Gauss-Bonnet-Chern} gives
that
\begin{align}\label{weyl integral}
\int_{D(a)} |W|^2 =2\int_{D(a)}(|Ric|^2-\tfrac{1}{3}R^2)+32\pi^2 \chi(D(a), \Sigma(a))-8\int_{\Sigma(a)}\digamma,
\end{align}
where
\[
\digamma=\tfrac{1}{2}R H-R_{11}H-R_{kikj}L^{ij}+\tfrac{1}{3}H^3-H |L|^2 +\tfrac{2}{3}tr L^3.
\]
Since the shrinker is asymptotic to $\mathbb{R}\times \mathbb{S}^3$, then
the second fundamental form of $\Sigma(a)$ tends to zero as $a\rightarrow \infty$.
So
\[
\int_{\Sigma(a)}\digamma\rightarrow 0
\]
as $a\rightarrow\infty$. Notice that
\begin{align*}
\int_{D(a)}(|Ric|^2-\tfrac{1}{3}R^2)
&=\int_{D(a)}(|Ric|^2-\tfrac{1}{2}R+\tfrac{1}{2}R-\tfrac{1}{3}R^2)\\
&=\int_{D(a)}\left(-\tfrac{1}{2}\Delta_f R+\tfrac{1}{3}R(\tfrac{3}{2}-R)\right)\\
&\geq \int_{D(a)}\left(-\tfrac{1}{2}\Delta R+\tfrac{1}{2}\langle\nabla f, \nabla R\rangle+\tfrac{1}{3}R(\tfrac{3}{2}-R)\right)\\
&\geq \int_{D(a)}\left(-\tfrac{1}{2}\Delta R+Ric(\nabla f, \nabla f)+\tfrac{1}{3}R(\tfrac{3}{2}-R)\right)\\
&\geq -\tfrac{1}{2}\int_{\Sigma(a)}\langle\nabla R, \tfrac{\nabla f}{|\nabla f|}\rangle+\int_{D(a)}\tfrac{1}{3}R(\tfrac{3}{2}-R).
\end{align*}
Also notice that
\[
\int_{\Sigma(a)}\langle\nabla R,\tfrac{\nabla f}{|\nabla f|}\rangle\to 0
\]
as $a\rightarrow\infty$. Using these information in \eqref{weyl integral},
we obtain that
\begin{align}\label{oulashuguanxishi}
\int_{M}|W|^2\geq \tfrac{2}{3}\int_{M}R(\tfrac{3}{2}-R)+32\pi^2 \chi(M).
\end{align}
Since $\int_{M}|W|^2\leq 32\pi^2$ and $R\leq \frac{3}{2}$, then $\chi(M)\leq 1$.
If $\chi(M)=1$, then (\ref{oulashuguanxishi}) implies $R=\frac{3}{2}$, which
implies $(M, g, f)$ is isometric to $\mathbb{R}\times \mathbb{S}^3$ by
\cite{[FR]}. However the Euler characteristic of $\mathbb{R}\times \mathbb{S}^3$
is zero. So this situation does not occur. Therefore $\chi(M)\leq 0$. Combining this
with Theorem \ref{clasfi0} implies that $(M, g, f)$ is isometric to
$\mathbb{R}\times \mathbb{S}^3$.
\end{proof}

In the end of this section, we point out that the argument of Theorem \ref{clasfi0}
by the Gauss-Bonnet-Chern formula gives a by-product about
volume comparison result for compact shrinkers.

\begin{theorem}\label{shrcomp}
Let $(M, g, f)$ be a $4$-dimensional simply connected
compact shrinker. If $\frac{1}{4}<\lambda_1(x)\le\frac{1}{2}$ for any $x\in M$,
then
\begin{align}\label{volume comparison}
Vol(M)\leq 48\pi^2 \chi(M).
\end{align}
Moreover, equality holds if and only if $(M, g)$ is isometric to $\mathbb{S}^4$.
\end{theorem}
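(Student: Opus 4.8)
The plan is to rerun the Gauss-Bonnet-Chern argument from the proof of Theorem~\ref{clasfi0} on the closed manifold $M$, where all boundary integrals vanish, and then combine the resulting integral inequality with an elementary one-variable estimate for $\lambda_1$.

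First, observe that $\tfrac14<\lambda_1$ forces $Ric>0$, so the eigenvalue inequality \eqref{eigeninequ}, the curvature decomposition, the Cauchy--Schwarz step and Lemma~\ref{eigineq} apply verbatim and give \eqref{trace back} pointwise on $M$. Integrating \eqref{eigeninequ} over the closed manifold $M$ (arguing for the Lipschitz barrier subsolution $\lambda_1$ exactly as in Theorem~\ref{clasfi0}, using $\Delta f=2-R$ on the left), substituting \eqref{trace back} on the right, and eliminating $\int_M|W|^2$ by the Gauss-Bonnet-Chern formula \eqref{Gauss-Bonnet-Chern} with $\Sigma=\emptyset$, the terms collapse exactly as in the derivation of \eqref{4chuyi3}, now with no boundary contributions, to give
\[
\int_M\lambda_1\le\tfrac43\int_M\lambda_1^2+8\pi^2\chi(M),\qquad\text{equivalently}\qquad\int_M\Big(\lambda_1-\tfrac43\lambda_1^2\Big)\le 8\pi^2\chi(M).
\]

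Next I would use that $h(t):=t-\tfrac43t^2$ satisfies $h(t)\ge\tfrac16$ on $[\tfrac14,\tfrac12]$, with equality precisely at the two endpoints $t=\tfrac14$ and $t=\tfrac12$ (the maximum of $h$ being interior). Since $\tfrac14<\lambda_1\le\tfrac12$, pointwise $\lambda_1-\tfrac43\lambda_1^2\ge\tfrac16$, with equality there iff $\lambda_1=\tfrac12$; hence $\tfrac16\,Vol(M)\le 8\pi^2\chi(M)$, which is \eqref{volume comparison}. For the equality case, $Vol(M)=48\pi^2\chi(M)$ forces $h(\lambda_1)\equiv\tfrac16$, so $\lambda_1\equiv\tfrac12$ by continuity. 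Then $\mathrm{Hess}\,f=\tfrac12g-Ric\le 0$, so on the closed manifold $M$ this trace-integrates to $\mathrm{Hess}\,f\equiv 0$ and $f$ is constant; by \eqref{Eq1}, $Ric=\tfrac12 g$ and $R=2$. Inserting $|Ric|^2=1$, $R=2$ and $Vol(M)=48\pi^2\chi(M)$ back into \eqref{Gauss-Bonnet-Chern} (with $\Sigma=\emptyset$) gives $\int_M|W|^2=32\pi^2\chi(M)-\tfrac23\,Vol(M)=0$, so $W\equiv 0$; a conformally flat Einstein $4$-manifold has constant sectional curvature, here positive, and being simply connected it is the round $\mathbb{S}^4$. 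Conversely, $\mathbb{S}^4$ normalized to $Ric=\tfrac12 g$ (radius $\sqrt6$) has $\lambda_1=\tfrac12$ and $Vol=96\pi^2=48\pi^2\,\chi(\mathbb{S}^4)$, so equality indeed holds.

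The integral inequality is routine, being the noncompact computation with every boundary term deleted. The step needing care is the rigidity analysis: one must notice that $h$ attains its minimum $\tfrac16$ at both ends of $[\tfrac14,\tfrac12]$, so that the \emph{strict} lower bound $\lambda_1>\tfrac14$ is exactly what forces $\lambda_1\equiv\tfrac12$ in the equality case, and then exploit the superharmonicity of $f$ on a closed manifold --- giving $f$ constant and $(M,g)$ Einstein --- followed by a second application of Gauss-Bonnet-Chern to eliminate the Weyl tensor.
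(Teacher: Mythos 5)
Your proposal is correct and follows essentially the same route as the paper: derive the boundary-free version of \eqref{4chuyi3} on the closed manifold via the Gauss-Bonnet-Chern formula, apply the pointwise bound $\lambda_1-\tfrac{4}{3}\lambda_1^2\ge\tfrac16$ on $(\tfrac14,\tfrac12]$, and in the equality case conclude $\lambda_1\equiv\tfrac12$, hence $\Delta f\le 0$ so $f$ is constant and $Ric=\tfrac12 g$, after which a second application of Gauss-Bonnet-Chern kills the Weyl tensor. Your write-up is in fact slightly more explicit than the paper's at two points the paper leaves implicit --- that the minimum of $h(t)=t-\tfrac43 t^2$ on $[\tfrac14,\tfrac12]$ is attained at both endpoints (so the strict bound $\lambda_1>\tfrac14$ is what forces $\lambda_1\equiv\tfrac12$), and that a simply connected conformally flat Einstein $4$-manifold of positive curvature is the round sphere.
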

\begin{proof}[Proof of Theorem \ref{shrcomp}]
The proof is exactly the same as the argument of Theorem \ref{clasfi0}.
Here the compact case is much easier because there is no boundary term.
In fact, we follow the argument of Theorem \ref{clasfi0} and still have
the formula \eqref{4chuyi3}, that is,
\[
\int_M \lambda_1\leq \tfrac{4}{3}\int_M \lambda_1^2+ 8\pi^2 \chi(M).
\]
Since
\[
\lambda_1-\tfrac{4}{3} \lambda_1^2\ge \tfrac{1}{6}
\]
for $\tfrac{1}{4}<\lambda_1\le\tfrac{1}{2}$, we immediately get
\begin{align*}
Vol(M)\leq 48\pi^2 \chi(M),
\end{align*}
which proves the inequality. Next, we consider the equality case.
If $Vol(M)= 48\pi^2 \chi(M)$, then
\[
\lambda_1=\tfrac{1}{2}
\quad \text{and}\quad
R\geq 4 \lambda_1=2.
\]
Since $\Delta f+R=2$, we get $\Delta f\leq 0$, which implies $f$ is
constant, i.e., $Ric=\frac{1}{2}g$.  Combining this with the
Gauss-Bonnet-Chern formula on closed $4$-dimensional manifolds, we have
\[
\int_M |W|^2 =2 \int_M (|Ric|^2-\frac{1}{3}R^2)+32\pi^2\chi(M),
\]
which implies
\[
\int_M |W|^2 =-\frac{2}{3}Vol(M)+32\pi^2\chi(M).
\]
This implies $W\equiv 0$ and hence $(M, g)$ is isometric
to $\mathbb{S}^4$.
\end{proof}

\begin{remark}
The volume comparison \eqref{volume comparison} can be easily obtained on Einstein manifolds
with $Ric=\frac{1}{2}g$ due to the Gauss-Bonnet-Chern formula. In our setting,
when $\frac{1}{4}g<Ric\le \frac{1}{2}g$, we clearly see that volume comparison
\eqref{volume comparison} is better than the classical Bishop-Gromov volume
comparison.
\end{remark}

\section{Classification when the asymptotic limit is $\mathbb{R}^2\times \mathbb{S}^2$}
\label{R2S2}

In this section, we shall study various classifications of shrinkers
when the limit is $\mathbb{R}^2\times \mathbb{S}^2$. We first consider
the case of sharp upper sectional curvature, i.e., Theorem \ref{clasfi}.
To prove this theorem, we need the following result.
\begin{lemma}\label{pin2}
Let $(M,g, f)$ be a $4$-dimensional complete noncompact shrinker with
$Ric\ge0$. If $K\le 1/2$, then
\begin{equation}\label{inbeint}
\Delta_f (\lambda_1+\lambda_2)\le\Delta_f R+(\lambda_1+\lambda_2)(R-1)-4\lambda_1\lambda_2
\end{equation}
in the barrier sense.
\end{lemma}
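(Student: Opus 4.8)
The plan is to run the barrier argument behind \eqref{evoRic0}--\eqref{evoRia1} for the quantity $\lambda_1+\lambda_2$ in place of $\lambda_1$. Since $\lambda_1+\lambda_2$ is the sum of the two smallest eigenvalues of $Ric$, by the Ky Fan minimum principle it equals $\min_{\Pi}\mathrm{tr}_{\Pi}Ric$ over all $2$-planes $\Pi$. At a point $p$ I would fix the minimizing plane $\Pi_p=\mathrm{span}\{e_1,e_2\}$, extend $e_1,e_2$ to an orthonormal frame that is parallel along radial geodesics from $p$, and put $h:=Ric(E_1,E_1)+Ric(E_2,E_2)$. Then $h$ is a smooth upper support function for $\lambda_1+\lambda_2$ at $p$, and since the extension is radially parallel the Laplacian of each $E_i$ vanishes at $p$, so \eqref{evo} gives
\[
\Delta_f(\lambda_1+\lambda_2)(p)\le\Delta_f h(p)=(\lambda_1+\lambda_2)-2\big(K_{12}(\lambda_1+\lambda_2)+(K_{13}+K_{23})\lambda_3+(K_{14}+K_{24})\lambda_4\big)
\]
after diagonalizing $R_{kl}=\lambda_k\delta_{kl}$, exactly as \eqref{evoRia1} is obtained.

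The next step is algebraic. Using the eigenvalue identities \eqref{eigrelat} in the form $K_{13}+K_{23}=\lambda_3-K_{34}$ and $K_{14}+K_{24}=\lambda_4-K_{34}$, the bound becomes
\[
\Delta_f(\lambda_1+\lambda_2)\le(\lambda_1+\lambda_2)-2K_{12}(\lambda_1+\lambda_2)-2\lambda_3^2-2\lambda_4^2+2K_{34}(\lambda_3+\lambda_4).
\]
Then I would substitute $\lambda_3^2+\lambda_4^2=|Ric|^2-\lambda_1^2-\lambda_2^2$, $\lambda_3+\lambda_4=R-\lambda_1-\lambda_2$, and $\Delta_fR=R-2|Ric|^2$ from \eqref{Sequat}, and compare with the claimed right-hand side. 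Since $2\lambda_1^2+2\lambda_2^2+4\lambda_1\lambda_2=2(\lambda_1+\lambda_2)^2$ and, again by \eqref{eigrelat}, $(\lambda_1+\lambda_2)-2K_{12}=(\lambda_3+\lambda_4)-2K_{34}$, the difference between the two sides collapses to the single term
\[
\big(\lambda_1+\lambda_2-\lambda_3-\lambda_4\big)\big(1-2K_{34}\big),
\]
and the proof finishes by noting this is $\le 0$: the ordering $\lambda_1\le\lambda_3$, $\lambda_2\le\lambda_4$ makes the first factor nonpositive, while the hypothesis $K\le 1/2$ makes $1-2K_{34}\ge 0$.

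I expect the only subtle point to be the justification of the barrier inequality $\Delta_f(\lambda_1+\lambda_2)\le\Delta_fh$ at $p$ — that ``freezing'' the minimizing $2$-plane yields a genuine smooth upper barrier — but this is the same device already used for $\lambda_1$ in Section~\ref{pre}, now applied to the sum of the two lowest eigenvalues. The rest is a bookkeeping computation, whose one decisive feature is the cancellation that reduces the whole estimate to the factor $1-2K_{34}$; this is exactly why the sharp threshold $K\le 1/2$ appears, and it explains why $\mathbb{R}^2\times\mathbb{S}^2$ (where $K_{34}=1/2$) is the borderline model.
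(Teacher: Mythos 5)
Your proposal is correct and follows essentially the same route as the paper: the same barrier inequality for $\lambda_1+\lambda_2$, the same identities $K_{13}+K_{23}=\lambda_3-K_{34}$, $K_{14}+K_{24}=\lambda_4-K_{34}$ and $(\lambda_1+\lambda_2)-2K_{12}=(\lambda_3+\lambda_4)-2K_{34}$, and the same use of $K_{34}\le 1/2$ together with the eigenvalue ordering (the paper instead substitutes $2K_{12}\le 1-R+2(\lambda_1+\lambda_2)$ into its intermediate formula, which is the same estimate packaged differently from your factorization $(\lambda_1+\lambda_2-\lambda_3-\lambda_4)(1-2K_{34})\le 0$). Your explicit justification of the upper support function via the Ky Fan principle and radially parallel frames is the standard device the paper uses implicitly for $\lambda_1$ in \eqref{evoRic0}, so there is no gap.
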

\begin{proof}[Proof of Lemma \ref{pin2}]
Recall that
\begin{align*}
\Delta_f (\lambda_1+\lambda_2)&\leq \lambda_1-2(K_{12}\lambda_2+K_{13}\lambda_3+K_{14}\lambda_4)
+\lambda_2-2(K_{21}\lambda_1+K_{23}\lambda_3+K_{24}\lambda_4)\\
&=(\lambda_1+\lambda_2)-2\left[K_{12}(\lambda_1+\lambda_2)
+\lambda_3(K_{13}+K_{23})+\lambda_4(K_{14}+K_{24})\right]\\
&=(\lambda_1+\lambda_2)-2K_{12}(\lambda_1+\lambda_2)
+2K_{34}(\lambda_3+\lambda_4)-2(\lambda^2_3+\lambda^2_4)
\end{align*}
in the barrier sense, where we used $K_{13}+K_{23}=\lambda_3-K_{34}$ and
$K_{14}+K_{24}=\lambda_4-K_{34}$ in the last equality. We then apply
\[
2K_{34}=2K_{12}+(\lambda_3+\lambda_4-\lambda_1-\lambda_2)
\]
to further get that
\begin{equation}
\begin{aligned}\label{eigenevo}
\Delta_f (\lambda_1+\lambda_2)&\le(\lambda_1+\lambda_2)-(\lambda_1+\lambda_2)(\lambda_3+\lambda_4)
+2K_{12}[(\lambda_3+\lambda_4)-(\lambda_1+\lambda_2)]\\
&\quad+\left[R-(\lambda_1+\lambda_2)\right]^2-2\left[|Ric|^2-(\lambda^2_1+\lambda^2_2)\right]
\end{aligned}
\end{equation}
in the barrier sense. Since $K\le 1/2$, then
\[
2K_{34}=2K_{12}+(R-2\lambda_1-2\lambda_2)\le 1,
\]
which implies
\[
2K_{12}\le 1-R+2(\lambda_1+\lambda_2).
\]
Substituting this estimate into \eqref{eigenevo} and using $R-2|Ric|^2=\Delta_fR$, we
finally get the desired result.
\end{proof}

We are now in position to prove Theorem \ref{clasfi} by Lemma \ref{pin2}.
\begin{proof}[Proof of Theorem \ref{clasfi}]
By the assumption of scalar curvature, we get that $(M, g)$ converges along
the integral curve of $\nabla f$ to $\mathbb{R}\times (\mathbb{R}\times \mathbb{S}^2)/\Gamma$
for some $\Gamma$.

\vspace{0.5em}

\textbf{Claim 1}: $(\lambda_1+\lambda_2)\to 0$ and $R\to 1$ uniformly at infinity.

If Claim 1 is false, then there is a sequence of points
$q_i\to\infty$ but $(\lambda_1+\lambda_2)(q_i)\geq \delta$ for some $\delta>0$.
By Theorem 1.4 in \cite{[Na]}, for the $4$-dimensional shrinker $(M,g,f)$ with
bounded curvature (bounded scalar curvature assures the bounded curvature operator;
see \cite{[MWa]}), the associated Ricci flow $(M, g(t), q_i)$ defined on $(-\infty, 1)$, where
\[
g(t):=(1-t)\phi^*_tg,\quad \frac{d\phi_t}{dt}=\frac{\nabla f}{1-t}
\quad  \text{and}\quad \phi_0=\mathrm{Id},
\]
is $\kappa$-noncollapsed, where $\kappa=\kappa(n, V_f(M))$. By Hamilton's
Cheeger-Gromov compactness theorem \cite{[Ham]}, the associated Ricci flow
sub-converges to an ancient $\kappa$-noncollapsed solution
$(M_{\infty}, g_{\infty}(t), q_{\infty})$ . Since the curvature is
bounded, by \eqref{equat} and Theorem \ref{pote}, $|\nabla f(x)|\to \infty$
at the linear growth rate as $x\to \infty$. Consider a sequence of functions
\[
f_i(x):=\frac{f(x)-f(q_i)}{|\nabla f(q_i)|}.
\]
It satisfies
\[
|\nabla f_i(q_i)|=1 \quad\text{and} \quad
|\mathrm{Hess}\,f_i|=\frac{|\tfrac{1}{2}g-Ric(g)|}{|\nabla f(q_i)|}.
\]
on $(M^n,g(t))$. Since $|\nabla f(x)|\to \infty$ as $x\to\infty$,
$|\nabla f(q_i)|\to\infty$ as $q_i\to\infty$. Combining this with the bounded
curvature condition, we get $|\mathrm{Hess}\, f_i|\to 0$ uniformly as
$i\to\infty$. Hence along the convergence of $(M^n, g(t), q_i)$, the sequence
of functions $f_i(x)$ smoothly converges to a limit $f_\infty$ satisfying
$|\nabla f_\infty(q_{\infty})|=1$ and $\mathrm{Hess}\, f_\infty\equiv 0$
on $(M_{\infty}, g_{\infty}(t))$. This implies that
$(M_{\infty}, g_{\infty}(t), q_{\infty})$ isometrically splits
as $(\mathbb{R}\times N^3, g_\mathbb{R}+g_{N^3(t)})$, where $(N^3, g_{N^3(t)})$
is an $3$-dimensional ancient and $\kappa$-noncollapsed solution.
By the recent work of  \cite{Angenent-Brendle-Daskalopoulos-Sesum,Brendle,Brendle-Daskalopoulos-Sesum},
the compact case also by \cite{Bamler-Kleiner}, there are four possibilities for $N^3$.

(1). If $N^3$ is isometric to $S^3/\Gamma$, by the main result in Munteanu-Wang \cite{[MW19]},
$(M, g, f)$ converges uniformly to $\mathbb{R}\times \mathbb{S}^3/\Gamma$. This is
contradiction with our scalar curvature condition!

(2). If $N^3$ is isometric to a finite quotient of the Type II ancient solution
constructed by Perelman, this indicates that there is a sequence level sets
$\Sigma(s_i)$ whose volumes are uniformly bounded above. Since $(M, g, f)$ converges to
$\mathbb{R}\times (\mathbb{R}\times \mathbb{S}^2)/\Gamma$ along the integral
curve of $\nabla f$, the volume of the level set $\Sigma(s)$ tends to infinity. So
it is impossible.

(3). If $N^3$ is isometric to the Bryant soliton, since $Ric\geq 0$ and
$\langle\nabla R, \nabla f\rangle=2Ric(\nabla f, \nabla f)$, then $R$ is
increasing along the integral curve of $\nabla f$. So $R$ is bounded from
below by some positive constant. This is impossible due to a fact that
the scalar curvature of Bryant soliton does not have a positive lower
bound.

(4). If $N^3$ is isometric to $(\mathbb{R}\times \mathbb{S}^2)/\Gamma$,
 then $(\lambda_1+\lambda_2)(q_\infty)=0$.

To sum up, $(\lambda_1+\lambda_2)\to 0$ uniformly at infinity.
We similarly argue to derive that $R\to 1$ uniformly at infinity.
So Claim 1 follows.

\vspace{0.5em}

\textbf{Claim 2}: There exists $b$ large enough such that
\[
\int_{\Sigma(b)}\langle\nabla (\lambda_1+\lambda_2), \tfrac{\nabla f}{|\nabla f|}\rangle \le 0.
\]
To prove this claim, we first consider the following one parameter of diffeomorphism,
\[
\frac{\partial F}{\partial t}=\frac{\nabla f}{|\nabla f|^2}
\quad \text{with}\quad F(x, a_0)=x \in \Sigma(a_0).
\]
The advantage of map $F$ is that it maps the level set of $f$ to another level set,
in particular $f(F(x, t))=t$ for any $x\in\Sigma(a_0)$. Let $\{x_1, x_2, x_3\}$
be a local coordinate chart of $\Sigma(a_0)$. On $\Sigma(t)$, let
\[
g(\tfrac{\partial }{\partial x_i},\tfrac{\partial}{\partial x_j}):=g(\tfrac{\partial F}{\partial x_i}, \tfrac{\partial F}{\partial x_j})
\quad \text{and}\quad
dvol_{\Sigma(t)}=\sqrt{det(g_{ij})}dx,
\]
where $dx=dx_1\wedge dx_2\wedge dx_3$. For simplicity, denote by $E=\frac{\nabla f}{|\nabla f|}$.
We compute the derivatives of $dvol_{\Sigma(t)}$,
\begin{align*}
\frac{\partial}{\partial t}dvol_{\Sigma(t)}&=\frac{\partial}{\partial t}\sqrt{det(g_{ij})}dx\\
&= g^{ij}\langle \nabla_{\frac{\partial F}{\partial x_i}}\tfrac{\partial F}{\partial t}, \tfrac{\partial F}{\partial x_j}\rangle dvol_{\Sigma(t)}\\
&= g^{ij}\langle \nabla_{\frac{\partial F}{\partial x_i}}\tfrac{\nabla f}{|\nabla f|^2}, \tfrac{\partial F}{\partial x_j}\rangle dvol_{\Sigma(t)}\\
&=\frac{1}{|\nabla f|^2}g^{ij}\mathrm{Hess}\, f(\tfrac{\partial F}{\partial x_i}, \tfrac{\partial F}{\partial x_j})dvol_{\Sigma(t)}\\
&=\frac{1}{|\nabla f|^2} g^{ij}\left(\tfrac{1}{2}g(\tfrac{\partial F}{\partial x_i}, \tfrac{\partial F}{\partial x_j})-Ric(\tfrac{\partial F}{\partial x_i}, \tfrac{\partial F}{\partial x_j})\right)dvol_{\Sigma(t)}\\
&=\frac{1}{|\nabla f|^2} \left(\tfrac{3}{2}-R+Ric(E, E)\right)dvol_{\Sigma(t)}.
\end{align*}
We compute the derivative of $|\nabla f|$,
\begin{align*}
\frac{\partial }{\partial t}|\nabla f|
&=\langle \nabla f, \nabla f\rangle ^{-\frac{1}{2}}\langle\nabla_{\tfrac{\partial F}{\partial t}}\nabla f, \nabla f\rangle\\
&=\frac{1}{|\nabla f|}\left(\tfrac{1}{2}-Ric(E, E)\right).
\end{align*}
Combining the above two parts, we have
\begin{align*}
\frac{\partial}{\partial t}\left(\tfrac{|\nabla f|}{t}dvol_{\Sigma(t)}\right)
&=\tfrac{|\nabla f|}{t}\cdot\tfrac{1}{|\nabla f|^2} \left(\tfrac{3}{2}-R+Ric(E, E)\right)dvol_{\Sigma(t)}\\
&\quad +\left(\tfrac{-|\nabla f|}{t^2}+\tfrac{1}{t}\cdot\tfrac{1}{|\nabla f|}(\tfrac{1}{2}-Ric(E, E))\right)dvol_{\Sigma(t)}\\
&=\frac{1}{t|\nabla f|} \left(\tfrac{3}{2}-R+Ric(E, E)-\tfrac{|\nabla f|^2}{t}+\tfrac{1}{2}-Ric(E, E)\right)dvol_{\Sigma(t)}\\
&=\frac{1}{t|\nabla f|}\left(2-R-\tfrac{|\nabla f|^2}{t}\right)dvol_{\Sigma(t)}.
\end{align*}
Since $Ric\ge0$, $R$ is increasing along the integral curve of $\nabla f$.
Moreover, the asymptotic limit of the shrinker at infinity is
$\mathbb{R}\times(\mathbb{R}^2\times \mathbb{S}^2)/\Gamma$ for some $\Gamma$
whose $R=1$ by \cite{[Na]}. Hence $R\leq 1$ on $M\setminus D(a)$. Combining this
with $R+|\nabla f|^2=f$, we get $\frac{|\nabla f|^2}{t}\leq 1$. So
\begin{align}\label{volume derivative inequality}
\frac{\partial}{\partial t}\left(\tfrac{|\nabla f|}{t}dvol_{\Sigma(t)}\right) \geq 0.
\end{align}

On the other hand, since the curvature is bounded, the shrinker is $\kappa$-noncollapsed \cite{li-wang}.
So we obtain the volume of any ball of radius one has a positive uniform lower bound, thus the injectivity radius of $(M^4, g)$ is bounded from below by some positive constant. Then we can apply
Zhu's volume estimate (Corollary 1.9 in \cite{[Zhubo]}) to derive that
\[
Vol(B(p, r))\leq C\cdot r
\]
for some $C>0$. Combining this with the estimate: $f(x)\sim \frac{1}{4}d(x, p)^2$
in Theorem \ref{pote}, we have that
\[
Vol(D(t))\leq C_1\cdot t
\]
for some $C_1>0$. So there is a sequence $t_i\rightarrow \infty$ such that
\[
\frac{d}{dt}Vol(D(t))|_{t=t_i}\leq C_2
\]
for some $C_2>0$. Using this, and combining the coarea formula
$\frac{d}{dt}Vol(D(t))=\int_{\Sigma(t)}\frac{1}{|\nabla f|}$
and $|\nabla f|\sim \sqrt{f}$ (this is due to $R+|\nabla f|^2=f$ and $R$ is bounded), we have
\[
Vol(\Sigma(t_i))\leq C_3 \sqrt{t_i}
\]
for some $C_3>0$. By Claim 1, $\lambda_1+\lambda_2$ tends to zero at infinity.
Hence the quantity
\begin{align*}
\int_{\Sigma(t)}(\lambda_1+\lambda_2)\tfrac{|\nabla f|}{t}dvol_{\Sigma(t)}
\end{align*}
tends to zero along the sequence $t_i$. Thus there exists $b$ large enough such that
\begin{align}\label{volume derivative}
\frac{d}{dt}\int_{\Sigma(t)}(\lambda_1+\lambda_2)\tfrac{|\nabla f|}{t}dvol_{\Sigma(t)}\Big|_{t=b}\leq 0.
\end{align}

Now, using \eqref{volume derivative inequality} we compute that
\begin{align*}
\frac{d}{dt}\int_{\Sigma(t)}(\lambda_1+\lambda_2)\tfrac{|\nabla f|}{t}dvol_{\Sigma(t)}\Big|_{t=b}
&=\int_{\Sigma(b)} \langle\nabla(\lambda_1+\lambda_2), \tfrac{\nabla f}{|\nabla f|^2}   \rangle\tfrac{|\nabla f|}{b}dvol_{\Sigma(b)}\\
&\quad+\int_{\Sigma(b)}(\lambda_1+\lambda_2)\frac{\partial}{\partial t}\left(\tfrac{|\nabla f|}{t}dvol_{\Sigma(t)}\right)\\
&\geq\int_{\Sigma(b)}\langle\nabla(\lambda_1+\lambda_2),\tfrac{\nabla f}{|\nabla f|}   \rangle\tfrac{1}{b}dvol_{\Sigma(b)}.
\end{align*}
This result together with \eqref{volume derivative} yields
\[
\int_{\Sigma(b)}\langle\nabla(\lambda_1+\lambda_2),\tfrac{\nabla f}{|\nabla f|}\rangle
dvol_{\Sigma(b)}\leq 0.
\]
This finishes the proof of Claim 2.

\vspace{0.5em}


Next, we will apply Lemma \ref{pin2} and Claim 2 to continue our proof.
Since $Ric\ge 0$ and $R\leq 1$ on  $M\setminus D(b)$. Integrating \eqref{inbeint} over
$M\setminus D(b)$ gives that
\begin{equation}
\begin{aligned}\label{intebp}
\int_{M\setminus D(b)}\Delta_f (\lambda_1+\lambda_2)e^{-f}
&\le\int_{M\setminus D(b)}\Delta_f R e^{-f}+\int_{M\setminus D(b)}\left[(\lambda_1+\lambda_2)(R-1)-4\lambda_1\lambda_2\right]e^{-f}\\
&\le-\int_{\Sigma(b)}\langle\nabla R, \tfrac{\nabla f}{|\nabla f|}\rangle e^{-f}\\
&=-2\int_{\Sigma(b)}Ric(\nabla f, \tfrac{\nabla f}{|\nabla f|}) e^{-f}\\
&\le 0.
\end{aligned}
\end{equation}
Since $\lambda_1+\lambda_2$ is continuous and Lipschitz, from Proposition
\ref{keyprop} in Appendix, we see that the left hand side term in
\eqref{intebp} can be regarded as
\[
\int_{M\setminus D(b)}\Delta_f (\lambda_1+\lambda_2)e^{-f}
=-\int_{\Sigma(b)}\langle\nabla (\lambda_1+\lambda_2), \tfrac{\nabla f}{|\nabla f|}\rangle e^{-f}\ge0
\]
for almost everywhere $b$, where we used Claim 2 in the second inequality.
Hence all inequalities in \eqref{intebp} become equalities and we conclude
\[
\int_{M\setminus D(b)}(\lambda_1+\lambda_2)(R-1)e^{-f}
=\int_{M\setminus D(b)}\lambda_1\lambda_2e^{-f}=\int_{M\setminus D(b)}\Delta_f R e^{-f}=0.
\]
This implies
\begin{equation}\label{ident}
\lambda_1 \lambda_2\equiv0\quad \text{and}\quad (\lambda_1+\lambda_2)(R-1)\equiv 0
\end{equation}
on $M\setminus D(b)$. Now we shall apply \eqref{ident} to prove our
theorem by the the following two cases.

Case 1.  If there exists $q\in M\backslash D(b)$ such that $\lambda_2(q)>0$,
then $\lambda_2(x)>0$ on $B(q, \delta)$ for some $\delta>0$. By \eqref{ident},
we get $R\equiv 1$ on $B(q, \delta)$ and hence $R\equiv 1$ on $M$ by the
analyticity of the soliton. By \cite{Cheng-Zhou}, $(M, g)$ is isometric to
$\mathbb{R}^2\times \mathbb{S}^2$. This contradicts with $\lambda_2(q)>0$.

Case 2. If $\lambda_1+\lambda_2\equiv 0$ on $M\backslash D(b)$, then from
Lemma \ref{pin2} we get $\Delta_f R\equiv 0$ on $M\setminus D(b)$. Therefore,
\begin{align*}
\int_{M\setminus D(b)}|\nabla R|^2 e^{-f}
&=\int_{M\setminus D(b)} div(R\nabla R e^{-f})-\Delta_f R\cdot S e^{-f}\\
&=-\int_{\Sigma(b)}\langle R \nabla R, \tfrac{\nabla f}{|\nabla f|}\rangle e^{-f}\\
&=-\int_{\Sigma(b)} 2 R\cdot Ric(\nabla f, \tfrac{\nabla f}{|\nabla f|}) e^{-f}\\
&\leq 0.
\end{align*}
So $\nabla R\equiv 0$ and $R$ is constant on $M\setminus D(b)$.
Hence $R\equiv 1$ on $M$ due to the asymptotic limit is $\mathbb{R}^2\times \mathbb{S}^2$
and the analyticity of the soliton. We again get that $(M, g)$ is isometric to
$\mathbb{R}^2\times \mathbb{S}^2$ by \cite{Cheng-Zhou}.
\end{proof}

In the rest of this section, we shall prove Theorem \ref{bi-Ricci} by adopting
a similar argument of Theorem \ref{clasfi}. The main difference is that
we shall use the formula $\Delta_f (\lambda_1+\lambda_2)$ instead of
$\Delta_f\lambda_1$.

\begin{proof}[Proof of Theorem \ref{bi-Ricci}]
Since the proof essentially follows the argument of Theorem \ref{clasfi}, we
only sketch some main proof steps. Let $\{e_i\}^4_{i=1}$ be an orthonormal
basis of $(M,g)$ such that they are the eigenvectors of $Ric$ with respect
to the corresponding eigenvalues $\{\lambda_i\}^4_{i=1}$. Since the bi-Ricci
curvature is nonnegative, then
\[
Ric(e_1, e_1)+Ric(e_2, e_2)-K_{12}\ge 0.
\]
Substituting this into \eqref{eigenevo}, we see that
\begin{equation}
\begin{aligned}\label{inbeint2}
\Delta_f (\lambda_1+\lambda_2)
&\leq(\lambda_1+\lambda_2)-(\lambda_1+\lambda_2)(\lambda_3+\lambda_4)
+2(\lambda_1+\lambda_2)(\lambda_3+\lambda_4-\lambda_1-\lambda_2)\\
 &\quad+(R-\lambda_1-\lambda_2)^2-2(|Ric|^2-\lambda_1^2-\lambda_2^2)\\
 &=(\lambda_1+\lambda_2)+R^2-R(\lambda_1+\lambda_2)-2(\lambda_1+\lambda_2)^2-2|Ric|^2+2(\lambda_1^2+\lambda_2^2)\\
 &= \Delta_fR+(1-R)(\lambda_1+\lambda_2)+R(R-1)-4 \lambda_1 \lambda_2\\
 &=\Delta_fR+(R-1)(\lambda_3+\lambda_3)-4 \lambda_1 \lambda_2
\end{aligned}
\end{equation}
in the barrier sense. Under our assumption of scalar curvature, $(M, g)$
still converges along the integral curve of $\nabla f$ to
$\mathbb{R}\times(\mathbb{R}\times \mathbb{S}^2)/\Gamma$. Meanwhile, similar
to the proof of Theorem \ref{clasfi}, we also have Claim 1 and Claim 2 in
the proof process of Theorem \ref{clasfi}. Then, integrating \eqref{inbeint2} over
$M\setminus D(b)$ gives that
\begin{equation}
\begin{aligned}\label{intebp2}
\int_{M\setminus D(b)}\Delta_f (\lambda_1+\lambda_2)e^{-f}
&\le\int_{M\setminus D(b)}\Delta_f R e^{-f}+\int_{M\setminus D(b)}\left[(R-1)(\lambda_3+\lambda_4)-4 \lambda_1 \lambda_2\right]e^{-f}\\
&\le-\int_{\Sigma(b)}\langle\nabla R, \tfrac{\nabla f}{|\nabla f|}\rangle e^{-f}\\
&=-2\int_{\Sigma(b)}Ric(\nabla f, \tfrac{\nabla f}{|\nabla f|}) e^{-f}\\
&\le 0,
\end{aligned}
\end{equation}
where we used $R\le 1$ on $M\setminus D(b)$ in the above second inequality.
Similar to the argument of Theorem \ref{clasfi}, we can conclude that
\[
\int_{M\setminus D(b)}\Delta_f (\lambda_1+\lambda_2)e^{-f}
=-\int_{\Sigma(b)}\langle\nabla (\lambda_1+\lambda_2), \tfrac{\nabla f}{|\nabla f|}\rangle e^{-f}\ge0
\]
holds for almost everywhere $b$. Therefore all inequalities in \eqref{intebp2}
become equalities. Thus,
\begin{equation}\label{ident2}
\lambda_1 \lambda_2\equiv0\quad \text{and}\quad (\lambda_3+\lambda_4)(R-1)\equiv 0.
\end{equation}
on $M\setminus D(b)$. Since $\lambda_3+\lambda_4\neq 0$ on $M\setminus D(b)$,
this forces $R\equiv 1$ on $M\setminus D(b)$. Then conclusion follows by
the same argument of Theorem \ref{clasfi}.
\end{proof}

\section{Classification when $R=1$}\label{csc}
In \cite{Cheng-Zhou} Cheng-Zhou proved the following result.
\begin{theorem}[\cite{Cheng-Zhou}]\label{ChengZh}
Let $(M,g, f)$ be a $4$-dimensional simply connected complete noncompact
shrinker with $R=1$. Then $(M, g, f)$ is isometric to $\mathbb{R}^2\times \mathbb{S}^2$.
\end{theorem}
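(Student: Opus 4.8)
The plan is to use the hypothesis $R\equiv1$ to pin down the Ricci tensor completely, show it is parallel, and then split $(M,g)$ by de Rham. First I would record the pointwise consequences of $R\equiv1$. From \eqref{Sequat} one gets $|Ric|^2\equiv\tfrac12$, and from $\nabla R=2Ric(\nabla f)$ one gets $Ric(\nabla f)=0$; in particular $0$ is an eigenvalue of $Ric$ wherever $\nabla f\neq0$, an open dense set by real analyticity (if $\nabla f\equiv0$ then $M$ is Einstein with $R=2$). Combining the constraints $\sum_i\lambda_i=1$ and $\sum_i\lambda_i^2=\tfrac12$ with the presence of a zero eigenvalue, an elementary case analysis forces the smallest eigenvalue itself to be $0$ (otherwise the remaining three would have sum $>1$ and sum of squares $<\tfrac12$, contradicting Cauchy–Schwarz); thus $\lambda_1\equiv0$, $Ric\ge0$, and where $\nabla f\neq0$ we may take the orthonormal Ricci eigenframe with $e_1=\nabla f/|\nabla f|$. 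Boundedness of $R$ also gives, via Theorem \ref{RmdyS} and Theorem \ref{infi}, that $(M,g)$ is asymptotic along integral curves of $\nabla f$ to $\mathbb{R}\times N^3$ with $N^3$ a $3$-dimensional shrinker of scalar curvature $1$; by the $3$-dimensional classification $N^3=(\mathbb{R}\times\mathbb{S}^2)/\Gamma$, so $\lambda_2\to0$ at infinity.

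Next I would extract the crucial curvature input. Contracting the shrinker Bianchi identity $\nabla_kR_{ij}-\nabla_jR_{ik}=-R_{ijkl}\nabla^lf$ with $\nabla^if$ and using $R_{ij}\nabla^if=0$ together with $\nabla_k\nabla^if=\tfrac12\delta^i_k-R^i_k$ from \eqref{equat}, the left-hand side collapses to $R_{ij}R^i_k-R_{ik}R^i_j=0$; hence $R_{ijkl}\nabla^if\nabla^lf\equiv0$, i.e. every plane containing $\nabla f$ is flat, so $K_{1i}\equiv0$ for $i=2,3,4$. With $\lambda_1=0$ and $K_{12}=K_{13}=K_{14}=0$ the identities \eqref{eigrelat} then force $K_{34}=\tfrac12-\lambda_2$, $K_{23}=\tfrac12-\lambda_4$, $K_{24}=\tfrac12-\lambda_3$. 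On the other hand, since $|Ric|^2$ is constant, \eqref{evo} yields $0=\tfrac12\Delta_f|Ric|^2=|\nabla Ric|^2+|Ric|^2-2R_{ikjl}R_{ij}R_{kl}$, so $|\nabla Ric|^2=2R_{ikjl}R_{ij}R_{kl}-\tfrac12$. Writing the cubic term in the Ricci eigenframe, $R_{ikjl}R_{ij}R_{kl}=2(K_{23}\lambda_2\lambda_3+K_{24}\lambda_2\lambda_4+K_{34}\lambda_3\lambda_4)$, and substituting the above expressions together with $\lambda_3+\lambda_4=1-\lambda_2$ and $\lambda_3\lambda_4=\tfrac14-\lambda_2+\lambda_2^2$, the whole expression collapses to
\[
|\nabla Ric|^2=-3\,\lambda_2\,(1-2\lambda_2)^2 .
\]
Since the left side is nonnegative while $\lambda_2\ge0$ and $\lambda_2\le R/3<\tfrac12$, this forces $\lambda_2\equiv0$ and $\nabla Ric\equiv0$ simultaneously. (Alternatively, and more in the spirit of Theorems \ref{clasfia} and \ref{clasfi}, one can feed $K_{1i}\equiv0$ into \eqref{eigenevo} to get $\Delta_f(\lambda_1+\lambda_2)\le 2\lambda_2(2\lambda_2-1)\le0$ in the barrier sense, integrate against $e^{-f}$ over an exterior region, and control the boundary flux on a large level set exactly as in Claim 2 of Theorem \ref{clasfi}, concluding $\lambda_2\equiv0$.)

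Finally I would conclude by rigidity. We now have that $Ric$ is parallel with eigenvalues $(0,0,\tfrac12,\tfrac12)$, so its $0$- and $\tfrac12$-eigendistributions are parallel rank-$2$ sub-bundles of $TM$; applying the de Rham decomposition theorem to the complete simply connected $M$ splits it isometrically as $M_1^2\times M_2^2$, where $M_1$ is Ricci-flat, hence flat, hence $\mathbb{R}^2$, and $M_2$ is Einstein with $Ric=\tfrac12 g$, hence the round $\mathbb{S}^2$. Therefore $(M,g)$ is isometric to $\mathbb{R}^2\times\mathbb{S}^2$, and the soliton potential is automatically the standard quadratic one on the $\mathbb{R}^2$ factor.

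The step I expect to be the main obstacle is bridging the pointwise estimates to a rigid conclusion. The streamlined route above rests on the algebraic coincidence that the candidate formula for $|\nabla Ric|^2$ reduces to a manifestly non-positive polynomial in $\lambda_2$, so the real work is organizing the curvature bookkeeping (the sign/normalization conventions in \eqref{eigrelat} and \eqref{evo}, and the contraction establishing $K_{1i}\equiv0$) without error. If instead one follows the analytic route via \eqref{eigenevo}, the delicate point is the boundary-flux estimate $\int_{\Sigma(b)}\langle\nabla(\lambda_1+\lambda_2),\nabla f/|\nabla f|\rangle e^{-f}\le0$ on a suitable large level set, which requires Zhu's linear volume estimate, the level-set flow computation of Theorem \ref{clasfi}, and the weighted integration-by-parts lemmas needed to handle the mere Lipschitz regularity of $\lambda_1+\lambda_2$; this is precisely why it helps to have pinned the asymptotic model $\mathbb{R}^2\times\mathbb{S}^2/\Gamma$ at the outset.
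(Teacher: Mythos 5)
Your opening reductions are fine: $R\equiv 1$ does give $|Ric|^2\equiv\tfrac12$, $Ric(\nabla f)=0$, and hence $\lambda_1\equiv 0$ and $Ric\ge 0$ (this is the lemma of Fern\'andez-L\'opez--Garc\'ia-R\'io quoted in the paper), and the de Rham splitting at the end would be fine once $\nabla Ric\equiv 0$ is known. The fatal gap is the ``crucial curvature input'' $K_{1i}\equiv 0$. The soliton Bianchi identity, written with consistent symmetries, is $\nabla_a R_{bc}-\nabla_b R_{ac}=R_{abcd}\nabla^d f$: the left side is antisymmetric in $(a,b)$, matching the antisymmetry of the first pair of $R_{abcd}$, and the index shared by the two Ricci factors is $c$. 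The contraction you describe --- with $\nabla f$ in the slot where $R_{ij}\nabla^i f=0$ applies, i.e.\ with $\nabla^c f$ --- pairs $\nabla^c f\nabla^d f$ against the \emph{antisymmetric} last pair $(c,d)$ of the curvature tensor, so the right-hand side vanishes identically and the identity degenerates to $(Ric^2)_{ab}=(Ric^2)_{ba}$, which is vacuous. Your nonzero conclusion $R_{ijkl}\nabla^if\nabla^lf=0$ arises only from a mis-indexed curvature term. The \emph{non}-trivial contraction (with $\nabla^a f$) yields precisely \eqref{changshuliangqulvzhongyaodedengshi},
\[
Rm(\nabla f,\cdot,\nabla f,\cdot)=\nabla_{\nabla f}Ric-Ric\circ(Ric-\tfrac12 g),
\]
which contains the uncontrolled term $\nabla_{\nabla f}Ric$; it gives no pointwise vanishing of $K_{1i}$, only the decay $K_{12}=o(1)/f$ at infinity once the asymptotic model is known (this is exactly Claim~4 in the paper's proof).

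Everything downstream depends on this false input: the (otherwise correctly computed) identity $|\nabla Ric|^2=-3\lambda_2(1-2\lambda_2)^2$ and the alternative inequality $\Delta_f(\lambda_1+\lambda_2)\le 2\lambda_2(2\lambda_2-1)$ both require $K_{12}\equiv 0$, so neither is available. Controlling the term $2K_{12}$ (equivalently $2W_{12}$, after the curvature decomposition) in the integrated evolution of $\lambda_1+\lambda_2$ is in fact the entire difficulty of the theorem: the paper handles it not pointwise but in integrated form, bounding $\int_{D(a,b)}W_{12}$ via the Gauss--Bonnet--Chern formula with boundary together with the sharp algebraic bound $W_{12}^2\le\tfrac1{12}|W|^2$ and a careful computation of the boundary density $\digamma$, and then closes the argument with the level-set boundary-flux estimates and Zhu's linear volume bound. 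To repair your proof you would either have to supply a genuine argument that $\nabla_{\nabla f}Ric=Ric\circ(Ric-\tfrac12 g)$ pointwise (which is essentially equivalent to the rigidity being proved) or fall back on an integrated control of $K_{12}$ of the type the paper carries out.
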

Recently, inspired by Petersen-Wylie's work \cite{[PW2]}, Ou-Qu-Wu \cite{Ou-Qu-Wu}
gave a new proof of Cheng-Zhou's result by applying maximum principle to
$\Delta_f(\lambda_1+\lambda_2)$. In this section, we shall provide an alternative
proof based on the integration method repeatedly used in the above sections.
Compared to the proof of Theorem \ref{clasfi}, in the setting of $R=1$, we luckily
effectively control the intractable curvature term $K_{12}$.




In order to reprove Theorem \ref{ChengZh} by the integration method, we need some lemmas.
We first need a sharp estimate for the Weyl curvature by a simple algebraic inequality.
\begin{lemma}\label{1/12lemma}
For the Weyl curvature tensor in a $4$-dimensional manifold, we have
\[
W_{12}^2\leq \frac{1}{12}|W|^2.
\]
\end{lemma}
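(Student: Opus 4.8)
The plan is to sharpen the estimate behind Lemma \ref{eigineq} by actually using the linear relation among $W_{12},W_{13},W_{14}$ forced by tracelessness, instead of simply throwing away the cross terms. First I would record, exactly as in the proof of Lemma \ref{eigineq}, that since the Weyl tensor is totally traceless one has
\[
W_{12}+W_{13}+W_{14}=0,\qquad W_{12}=W_{34},\quad W_{13}=W_{24},\quad W_{14}=W_{23},
\]
and consequently, keeping only the ``diagonal'' components $W_{ijij}$ inside $|W|^2$ together with their eight-fold multiplicity coming from the curvature symmetries,
\[
|W|^2\ \ge\ 8\big(W_{12}^2+W_{13}^2+W_{14}^2\big),\qquad\text{i.e.}\qquad W_{12}^2+W_{13}^2+W_{14}^2\le\tfrac18|W|^2.
\]

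The new input is to control $W_{12}^2$ by $W_{12}^2+W_{13}^2+W_{14}^2$ using $W_{13}+W_{14}=-W_{12}$. By the Cauchy--Schwarz inequality $(W_{13}+W_{14})^2\le 2(W_{13}^2+W_{14}^2)$, so
\[
W_{13}^2+W_{14}^2\ \ge\ \tfrac12\,(W_{13}+W_{14})^2\ =\ \tfrac12\,W_{12}^2,
\]
which gives $W_{12}^2+W_{13}^2+W_{14}^2\ge\tfrac32\,W_{12}^2$, equivalently $W_{12}^2\le\tfrac23\big(W_{12}^2+W_{13}^2+W_{14}^2\big)$. Combining this with the bound from the previous step yields
\[
W_{12}^2\ \le\ \tfrac23\cdot\tfrac18\,|W|^2\ =\ \tfrac1{12}\,|W|^2,
\]
which is the assertion.

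I do not expect any serious obstacle here: the argument is just the constraint $W_{13}+W_{14}=-W_{12}$ fed into Cauchy--Schwarz and then into the already-established inequality $W_{12}^2+W_{13}^2+W_{14}^2\le\tfrac18|W|^2$. The only points deserving a little care are the bookkeeping of the multiplicity of the components $W_{ijij}$ inside $|W|^2$ (the factor $8$, already used in Lemma \ref{eigineq}) and the observation that equality forces $W_{13}=W_{14}$ along with the vanishing of all genuinely off-diagonal components of $W$ in the frame $\{e_i\}$; this configuration occurs on $\mathbb{R}^2\times\mathbb{S}^2$, so the constant $\tfrac1{12}$ is sharp, which is exactly what is needed for the subsequent application.
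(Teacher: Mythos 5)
Your proposal is correct and follows essentially the same route as the paper: the relation $W_{13}+W_{14}=-W_{12}$ fed into Cauchy--Schwarz to get $W_{12}^2\le\tfrac23(W_{12}^2+W_{13}^2+W_{14}^2)$, combined with the bound $W_{12}^2+W_{13}^2+W_{14}^2\le\tfrac18|W|^2$ from Lemma \ref{eigineq}. No issues.
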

\begin{proof}[Proof of Lemma \ref{1/12lemma}]
Since $W_{12}+W_{13}+W_{14}=0$, by the Cauchy-Schwarz inequality, we have
\begin{align*}
W^2_{12}+W^2_{13}+W^2_{14}&\ge W^2_{12}+\frac 12(W_{13}+W_{14})^2\\
&=W^2_{12}+\frac 12W^2_{12}=\frac 32W^2_{12}.
\end{align*}
This implies
\[
W_{12}^2\leq \frac{2}{3}(W_{12}^2+W_{13}^2+W_{14}^2)
\leq \frac{2}{3}\times \frac{1}{8}|W|^2=\frac{1}{12}|W|^2,
\]
where we used Lemma \ref{eigineq} in the second inequality.
\end{proof}

From the preceding proof, we know that the integration argument requires the
nonnegativity of Ricci curvature. This nonnegativity can be achieved by the
scalar curvature assumption.
\begin{lemma}[\cite{[FR]}]
Let $(M, g, f)$ be a $4$-dimensional complete noncompact shrinker with $R=1$. Then $Ric\geq 0$.
\end{lemma}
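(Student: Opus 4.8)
The plan is to reduce the statement to an elementary inequality on the eigenvalues of the Ricci tensor, exploiting only the two pointwise identities that constant scalar curvature produces. First I would note that $R\equiv 1$ together with \eqref{Sequat} gives $0=\Delta_f R=R-2|Ric|^2$, hence $|Ric|^2\equiv\tfrac12$ on $M$; and the identity $\nabla R=2Ric(\nabla f)$ gives $Ric(\nabla f)\equiv 0$. Consequently, on the open set $\Omega:=\{x\in M:\nabla f(x)\neq 0\}$ the unit vector $e:=\nabla f/|\nabla f|$ is an eigenvector of $Ric$ with eigenvalue $0$, so $Ric$ leaves the orthogonal complement $e^{\perp}$ invariant.

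Next I would run the algebra pointwise on $\Omega$. Fixing $x\in\Omega$ and letting $\mu_1,\mu_2,\mu_3$ be the eigenvalues of $Ric_x$ restricted to $e^{\perp}$, the four eigenvalues of $Ric_x$ are $0,\mu_1,\mu_2,\mu_3$, and taking the trace and the squared norm yields
\[
\mu_1+\mu_2+\mu_3=R=1,\qquad \mu_1^2+\mu_2^2+\mu_3^2=|Ric|^2=\tfrac12 .
\]
I claim these two equations force each $\mu_i\geq0$: if, say, $\mu_1=-a$ with $a>0$ were the smallest, then $\mu_2+\mu_3=1+a$ and $\mu_2^2+\mu_3^2=\tfrac12-a^2$, and the reality constraint $2(\mu_2^2+\mu_3^2)\geq(\mu_2+\mu_3)^2$ becomes $1-2a^2\geq(1+a)^2$, i.e. $a(3a+2)\leq 0$, impossible for $a>0$. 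Hence $Ric_x\geq 0$ for all $x\in\Omega$.

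Finally I would pass to all of $M$ by density and continuity. The set $\Omega$ is dense: if $\nabla f$ vanished on a nonempty open set, then \eqref{equat} (with $R=1$) reads $|\nabla f|^2=f-1$, so $f\equiv 1$ there, whence $Ric=\tfrac12 g-\mathrm{Hess}\,f=\tfrac12 g$ and $R=2$, a contradiction. Since $\Omega$ is open and dense and $Ric\geq 0$ on $\Omega$, continuity of $Ric$ gives $Ric\geq 0$ on $M$. There is no serious obstacle here; the only point needing care is the bookkeeping that $e$ is genuinely an eigenvector of $Ric$ (so that $e^{\perp}$ is $Ric$-invariant and the two trace identities hold verbatim), together with the elementary observation that the critical set of $f$ has empty interior. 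This is essentially the argument of \cite{[FR]}.
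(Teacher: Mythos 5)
Your argument is correct, and note that the paper itself offers no proof of this lemma — it is imported from \cite{[FR]} — so there is nothing internal to compare against; your derivation is essentially the argument of that reference. The two identities you extract from constancy of $R$ (namely $|Ric|^2=\tfrac12$ from \eqref{Sequat} and $Ric(\nabla f)=0$ from $\nabla R=2Ric(\nabla f)$) are exactly the right input, the Cauchy--Schwarz constraint $2(\mu_2^2+\mu_3^2)\ge(\mu_2+\mu_3)^2$ does rule out a negative eigenvalue on $e^{\perp}$ (the inequality $a(3a+2)\le 0$ is indeed impossible for $a>0$), and the passage from the dense open set $\{\nabla f\neq 0\}$ to all of $M$ via the empty interior of the critical set and continuity of $Ric$ is handled correctly.
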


We are now in position to prove Theorem \ref{ChengZh}.

\begin{proof}[Proof of Theorem \ref{ChengZh}]
Adopting the preceding notations and formulas, recall that
\begin{equation}
\begin{aligned}\label{kaishide}
\Delta_f (\lambda_1+\lambda_2)&\leq (\lambda_1+\lambda_2)-(\lambda_1+\lambda_2)(\lambda_3+\lambda_4)\\
&\quad+2K_{12}(\lambda_3+\lambda_4-\lambda_1-\lambda_2)+(\lambda_3+\lambda_4)^2
-2(\lambda_3^2+\lambda_4^2)
\end{aligned}
\end{equation}
in the barrier sense. By the curvature decomposition formula, we have
\[
K_{12}=\frac{1}{2}(\lambda_1+\lambda_2)-\frac{1}{6}+W_{12}.
\]
Applying it and the identity $2|Ric|^2=R=1$ gotten by \eqref{Sequat},
we compute that
\begin{align*}
\Delta_f (\lambda_1+\lambda_2)&\leq
(\lambda_1+\lambda_2)-(\lambda_1+\lambda_2)(1-\lambda_1-\lambda_2)+(\lambda_1+\lambda_2-\tfrac{1}{3}+2W_{12})\\
&\quad-4K_{12}(\lambda_1+\lambda_2)+(1-\lambda_1-\lambda_2)^2-2(\tfrac{1}{2}-\lambda_1^2-\lambda_2^2)\\
&=-(\lambda_1+\lambda_2)-\tfrac{1}{3}+2W_{12}-4K_{12}(\lambda_1+\lambda_2)+4\lambda_1\lambda_2\\
&=-(\lambda_1+\lambda_2)-\tfrac{1}{3}+2W_{12}-4K_{12}(\lambda_1+\lambda_2)
\end{align*}
in the barrier sense. where in the last line we used $\lambda_1=R_{11}=0$
because $R=1$, $\nabla R=2Ric(\nabla f)$ and $e_1=\frac{\nabla f}{|\nabla f|}$.
Integrating the above inequality over $D(a, b)$,
\begin{equation}\label{eigenscal}
\int_{D(a, b)}\Delta_f (\lambda_1+\lambda_2)
\leq \int_{D(a, b)}-(\lambda_1+\lambda_2)-\tfrac{1}{3}+2W_{12}-4K_{12}(\lambda_1+\lambda_2).
\end{equation}

Notice that on a $4$-dimensional simply connected shrinker with $R=1$, the zero
set $f^{-1}(0)$ is a $2$-dimensional simply connected closed manifold. Actually,
$f^{-1}(0)$ is the deformation contraction of $M$, and hence it is diffeomorphic
to $\mathbb{S}^2$. Because
$f(x)=\frac{1}{4}d(x, f^{-1}(0))^2$, $f=|\nabla f|^2$ and the exponential map is
a local diffeomorphism, it is easy to see that $f^{-1}(t)$ is diffeomorphic to
$\mathbb{S}^2\times \mathbb{S}^1$ when $t$ is small. Thus all level sets of $f$
are diffeomorphic to $\mathbb{S}^2\times \mathbb{S}^1$ since $f$ has no critical
point away from $f^{-1}(0)$. All the above discussion implies that $D(a, b)$ is
diffeomorphic to $S^2\times S^1\times (a, b)$, hence $\chi(D(a, b), \Sigma(a)\cup \Sigma(b))=0$.

In the following we shall estimate the right hand side of \eqref{eigenscal}.
For this purpose, we begin with the following two claims.

\vspace{0.5em}

\textbf{Claim 3}: For sufficiently large $a$ and $b$, we have
\[
\int_{D(a, b)}W_{12}\leq \frac{1}{6}Vol(D(a, b))+\frac{1}{a}\int_{\Sigma(a)} O(\lambda_2)-\frac{1}{b}\int_{\Sigma(b)} O(\lambda_2).
\]
We now prove this claim. By Lemma \ref{1/12lemma} and Gauss-Bonnet-Chern formula
with boundary \eqref{Gauss-Bonnet-Chern}, we have
\begin{align*}
&\int_{D(a, b)}W_{12}\\
&\le \left(\int_{D(a, b)}W_{12}^2\right)^\frac{1}{2} Vol(D(a, b))^\frac{1}{2}\\
&\le\left(\int_{D(a, b)}\tfrac{|W|^2}{12}\right)^\frac{1}{2} Vol(D(a, b))^\frac{1}{2}\\
&=\left(\tfrac{1}{6}\int_{D(a, b)}(|Ric|^2-\tfrac{1}{3}R^2)+\tfrac{2}{3}\int_{\Sigma(a)}\digamma -\tfrac{2}{3}\int_{\Sigma(b)}\digamma\right)^\frac{1}{2} Vol(D(a, b))^\frac{1}{2}\\
&=\left(\tfrac{1}{36}Vol(D(a, b))+\tfrac{2}{3}\int_{\Sigma(a)}\digamma -\tfrac{2}{3}\int_{\Sigma(b)}\digamma   \right)^\frac{1}{2}Vol(D(a, b))^\frac{1}{2}\\
&=\left\{\left(\tfrac{1}{36}Vol(D(a, b))\right)^\frac{1}{2}+\frac{\frac{2}{3}\int_{\Sigma(a)}\digamma -\frac{2}{3}\int_{\Sigma(b)}\digamma }{\left(\frac{1}{36}Vol(D(a, b)){+}\frac{2}{3}\int_{\Sigma(a)}\digamma {-}\frac{2}{3}\int_{\Sigma(b)}\digamma\right)^\frac{1}{2}{+}\left(\frac{1}{36}Vol(D(a, b))\right)^\frac{1}{2}}\right\}\\
&\quad\times Vol(D(a, b))^\frac{1}{2}\\
&=\frac{1}{6}Vol(D(a, b))+\frac{\left(\frac{2}{3}\int_{\Sigma(a)}\digamma -\frac{2}{3}\int_{\Sigma(b)}\digamma\right)\times Vol(D(a, b))^\frac{1}{2}}{\left(\frac{1}{36}Vol(D(a, b)){+}\frac{2}{3}\int_{\Sigma(a)}\digamma {-}\frac{2}{3}\int_{\Sigma(b)}\digamma\right)^\frac{1}{2}{+}\left(\frac{1}{36}Vol(D(a, b))\right)^\frac{1}{2}},
\end{align*}
where
\[
\digamma=\tfrac{1}{2}R H-R_{11}H-R_{kikj}L^{ij}+\tfrac{1}{3}H^3-H |L|^2+\tfrac{2}{3}tr L^3.
\]
We remark that in the first equality we used $\chi(D(a, b), \Sigma(a)\cup \Sigma(b))=0$;
in the second equality we used $2|Ric|^2=R=1$.

Now we want to estimate the above complicated formula. To achieve it, we need
to deal with the quantity $\digamma$. We start to estimate the term
$R_{kikj}L^{ij}$. Notice that
\begin{equation}\label{Lij}
L_{ij}=\langle \nabla_{e_i}\tfrac{\nabla f}{|\nabla f|}, e_j\rangle =\frac{\nabla^2 f(e_i, e_j)}{|\nabla f|}=\frac{\frac{1}{2}g_{ij}-R_{ij}}{|\nabla f|},
\end{equation}

\begin{equation}\label{Lijtr}
H=g^{ij}L_{ij}=\frac{\tfrac{3}{2}-R}{|\nabla f|}=\frac{1}{2|\nabla f|},
\end{equation}

\begin{equation}
\begin{aligned}\label{RijLij}
R_{kikj}L^{ij}&=(R_{ij}-R_{1i1j})\frac{\frac{1}{2}g_{ij}-R_{ij}}{|\nabla f|}\\
&=\left(\tfrac{1}{2}R-|Ric|^2-\tfrac{1}{2}R_{11}+R_{1i1j}R_{ij}\right)\tfrac{1}{|\nabla f|}\\
&=\frac{R_{1i1j}R_{ij}}{|\nabla f|},
\end{aligned}
\end{equation}
where we used $2|Ric|^2=R=1$ and $R_{11}=0$. On a shrinker
with constant scalar curvature, we have (see Proposition 1 in \cite{[PW]})
\begin{align}\label{changshuliangqulvzhongyaodedengshi}
\nabla_{\nabla f}Ric=Ric\circ(Ric-\tfrac{1}{2}g)+Rm(\nabla f, \cdot, \nabla f, \cdot),
\end{align}
which gives that
\begin{equation}
\begin{aligned}\label{RijklRij}
R_{1i1j}R_{ij}&=\tfrac{1}{|\nabla f|^2}\left(\nabla_{\nabla f}R_{ij}\cdot R_{ij} -\lambda_i(\lambda_i-\tfrac{1}{2})\lambda_i  \right)\\
&=\tfrac{1}{|\nabla f|^2}\left(\tfrac{1}{2} \nabla_{\nabla f}|Ric|^2-(\lambda_i-\tfrac{1}{2})^2\lambda_i+\tfrac{1}{4}R-\tfrac{1}{2}|Ric|^2\right)\\
&=-\tfrac{1}{|\nabla f|^2}(\lambda_i-\tfrac{1}{2})^2\lambda_i,
\end{aligned}
\end{equation}
where we also used  $2|Ric|^2=R=1$. We observe that
\begin{align*}
0&=\tfrac{1}{2}\Delta_f R\\
&=\sum_{i=1}^4\lambda_i(\tfrac{1}{2}-\lambda_i)\\
&=\sum_{i=3}^4\left((\lambda_i-\tfrac{1}{2})(\tfrac{1}{2}-\lambda_i)
+\tfrac{1}{2}(\tfrac{1}{2}-\lambda_i)\right)
+\lambda_2(\tfrac{1}{2}-\lambda_2)\\
&=-(\tfrac{1}{2}-\lambda_3)^2-(\tfrac{1}{2}-\lambda_4)^2
+\frac{1}{2}(1-\lambda_3-\lambda_4)+\lambda_2(\tfrac{1}{2}-\lambda_2)\\
&=-(\tfrac{1}{2}-\lambda_3)^2-(\tfrac{1}{2}-\lambda_4)^2+\lambda_2(1-\lambda_2),
\end{align*}
which gives that
\begin{equation}\label{eigenequa}
(\tfrac{1}{2}-\lambda_3)^2+(\tfrac{1}{2}-\lambda_4)^2=\lambda_2(1-\lambda_2).
\end{equation}
Substituting \eqref{RijklRij} and \eqref{eigenequa} into \eqref{RijLij}, we have
\begin{equation}\label{RLO}
R_{kikj}L^{ij}=\frac{R_{1i1j}R_{ij}}{|\nabla f|}
=\frac{1}{|\nabla f|^3}\sum_{i=2}^4(\lambda_i-\tfrac{1}{2})^2\lambda_i
=\frac{O(\lambda_2)}{|\nabla f|^3}.
\end{equation}

Then we estimate $\frac{1}{3}H^3-H |L|^2 +\frac{2}{3}tr L^3$.
By \eqref{Lij}, \eqref{Lijtr} and \eqref{eigenequa}, we have
\begin{equation}
\begin{aligned}\label{HLH}
\tfrac{1}{3}H^3-H |L|^2+\tfrac{2}{3}tr L^3
&=\frac{1}{24|\nabla f|^3}-\frac{1}{2|\nabla f|}\sum_{i=2}^4\left(\frac{\frac{1}{2}-\lambda_i}{|\nabla f|}\right)^2+\tfrac{2}{3}\sum_{i=2}^4\left(\frac{\frac{1}{2}-\lambda_i}{|\nabla f|}\right)^3\\
&=\frac{1}{|\nabla f|^3}\Big[\tfrac{1}{24}-\tfrac{1}{2}(\tfrac{1}{2}-\lambda_2)^2-\tfrac{1}{2}(\tfrac{1}{2}-\lambda_3)^2
-\tfrac{1}{2}(\tfrac{1}{2}-\lambda_4)^2\\ &\qquad\qquad\quad+\tfrac{2}{3}(\tfrac{1}{2}-\lambda_2)^3+\tfrac{2}{3}(\tfrac{1}{2}-\lambda_3)^3
+\tfrac{2}{3}(\tfrac{1}{2}-\lambda_4)^3\Big]\\
&=\frac{1}{|\nabla f|^3}\Big[\tfrac{1}{24}-\tfrac{1}{2}\cdot\tfrac{1}{4}+ \tfrac{2}{3}\cdot\tfrac{1}{8}+O(\lambda_2)\Big]\\
&=\frac{O(\lambda_2)}{|\nabla f|^3},
\end{aligned}
\end{equation}
where we used $(\tfrac{1}{2}-\lambda_3)^3\le(\tfrac{1}{2}-\lambda_3)^2$
and $(\tfrac{1}{2}-\lambda_4)^3\le(\tfrac{1}{2}-\lambda_4)^2$.
Combining \eqref{Lijtr}, \eqref{RLO} and \eqref{HLH}, we get
\[
\digamma=\frac{1}{4|\nabla f|}+\frac{O(\lambda_2)}{|\nabla f|^3}.
\]
Therefore,
\begin{align*}
\tfrac{2}{3}\int_{\Sigma(a)}\digamma -\tfrac{2}{3}\int_{\Sigma(b)}\digamma
&=\tfrac{2}{3}Vol(\Sigma(a))\tfrac{1}{4|\nabla f|}-\tfrac{2}{3}Vol(\Sigma(b))\tfrac{1}{4|\nabla f|}\\
&\quad +\tfrac{1}{a}\int_{\Sigma(a)} O(\lambda_2)-\tfrac{1}{b}\int_{\Sigma(b)} O(\lambda_2)\\
&=\tfrac{1}{a}\int_{\Sigma(a)} O(\lambda_2)-\tfrac{1}{b}\int_{\Sigma(b)} O(\lambda_2).
\end{align*}
We finally obtain
\begin{align*}
\int_{D(a, b)}W_{12}&=\tfrac{1}{6}Vol(D(a, b))\\
&\quad+\frac{\left(\frac{1}{a}\int_{\Sigma(a)} O(\lambda_2)-\frac{1}{b}\int_{\Sigma(b)} O(\lambda_2)\right) \times Vol(D(a, b))^\frac{1}{2}}{\left(\frac{1}{36}Vol(D(a, b))+\frac{1}{a}\int_{\Sigma(a)} O(\lambda_2)-\frac{1}{b}\int_{\Sigma(b)} O(\lambda_2)   \right)^\frac{1}{2}+\left(\frac{1}{36}Vol(D(a, b))\right)^\frac{1}{2}}\\
&\leq \tfrac{1}{6}Vol(D(a, b))+\tfrac{1}{a}\int_{\Sigma(a)} O(\lambda_2)-\tfrac{1}{b}\int_{\Sigma(b)} O(\lambda_2).
\end{align*}
Therefore Claim 3 follows.

\vspace{0.5em}

\textbf{Claim 4}: $K_{12}\to 0$  uniformly at infinity.

This claim can be easily derived from \eqref{changshuliangqulvzhongyaodedengshi}.
Indeed, from \eqref{changshuliangqulvzhongyaodedengshi}
we get that
\begin{align*}
2 K_{12}=\frac{1}{f}\left(2\nabla_{\nabla f} Ric(e_2, e_2)+\lambda_2-2\lambda_2^2\right).
\end{align*}
Since $\nabla_{\nabla f} Ric$ and $\lambda_2$ tends to zero uniformly at infinity,
then
\[
K_{12}=\frac{o(1)}{f},
\]
which gives $K_{12}\to 0$ uniformly at infinity and Claim 4 follows.

\

Now plugging Claims 3 and 4 into \eqref{eigenscal}, we have
\begin{align*}
&\int_{D(a, b)}\Delta_f (\lambda_1+\lambda_2)\\
&\leq \int_{D(a, b)}-0.9(\lambda_1+\lambda_2)-\frac{1}{3}+2W_{12}\\
&\leq \int_{D(a, b)}-0.9(\lambda_1+\lambda_2) -\frac{1}{3}Vol(D(a, b))+\frac{2}{6}Vol(D(a, b)+\frac{1}{a}\int_{\Sigma(a)} O(\lambda_2)-\frac{1}{b}\int_{\Sigma(b)} O(\lambda_2)\\
&=\int_{D(a, b)}-0.9(\lambda_1+\lambda_2) +\frac{1}{a}\int_{\Sigma(a)} O(\lambda_2)-\frac{1}{b}\int_{\Sigma(b)} O(\lambda_2),
\end{align*}
Letting $b\rightarrow\infty$, we hence get
\begin{align}\label{diyigebudengshi}
\lim_{b\rightarrow\infty}\int_{D(a, b)}\Delta_f (\lambda_1+\lambda_2) \leq\int_{M\setminus D(a)}-0.9(\lambda_1+\lambda_2) +\frac{1}{a}\int_{\Sigma(a)} O(\lambda_2).
\end{align}

In the following, we shall estimate the left hand side of \eqref{diyigebudengshi}.
Since $\lambda_1+\lambda_2$ is continuous and Lipschitz, by Remark \ref{inquiinte}
and Proposition \ref{keyprop3} in Appendix, we have
\begin{equation}
\begin{aligned}\label{Deltaf}
\int_{D(a, b)}\Delta_f (\lambda_1+\lambda_2)
&=\int_{\Sigma(b)}\langle \nabla(\lambda_1+\lambda_2), \tfrac{\nabla f}{|\nabla f|}\rangle -\int_{\Sigma(a)}\langle \nabla(\lambda_1+\lambda_2), \tfrac{\nabla f}{|\nabla f|}\rangle\\
&\quad-\int_{\Sigma(b)}(\lambda_1+\lambda_2)|\nabla f|+\int_{\Sigma(a)}(\lambda_1+\lambda_2)|\nabla f|+\int_{D(a, b)}(2-R)(\lambda_1+\lambda_2)\\
&=\int_{\Sigma(b)}\langle \nabla(\lambda_1+\lambda_2), \tfrac{\nabla f}{|\nabla f|}\rangle -\int_{\Sigma(a)}\langle \nabla(\lambda_1+\lambda_2), \tfrac{\nabla f}{|\nabla f|}\rangle\\
&\quad-\int_{\Sigma(b)}(\lambda_1+\lambda_2)|\nabla f|+\int_{\Sigma(a)}(\lambda_1+\lambda_2)|\nabla f|+\int_{D(a, b)}(\lambda_1+\lambda_2)
\end{aligned}
\end{equation}
holds for almost everywhere sufficiently large $a$ and $b$,
where we used $R=1$ in the last equality. In the following,
we will estimate the right hand side of \eqref{Deltaf}.

On one hand, since $(M^4, g, q_i)$ converges to $\mathbb{R}\times (\mathbb{R}\times \mathbb{S}^2)/\Gamma$, it is easy to derive that $\Delta Ric$ and $Ric-2Rm*Ric$ tends to zero uniformly at infinity. Then we get that $\nabla_{\nabla f}Ric$ tends to zero uniformly. Thus,
\begin{equation}\label{gradlam}
\left|\int_{\Sigma(b)}\langle \nabla(\lambda_1+\lambda_2), \tfrac{\nabla f}{|\nabla f|}\rangle\right|
\leq |\nabla_{\nabla f}Ric|\cdot \frac{1}{|\nabla f|}\cdot Vol(\Sigma(b))\rightarrow 0
\end{equation}
as $b\rightarrow \infty$.

On the other hand, we shall show that there exists $b_i\rightarrow \infty$ such that
\begin{equation}\label{siglam}
\int_{\Sigma(b_i)}(\lambda_1+\lambda_2)|\nabla f|\rightarrow 0
\end{equation}
as $i\rightarrow \infty$. To prove \eqref{siglam}, we observe that $\lambda_1+\lambda_2\rightarrow 0$ and
 $K_{12}\rightarrow 0$ at infinity. Also, $Ric\ge 0$ and $2|Ric|^2=R=1$. Using these information,
\eqref{kaishide} can be reduced to
\begin{equation}
\begin{aligned}\label{tezhengzhidehe}
\Delta_f(\lambda_1+\lambda_2)&\le(\lambda_1+\lambda_2)
-(\lambda_1+\lambda_2)[1-(\lambda_1+\lambda_2)]
+2K_{12}[1-2(\lambda_1+\lambda_2)]\\
&\quad+[1-(\lambda_1+\lambda_2)]^2-2[\tfrac 12-(\lambda^2_1+\lambda^2_2)]\\
&=-(\lambda_1+\lambda_2)+2 K_{12}\\
&\quad+[-(\lambda_1+\lambda_2)+2(\lambda_1+\lambda_2)^2+2(\lambda^2_1+\lambda^2_2)-4K_{12}(\lambda_1+\lambda_2)]\\
&\le-(\lambda_1+\lambda_2)+2 K_{12}
\end{aligned}
\end{equation}
in the barrier sense, on $M\setminus D(a)$ for some $a>0$.  Arguing similarly as in the proof of Theorem \ref{clasfi},
we can obtain a sequence $c_i\rightarrow \infty$ such that
\[
\int_{M \setminus D(c_i)}\Delta_f(\lambda_1+\lambda_2) e^{-f}
=-\int_{\Sigma(c_i)}\langle \nabla(\lambda_1+\lambda_2), \tfrac{\nabla f}{|\nabla f|}\rangle e^{-f}\geq 0.
\]
Integrating \eqref{tezhengzhidehe} over $M\setminus D(c_i)$ with respect to the weighted
measure $e^{-f}dv$ and using the above formula, we have
\begin{align*}
\int_{M \setminus D(c_i)}(\lambda_1+\lambda_2) e^{-f}&\leq \int_{M \setminus D(c_i)} 2K_{12} e^{-f}-\int_{M \setminus D(c_i)}\Delta_f   (\lambda_1+\lambda_2) e^{-f}\\
&\leq \int_{M \setminus D(c_i)} 2K_{12} e^{-f}.
\end{align*}
By the coarea formula to the above inequality, we have
\begin{align*}
\int_{c_i}^\infty \int_{\Sigma(s)}\frac{\lambda_1+\lambda_2}{|\nabla f|}e^{-f}\leq \int_{c_i}^\infty \int_{\Sigma(s)}\frac{2 K_{12}}{|\nabla f|}e^{-f},
\end{align*}
which implies that there exist $b_i\rightarrow\infty$ such that
\begin{align*}
\int_{\Sigma(b_i)}\frac{\lambda_1+\lambda_2}{|\nabla f|}e^{-f}\leq  \int_{\Sigma(b_i)}\frac{2 K_{12}}{|\nabla f|}e^{-f},
\end{align*}
namely,
\begin{align*}
\int_{\Sigma(b_i)}\frac{\lambda_1+\lambda_2}{|\nabla f|}\leq  \int_{\Sigma(b_i)}\frac{2 K_{12}}{|\nabla f|}.
\end{align*}
By multiplying $|\nabla f|^2$ (where $|\nabla f|^2=f-1$) on $\Sigma(b_i)$ on
both sides of the above formula, the above inequality is equivalent to
\begin{align*}
\int_{\Sigma(b_i)}(\lambda_1+\lambda_2)|\nabla f|\leq \int_{\Sigma(b_i)} 2K_{12}|\nabla f|.
\end{align*}
From the argument of the above Claim 4, we know that $K_{12}=\frac{o(1)}{f}$. Thus,
\begin{align*}
\int_{\Sigma(b_i)}(\lambda_1+\lambda_2)|\nabla f|&\leq \frac{o(1)}{b_i}\sqrt{b_i}Vol(\Sigma(b_i))\\
&\leq \frac{o(1)}{b_i}\sqrt{b_i}\cdot\sqrt{b_i}\rightarrow 0
\end{align*}
as $i\rightarrow\infty$. Therefore \eqref{siglam} follows.

Besides, we can also argue as Theorem \ref{clasfi} to find $a$ large enough such that
\begin{equation}\label{ineteineq}
-\int_{\Sigma(a)}\langle \nabla(\lambda_1+\lambda_2), \tfrac{\nabla f}{|\nabla f|}\rangle\geq 0.
\end{equation}

Now letting $b_i\rightarrow\infty$ in \eqref{Deltaf} and using
\eqref{gradlam}, \eqref{siglam} and  \eqref{ineteineq}, we finally get
\[
\lim_{b_i\rightarrow\infty}\int_{D(a, b_i)}\Delta_f (\lambda_1+\lambda_2)
\geq \int_{\Sigma(a)}(\lambda_1+\lambda_2)|\nabla f|+\int_{M\setminus D(a)}(\lambda_1+\lambda_2).
\]
Combining this  with \eqref{diyigebudengshi}, we have
\[
\int_{\Sigma(a)}(\lambda_1+\lambda_2)|\nabla f|+\int_{M\setminus D(a)}(\lambda_1+\lambda_2) \leq\int_{M\setminus D(a)}-0.9(\lambda_1+\lambda_2) +\tfrac{1}{a}\int_{\Sigma(a)} O(\lambda_2).
\]
Since the above inequality holds for sufficiently large $a$, this actually implies
\[
\lambda_1+\lambda_2=0
\]
on $M\setminus D(a)$. This implies that
\[
\lambda_3=\lambda_4\equiv \tfrac{1}{2}
\]
on $M\setminus D(a)$. So the function
\[
G:=tr(Ric^3)-\tfrac{1}{2}|Ric|^2=0
\]
on $M\setminus D(a)$. Since $G$ is an analytic function, we hence obtain that
$G\equiv 0$ on $M$. On the other hand, we observe that
\begin{align*}
G&=tr(Ric^3)-|Ric|^2+\tfrac{1}{4}R\\
&=\sum_{i=1}^4(\lambda_i-\tfrac{1}{2})^2 \lambda_i,
\end{align*}
where we used $R=1$ and $0=\Delta_f R=R-2|Ric|^2$. Thus we get
$\lambda_1=\lambda_2\equiv 0$ and $\lambda_3=\lambda_4 \equiv \frac{1}{2}$
due to $Ric\geq 0$ and the continuity of $\lambda_1+\lambda_2$.
That is to say, the Ricci curvature has constant rank $2$. Therefore
the conclusion follows by \cite{[FR]}.
\end{proof}


\section{Appendix}
In this section, we give some technical propositions about the integration inequality,
which roughly say that classical divergence theorems still hold in the barrier setting.
These propositions are often used in the proof of our theorems.

Let $(M,g)$ be a complete manifold, $\nabla$ be the gradient operator on $M$,
$\Delta$ be the Laplacian on $M$, $\Omega$ be an open set of $M$ and
$F\in C^\infty(M)$. The $F$-Laplacian is defined by
\[
\Delta_F:=\Delta-\nabla F\cdot\nabla.
\]
We say that a continuous function $u\in C(\Omega)$ satisfies $\Delta_F u\le h$
for some $h\in C(\Omega)$ \emph{in the barrier sense}, if for any fixed $x\in \Omega$,
there exists a smooth function $v$ defined in a neighborhood $U(x)$ of $x$,
such that $u(x)=v(x)$, $u(y)\le v(y)$ for any $y\in U(x)$ and
\[
\Delta_F v(x)\le h(x).
\]
We say that $u\in C(\Omega)$ satisfies $\Delta_F u\le h$ on $\Omega$
\emph{in the sense of distribution}, if
\[
\int_{\Omega} u\Delta_F\phi\, e^{-F}\leq \int_{\Omega} h \phi\, e^{-F}
\]
for any $\phi\geq 0$ with $\phi\in C_c^\infty(\Omega)$.

Using Ishii's theorem in \cite{Ishii}, we can show that
\begin{theorem}\label{bard}
If $u\in C(\Omega)$ satisfies $\Delta_F u\le h$ for some $h\in C(\Omega)$
in the barrier sense, then it satisfies $\Delta_F u\le h$ on $\Omega$ in
the sense of distribution.
\end{theorem}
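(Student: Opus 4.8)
The plan is to route the argument through the notion of viscosity subsolution and then invoke the relevant half of Ishii's equivalence between viscosity and distributional weak solutions. Since both the barrier hypothesis and the distributional conclusion are local statements, I would fix a point $x_0\in\Omega$ and work in a small coordinate ball around it, in which $\Delta_F$ becomes a uniformly elliptic second-order operator with smooth coefficients and $h$ a continuous right-hand side.

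First I would show that the barrier hypothesis is already the viscosity subsolution property: for every $x_0\in\Omega$ and every $\varphi\in C^2$ defined near $x_0$ such that $u-\varphi$ attains a local minimum at $x_0$, one has $\Delta_F\varphi(x_0)\le h(x_0)$. Indeed, let $v$ be the smooth upper barrier at $x_0$ furnished by the hypothesis, so $v\ge u$ near $x_0$, $v(x_0)=u(x_0)$, and $\Delta_F v(x_0)\le h(x_0)$. Then $\varphi\le u\le v$ in a neighbourhood of $x_0$ with equality at $x_0$, hence $\varphi-v$ has a local maximum at $x_0$; consequently $\nabla\varphi(x_0)=\nabla v(x_0)$ and $\mathrm{Hess}\,\varphi(x_0)\le\mathrm{Hess}\,v(x_0)$ as bilinear forms, and taking traces (the drift terms agreeing) gives $\Delta_F\varphi(x_0)\le\Delta_F v(x_0)\le h(x_0)$. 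The mechanism is simply that the upper barrier traps any lower test function at the contact point, so only the second-order jet of $v$ at $x_0$ is used; this step works for any continuous $u$ with the barrier property and requires nothing about its structure.

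Second, with the viscosity subsolution property in hand, I would apply Ishii's theorem \cite{Ishii} in the chart — the implication that a viscosity subsolution of a uniformly elliptic linear equation with continuous coefficients is a distributional subsolution — to obtain $\int u\,\Delta_F\psi\, e^{-F}\le\int h\psi\, e^{-F}$ for all $0\le\psi\in C_c^\infty$ supported in the ball. Since this inequality is local, a partition of unity extends it to all $0\le\psi\in C_c^\infty(\Omega)$, which is exactly the distributional formulation defined above. I expect the only points requiring genuine care to be bookkeeping rather than substance: checking that the symmetry of $\Delta_F$ with respect to the weighted measure $e^{-F}dv$ reproduces precisely the weighted distributional inequality stated in the paper (so that signs line up), stating the touching direction in the first step correctly, and verifying that Ishii's Euclidean result transfers to the chart, which it does because the metric coefficients $g^{ij}$ are smooth and uniformly elliptic on a sufficiently small ball. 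The only genuinely new content is the elementary first step; the analytic heavy lifting is delegated to Ishii.
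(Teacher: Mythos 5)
Your argument is correct, and it reaches the distributional inequality by a reduction that is organized differently from the paper's. You first upgrade the barrier hypothesis to the full viscosity property for $\Delta_F$ itself --- the touching argument $\varphi\le u\le v$ with equality at the contact point, giving $\nabla\varphi(x_0)=\nabla v(x_0)$ and $\mathrm{Hess}\,\varphi(x_0)\le\mathrm{Hess}\,v(x_0)$, is exactly right --- and then invoke Ishii's equivalence for the drift operator $\Delta_F$ locally in charts, patching with a partition of unity. The paper instead performs the ground-state substitution $\overline u=e^{-F/2}u$, which converts $\Delta_F$ into the Schr\"odinger operator $\Delta-H$ with $H=\tfrac14|\nabla F|^2-\tfrac12\Delta F$, checks that the barrier property survives multiplication by the positive smooth factor $e^{-F/2}$, applies Ishii's Theorem 1 to $(\Delta-H)\overline u\le e^{-F/2}h$ globally, and then undoes the substitution via $\psi=e^{F/2}\phi$ to land on the weighted inequality $\int_\Omega u\,\Delta_F\psi\,e^{-F}\le\int_\Omega h\psi\,e^{-F}$. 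What the paper's transform buys is that the first-order term disappears, so the cited theorem is used for an operator in exact Schr\"odinger form; what your route buys is that no change of unknown is needed and the weighted self-adjointness of $\Delta_F$ is used directly, at the cost of having to confirm that Ishii's result covers uniformly elliptic operators with smooth drift coefficients (it does; note the paper's own application is already to a variable-coefficient operator on a manifold, so this is no extra leap) and that the unweighted distributional inequality in a chart converts into the weighted one (it does, since all derivatives fall on the smooth test function). Your explicit barrier-to-viscosity step is a genuine, if elementary, ingredient that the paper leaves implicit in its appeal to Ishii's criterion.
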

\begin{proof}[Proof of Theorem \ref{bard}]
For $\Psi\in C^\infty(\Omega)$, it is easy to check that
$\Delta_F \Psi\le h$ is equivalent to the Schr\"odinger equation
\[
(\Delta-H)\overline{\Psi}\le e^{-\frac{F}{2}}h,
\]
where $\overline{\Psi}:=e^{-\frac{F}{2}}\Psi$ and $H:=\frac 14|\nabla F|^2-\frac 12\Delta F$.

In our setting, if $\Delta_F u\le h$ on $\Omega$ in the barrier sense,
then for any fixed $x\in \Omega$, there exists a smooth function $v$ defined
in a neighborhood $U(x)$ of $x$, such that $u(x)=v(x)$, $u(y)\le v(y)$
for any $y\in U(x)$ and
\[
\Delta_F v(x)\le h(x).
\]
Using the above equivalent viewpoint, we have that smooth function
$\overline{v}:=e^{-\frac{F}{2}}v$ satisfies
\[
(\Delta-H)\overline{v}\le e^{-\frac{F}{2}}h
\]
at $x\in \Omega$, where $H:=\frac 14|\nabla F|^2-\frac 12\Delta F$.
Set $\overline{u}:=e^{-\frac{F}{2}}u$. Then we have
\[
\overline{u}(x)=e^{-\frac{F(x)}{2}}u(x)=e^{-\frac{F(x)}{2}}v(x)=\overline{v}(x)
\]
and
\[
\overline{u}(y)=e^{-\frac{F(y)}{2}}u(y)\le e^{-\frac{F(y)}{2}}v(y)=\overline{v}(y)
\]
for any $y\in U(x)$. This indicates that
\[
(\Delta-H)\overline{u}\le e^{-\frac{F}{2}}h
\]
on $\Omega$ in the barrier sense. By Ishii's criterion (see Theorem 1 in \cite{Ishii}),
we conclude that the above equation holds on $\Omega$ in the sense of distribution.
That is,
\[
\int_{\Omega} (\overline{u}\Delta \phi-H\overline{u}\phi)\le\int_{\Omega} e^{-\frac{F}{2}} h
\]
for any $\phi\geq 0$ with $\phi\in C_c^\infty(\Omega)$.
Since $\overline{u}=e^{-\frac{f}{2}}u$, then
\[
\int_{\Omega}(u\Delta \phi-Hu\phi)e^{-\frac{F}{2}}\le\int_{\Omega} h\phi e^{-\frac{F}{2}}
\]
for any $\phi\geq 0$ with $\phi\in C_c^\infty(\Omega)$. Finally,
letting $\psi=e^{\frac{F}{2}}\phi$, the above equation is reduced
to
\[
\int_{\Omega} u\Delta_F \psi e^{-F}\le \int_{\Omega} h \psi e^{-F}
\]
for any $\psi\geq 0$ with $\psi\in C_c^\infty(\Omega)$. The conclusion follows.
\end{proof}

Next we shall apply Theorem \ref{bard} to discuss some special cases for our
theorems in the preceding sections. We use the same notations as before. On
a $4$-dimensional shrinker $(M,g,f)$, let $u\in (M,g,f)$ be a continuous
and Lipschitz function and let $h\in (M,g,f)$ be a continuous integrable function
with respect to measure $e^{-f}dv$. Here $f$ is uniformly equivalent to the
distance function squared (see Theorem \ref{pote}). For a sufficiently large
$r$, $a$ and $b$ ($a<b$), set $D(r):=\{x\in M|f(x)\le r\}$,
$\Sigma(r):=\{x\in M|f(x)=r\}$ and $D(a,b):=\{x\in M|a\le f(x)\le b\}$.
Then Theorem \ref{bard} implies that
\begin{corollary}\label{bardis}
If $\Delta_f u\le h$ holds on $M\setminus D(r)$ in the barrier sense, then it
holds on $M\setminus D(r)$ in the sense of distribution.
\end{corollary}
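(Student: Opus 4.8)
The plan is to deduce this directly from Theorem \ref{bard} by choosing the open set $\Omega$ appropriately. First I would observe that since the potential function $f$ is smooth, the sublevel set $D(r)=\{x\in M\mid f(x)\le r\}$ is closed, so its complement $\Omega:=M\setminus D(r)=\{x\in M\mid f(x)>r\}$ is an open subset of $M$. Moreover, the weighted Laplacian $\Delta_f$ of the shrinker is precisely the operator $\Delta_F$ of the Appendix with the choice $F=f\in C^\infty(M)$, so the framework and hypotheses of Theorem \ref{bard} apply verbatim on $\Omega$.

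Next I would verify that the hypotheses of Theorem \ref{bard} are met on this $\Omega$: the function $u$, being continuous (indeed Lipschitz) on $M$, lies in $C(\Omega)$; the function $h$ is continuous on $M$, hence in $C(\Omega)$; and by assumption $\Delta_f u\le h$ holds on $M\setminus D(r)=\Omega$ in the barrier sense. Theorem \ref{bard} then yields that $\Delta_f u\le h$ holds on $\Omega$ in the sense of distribution, that is, $\int_{\Omega} u\,\Delta_f\psi\,e^{-f}\le\int_{\Omega} h\,\psi\,e^{-f}$ for every nonnegative $\psi\in C_c^\infty(\Omega)$, which is exactly the assertion of the corollary. (One should keep in mind that $h$ being integrable against $e^{-f}\,dv$ is what will later make the right-hand side finite when one integrates against fixed, non-compactly-supported weights on the noncompact region $\Omega$; but for the distributional inequality itself, only the local integrability guaranteed by continuity of $h$ is needed.)

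There is essentially no obstacle here: all the content is contained in Theorem \ref{bard}, and the only point to check is that $M\setminus D(r)$ is genuinely open, so that $C_c^\infty(M\setminus D(r))$ is the correct class of test functions. Since such test functions vanish in a neighborhood of the level set $\Sigma(r)$, no boundary term at $\Sigma(r)$ enters and no additional estimate near $\Sigma(r)$ is required; the corollary is thus an immediate specialization of Theorem \ref{bard}.
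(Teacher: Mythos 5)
Your proposal is correct and coincides with the paper's treatment: the corollary is stated as an immediate consequence of Theorem \ref{bard}, obtained exactly as you do by taking $F=f$ and $\Omega=M\setminus D(r)=\{f>r\}$, which is open since $D(r)$ is a closed sublevel set of the smooth function $f$. Nothing further is required.
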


Using Corollary \ref{bardis}, we have the following useful proposition.
\begin{proposition}\label{keyprop}
If a continuous and Lipschitz function $u$ satisfies $\Delta_f u\le h$
on $M\setminus D(r)$ in the barrier sense, then
\begin{align*}
-\int_{\Sigma(r)}\langle\nabla u, \tfrac{\nabla f}{|\nabla f|}\rangle e^{-f}\le \int_{M\setminus D(r)} h\, e^{-f}
\end{align*}
holds for almost everywhere sufficiently large $r$.
\end{proposition}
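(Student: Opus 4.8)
The plan is to pass from the barrier inequality to its distributional form, test it against a one-parameter family of radial cutoffs built from $f$, and recover the boundary term over $\Sigma(r)$ by a Lebesgue-point argument — which is exactly why one can only expect the conclusion at almost every $r$.

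First I would fix $r_0$ large enough that $D(r_0)$ is compact (possible by Theorem \ref{pote}, since $f$ is comparable to the distance squared, and one may also arrange $|\nabla f|\ne 0$ off $D(r_0)$) and that $\Delta_f u\le h$ holds on $M\setminus D(r_0)$ in the barrier sense. By Corollary \ref{bardis} this holds there in the sense of distribution, i.e.
\[
\int_{M\setminus D(r_0)} u\,\Delta_f\phi\,e^{-f}\le\int_{M\setminus D(r_0)} h\,\phi\,e^{-f}\qquad\text{for all }0\le\phi\in C_c^\infty(M\setminus D(r_0)).
\]
Since $u$ is Lipschitz we have $\nabla u\in L^\infty_{\mathrm{loc}}$, and because $\Delta_f$ is self-adjoint for $e^{-f}dv$ one integration by parts (valid for Lipschitz $u$ by approximation) turns this into $-\int_{M\setminus D(r_0)}\langle\nabla u,\nabla\phi\rangle e^{-f}\le\int_{M\setminus D(r_0)} h\,\phi\,e^{-f}$.

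Next, for $r>r_0$, small $\epsilon>0$ and large $b$, I would take $\phi=\eta(f)$ with $\eta\in C^\infty(\mathbb{R})$, $0\le\eta\le1$, $\eta\equiv0$ on $(-\infty,r]$, $\eta\equiv1$ on $[r+\epsilon,b]$, $\eta\equiv0$ on $[2b,\infty)$, $0\le\eta'\le C/\epsilon$ on $[r,r+\epsilon]$ (independent of $b$) and $|\eta'|\le C/b$ on $[b,2b]$; this $\phi$ is admissible because $\{r\le f\le 2b\}$ is compact and contained in the open set $M\setminus D(r_0)$. Using $\nabla\phi=\eta'(f)\nabla f$ gives $-\int_M\eta'(f)\langle\nabla u,\nabla f\rangle e^{-f}\le\int_M\eta(f)\,h\,e^{-f}$. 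Letting $b\to\infty$, the contribution of the left side on $\{b\le f\le 2b\}$ is bounded by $Cb^{-1/2}\int_{\{f\ge b\}}e^{-f}$, using $|\langle\nabla u,\nabla f\rangle|\le\mathrm{Lip}(u)\,|\nabla f|\le\mathrm{Lip}(u)\sqrt f$ (from $R\ge0$ and $R+|\nabla f|^2=f$), and this tends to $0$ since $V_f(M)=\int_Me^{-f}dv<\infty$; dominated convergence (with $|h|e^{-f}\in L^1$) handles the right side. One is left with $-\int_M\zeta'(f)\langle\nabla u,\nabla f\rangle e^{-f}\le\int_M\zeta(f)\,h\,e^{-f}$, where $\zeta\equiv0$ on $(-\infty,r]$, $\zeta\equiv1$ on $[r+\epsilon,\infty)$, $0\le\zeta'\le C/\epsilon$. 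By the co-area formula the left side equals $-\int_r^{r+\epsilon}\zeta'(s)I(s)\,ds$ with $I(s):=\int_{\Sigma(s)}\langle\nabla u,\tfrac{\nabla f}{|\nabla f|}\rangle e^{-f}$, and $I\in L^1_{\mathrm{loc}}$ since $\int_a^b|I|\le\mathrm{Lip}(u)\int_{D(a,b)}|\nabla f|e^{-f}<\infty$. As $\int_r^{r+\epsilon}\zeta'=1$, $\zeta'\ge0$, $\zeta'\le C/\epsilon$, letting $\epsilon\to0$ the left side tends to $-I(r)$ at every Lebesgue point $r$ of $I$, and the right side tends to $\int_{\{f>r\}}h\,e^{-f}=\int_{M\setminus D(r)}h\,e^{-f}$ (using that $\{f=r\}$ is null, hence $\Sigma(r)$ a regular level set, for a.e. $r$). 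Therefore $-I(r)\le\int_{M\setminus D(r)}h\,e^{-f}$ for a.e. $r>r_0$, which is the assertion.

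The hard part will be this last measure-theoretic step. Because $u$ is only Lipschitz, the boundary trace $I(r)=\int_{\Sigma(r)}\langle\nabla u,\tfrac{\nabla f}{|\nabla f|}\rangle e^{-f}$ is only defined for a.e.\ $r$ (it is the co-area derivative of a volume integral), so the statement is inherently an almost-everywhere one; the crux is extracting the pointwise value $I(r)$ from the averaged quantity $\int_M\zeta'(f)\langle\nabla u,\nabla f\rangle e^{-f}$ via Lebesgue differentiation, while simultaneously discarding the boundary term at spatial infinity using the finiteness of the weighted volume and the linear growth bound $|\nabla f|\le\sqrt f$.
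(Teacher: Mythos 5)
Your proposal is correct and follows essentially the same route as the paper's proof: pass from the barrier to the distributional inequality via Corollary \ref{bardis}, integrate by parts using the Lipschitz regularity of $u$, test against cutoffs of the form $\eta(f)$ concentrating on the annulus $D(r,r+\epsilon)$, and recover the boundary trace at a.e.\ $r$ by the co-area formula and Lebesgue differentiation. Your explicit truncation at infinity (the cutoff on $[b,2b]$ controlled by $|\nabla f|\le\sqrt{f}$ and $V_f(M)<\infty$) is a welcome justification of the paper's more terse assertion that $\psi(f)$ lies in $W_0^{1,2}(M\setminus D(r),e^{-f}dv)$, but it does not change the argument.
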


\begin{remark}\label{inquiinte}
From Proposition \ref{keyprop}, we would like to point out that the integral
quantity $\int_{M\setminus D(r)}\Delta_f u e^{-f}$ can be essentially viewed
as $-\int_{\Sigma(r)}\langle\nabla u,\tfrac{\nabla f}{|\nabla f|}\rangle e^{-f}$.
That is to say, the classical divergence theorem still holds in the barrier setting.
This viewpoint is also suitable to the following Propositions \ref{keyprop2}
and \ref{keyprop3}.
\end{remark}

\begin{proof}[Proof of Proposition \ref{keyprop}]
By Corollary \ref{bardis}, we have that
\begin{equation}\label{distribution}
\int_{M\setminus D(r)} u\Delta_f \phi \,e^{-f}\le \int_{M\setminus D(r)} h \phi\, e^{-f}
\end{equation}
for any $\phi\geq 0$ with $\phi\in C_c^\infty(M\setminus D(r), e^{-f}dv)$.
Using the integration by parts on the left hand side of \eqref{distribution},  we have
\begin{equation}\label{disinte}
-\int_{M\setminus D(r)} \nabla u\cdot\nabla \phi\,  e^{-f}\le \int_{M\setminus D(r)} h \phi\, e^{-f}
\end{equation}
for any $\phi\geq 0$ with $\phi\in C_c^\infty(M\setminus D(r), e^{-f}dv)$.
Since $u$ is continuous and Lipschitz, it is easy to see that the above integration
inequality also holds for $\phi\in W_0^{1, 2}(M\setminus D(r), e^{-f}dv)$.

Now we shall construct a cut-off function $\phi\in W_0^{1, 2}(M\setminus D(r), e^{-f}dv)$
to derive our conclusion. For $r>0$ and $\Delta r>0$, consider a continuous function $\psi(s):\mathbb{R}\to[0,1]$ satisfying $\psi(s)=0$ if $s\le r$, $\psi(s)=1$ if $s\ge r+\Delta r$
and $\psi(s)=\frac{s-r}{\Delta r}$ if $r\le s\le r+\Delta r$.
Define $\phi(x)=\psi(f(x))$, where $x\in M$. Then
\begin{align*}
\int_{M\setminus D(r)} \nabla u\cdot \nabla \phi\, e^{-f}
&=\frac{1}{\Delta r}\int_{D(r,r+\Delta r)}\nabla u\cdot \nabla f \,e^{-f}\\
&=\frac{1}{\Delta r}\int_r^{r+\Delta r}\int_{\Sigma(s)}\nabla u\cdot\tfrac{\nabla f}{|\nabla f|}e^{-f}.
\end{align*}
Letting $\Delta r\rightarrow 0+$ and combining \eqref{disinte}, by Lebesgue's differentiable theorem,
we obtain the desired result.
\end{proof}

Similar to the argument of Proposition \ref{keyprop}, we have the following
result in the unweighted setting. Here we only use Ishii's result
in \cite{Ishii} rather than Corollary \ref{bardis}.
\begin{proposition}\label{keyprop2}
If a continuous and Lipschitz function $u$ satisfies $\Delta_f u\le h$ on
$D(r)\subset(M,g,f)$ in the barrier sense, where $r$ is a large number,
then
\[
\int_{\Sigma(r)}\langle\nabla u,\tfrac{\nabla f}{|\nabla f|}\rangle
-\int_{\Sigma(r)}u|\nabla f|+\int_{D(r)}(2-R)u \le \int_{D(r)} h
\]
holds for almost everywhere sufficiently large $r$.
\end{proposition}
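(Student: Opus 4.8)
The plan is to follow the proof of Proposition~\ref{keyprop} in structure, the only substantive change being that we now work with the unweighted volume $dv$ in place of $e^{-f}dv$. The point to keep in mind is that $\Delta_f$ is not self-adjoint on $L^2(dv)$: its formal adjoint acting on a test function $\psi$ is
\[
\Delta_f^{\ast}\psi=\operatorname{div}\bigl(\nabla\psi+\psi\nabla f\bigr)=\Delta\psi+\nabla f\cdot\nabla\psi+(\Delta f)\,\psi .
\]
The extra term $(\Delta f)\,\psi=(2-R)\,\psi$ (using \eqref{tra}) is precisely what will generate the interior integral $\int_{D(r)}(2-R)u$ in the conclusion, while the boundary term $-\int_{\Sigma(r)}u|\nabla f|$ will arise from integrating $\int_{D(r)}\nabla u\cdot\nabla f$ by parts against the outward normal $\tfrac{\nabla f}{|\nabla f|}$ of $\Sigma(r)$.

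First I would upgrade the barrier inequality to an unweighted distributional inequality. Exactly as in the proof of Theorem~\ref{bard}, put $\overline u=e^{-f/2}u$, $\overline h=e^{-f/2}h$ and $H=\tfrac14|\nabla f|^2-\tfrac12\Delta f$; then $(\Delta-H)\overline u\le\overline h$ holds on the open region $\{f<r\}$ in the barrier sense, so by Ishii's criterion (Theorem~1 in \cite{Ishii}) it holds there in the sense of distributions with respect to $dv$. Testing this against $\phi=e^{f/2}\psi$, where $0\le\psi\in C_c^\infty(\{f<r\})$, and simplifying as in Theorem~\ref{bard} — but now retaining $dv$ rather than $e^{-f}dv$ — gives $\int u\,\operatorname{div}(\nabla\psi+\psi\nabla f)\,dv\le\int h\psi\,dv$. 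Since $u$ is Lipschitz we may integrate by parts once (the boundary term vanishing because $\psi$ has compact support) to obtain
\[
-\int_{D(r)}\nabla u\cdot\bigl(\nabla\psi+\psi\nabla f\bigr)\,dv\le\int_{D(r)}h\,\psi\,dv .
\]
Each term here is continuous in $\|\psi\|_{W^{1,2}(D(r))}$ — using $\nabla u\in L^\infty$ and that $h$ is bounded on the compact set $D(r)$ — so the inequality extends to all $0\le\psi\in W_0^{1,2}(\{f<r\})$.

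Next I would run the cutoff argument of Proposition~\ref{keyprop}. For small $\Delta r>0$, take $\psi=\eta(f)$ with $\eta\equiv1$ on $[0,r-\Delta r]$, $\eta(s)=(r-s)/\Delta r$ on $[r-\Delta r,r]$, and $\eta\equiv0$ on $[r,\infty)$; then $\psi\in W_0^{1,2}(\{f<r\})$ and $\nabla\psi=-\tfrac{1}{\Delta r}\mathbf{1}_{\{r-\Delta r<f<r\}}\nabla f$, so the last display becomes
\[
\frac{1}{\Delta r}\int_{\{r-\Delta r<f<r\}}\nabla u\cdot\nabla f\,dv-\int_{M}\eta(f)\,\nabla u\cdot\nabla f\,dv\le\int_{M}h\,\eta(f)\,dv .
\]
Letting $\Delta r\to0^{+}$: by the coarea formula and Lebesgue's differentiation theorem the first term tends, for a.e.\ $r$, to $\int_{\Sigma(r)}\langle\nabla u,\tfrac{\nabla f}{|\nabla f|}\rangle$ (the map $s\mapsto\int_{\Sigma(s)}\langle\nabla u,\tfrac{\nabla f}{|\nabla f|}\rangle$ lies in $L^1_{\mathrm{loc}}$ because $\nabla u$ is bounded); the second term tends to $-\int_{D(r)}\nabla u\cdot\nabla f$ by dominated convergence; and the right-hand side tends to $\int_{D(r)}h$. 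Finally, for a.e.\ $r$ — a regular value of $f$, by Sard — $D(r)$ is a compact manifold with smooth boundary $\Sigma(r)$ and $u\nabla f$ is Lipschitz on it, so the divergence theorem together with $\operatorname{div}(u\nabla f)=\nabla u\cdot\nabla f+(2-R)u$ gives $\int_{D(r)}\nabla u\cdot\nabla f=\int_{\Sigma(r)}u|\nabla f|-\int_{D(r)}(2-R)u$. Substituting this into the limit inequality yields the proposition.

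The only genuinely delicate point is the first step: carrying the barrier inequality over to the \emph{unweighted} distributional inequality through the Schr\"odinger substitution and Ishii's theorem — which is exactly why one invokes \cite{Ishii} directly here rather than Corollary~\ref{bardis} — and keeping careful track of the $(\Delta f)\psi$ term in $\Delta_f^{\ast}$. Everything past that is the same cutoff-and-coarea bookkeeping as in Proposition~\ref{keyprop}, up to the routine ``almost every $r$'' caveats (regular values and Lebesgue points).
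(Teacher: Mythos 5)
Your proof is correct and follows essentially the same route as the paper's: Ishii's criterion to convert the barrier inequality into an unweighted distributional one, then the cutoff $\eta(f)$ with the coarea formula and Lebesgue differentiation. The only cosmetic difference is that you integrate by parts fully and recover the terms $-\int_{\Sigma(r)}u|\nabla f|$ and $\int_{D(r)}(2-R)u$ from a final application of the divergence theorem to $u\nabla f$, whereas the paper evaluates $\int u\,\nabla f\cdot\nabla\phi$ and $\int u\,\Delta f\,\phi$ directly against the cutoff.
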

\begin{proof}[Proof of Proposition \ref{keyprop2}]
Since $\Delta_f u\le h$ holds on $D(r)$ in the barrier sense,
by Ishii's criterion (see Theorem 1 in \cite{Ishii}), we have
\begin{align*}
\int_{D(r)}u(x)\Delta\phi+ \int_{D(r)} \nabla f\cdot \nabla \phi u +\int_{D(r)}\Delta f \phi \leq \int_{D(r)}h \phi
\end{align*}
for any $\phi\geq 0$ with $\phi\in C_c^\infty(D(r),dv)$. Integration by parts yields
\begin{equation}\label{inteinequ}
-\int_{D(r)}\nabla u\cdot \nabla \phi + \int_{D(r)} \nabla f\cdot \nabla \phi u +\int_{D(r)}\Delta f \phi \leq \int_{D(r)}h \phi
\end{equation}
for any $\phi\geq 0$ with $\phi\in C_c^\infty(D(r),dv)$.
Since $u$ is continuous and Lipschitz, the above inequality also
holds for $\phi\in W_0^{1, 2}(D(r),dv)$.

For a large $r>0$ and a small $0<\Delta r<1/2$, consider a continuous function $\psi(s):\mathbb{R}\to[0,1]$ satisfying $\psi(s)=1$ if $s\le r-\Delta r$, $\psi(s)=0$ if $s\geq r$
and $\psi(s)=\frac{r-s}{\Delta r}$ if $r-\Delta r\le s\le r$.
Define $\phi(x)=\psi(f(x))$, where $x\in M$. Then
\begin{align*}
\int_{D(r)} \nabla u\cdot \nabla \phi
&=\frac{-1}{\Delta r}\int_{D(r-\Delta r,r)}\nabla u\cdot \nabla f \\
&=\frac{-1}{\Delta r}\int_{r-\Delta r}^{r}\int_{\Sigma(s)}\nabla u\cdot\tfrac{\nabla f}{|\nabla f|},
\end{align*}
and
\begin{align*}
\int_{D(r)} \nabla f\cdot \nabla \phi u
&=\frac{1}{\Delta r}\int_{D(r-\Delta r,r)}u |\nabla f|^2\\
&=\frac{1}{\Delta r}\int_{r-\Delta r}^{r}\int_{\Sigma(s)}u |\nabla f|.
\end{align*}
Substituting the above equalities into \eqref{inteinequ} and then letting $\Delta r\rightarrow 0^+$, we obtain
\begin{align*}
\int_{\Sigma(r)}\nabla u\cdot \tfrac{\nabla f}{|\nabla f|}
-\int_{\Sigma(r)}u|\nabla f|+\int_{D(r)}(2-R)u \le \int_{D(r)} h
\end{align*}
for almost everywhere sufficiently large $r$, where we used $\Delta f=2-R$ for shrinker
in the last equality. Thus the conclusion follows.
\end{proof}

In the end, we would like to point out that, by a similar argument of
Propositions \ref{keyprop} and \ref{keyprop2}, we can get the
following result.
\begin{proposition}\label{keyprop3}
If a continuous and Lipschitz function $u$ satisfies $\Delta_f u\le h$ on
$D(a,b)\subset(M,g,f)$ in the barrier sense, where $a$ and $b$ are
large numbers, then
\[
\int_{\Sigma(b)}\langle \nabla u, \tfrac{\nabla f}{|\nabla f|}\rangle -\int_{\Sigma(a)}\langle \nabla u, \tfrac{\nabla f}{|\nabla f|}\rangle
-\int_{\Sigma(b)}u|\nabla f|+\int_{\Sigma(a)}u|\nabla f|+\int_{D(a, b)}(2-R)u \le \int_{D(a,b)} h
\]
holds for almost everywhere sufficiently large $a$ and $b$.
\end{proposition}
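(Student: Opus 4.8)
The plan is to mimic the proofs of Propositions \ref{keyprop} and \ref{keyprop2}: first convert the barrier inequality into a distributional one via Ishii's criterion, then integrate by parts, and finally test against a one-parameter family of Lipschitz cut-off functions built out of $f$ which approximates the characteristic function of $D(a,b)$, passing to the limit with the co-area formula and Lebesgue's differentiation theorem. The only genuinely new feature compared with Proposition \ref{keyprop2} is the presence of \emph{two} boundary level sets, $\Sigma(a)$ and $\Sigma(b)$, rather than one, which is harmless because the two cut-off layers are disjoint and the corresponding limits decouple.

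\emph{Step 1 (from the barrier sense to distributions).} Exactly as in the proof of Proposition \ref{keyprop2}, since $\Delta_f u\le h$ on $D(a,b)$ in the barrier sense, Ishii's criterion (Theorem 1 in \cite{Ishii}) gives
\[
\int_{D(a,b)}u\,\Delta\phi+\int_{D(a,b)}u\,\nabla f\cdot\nabla\phi+\int_{D(a,b)}u\,(\Delta f)\,\phi\le\int_{D(a,b)}h\,\phi
\]
for every $\phi\ge0$ with $\phi\in C_c^\infty(D(a,b),dv)$. Since $u$ is continuous and Lipschitz, integrating the first term by parts and approximating yields
\[
-\int_{D(a,b)}\nabla u\cdot\nabla\phi+\int_{D(a,b)}u\,\nabla f\cdot\nabla\phi+\int_{D(a,b)}(2-R)\,u\,\phi\le\int_{D(a,b)}h\,\phi
\]
for every nonnegative $\phi\in W^{1,2}_0(D(a,b),dv)$, where I used $\Delta f=\tfrac n2-R=2-R$ on a $4$-dimensional shrinker.

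\emph{Step 2 (cut-off and limit).} For small $\Delta a,\Delta b>0$ let $\psi\colon\mathbb{R}\to[0,1]$ be the continuous piecewise-linear function that vanishes outside $[a,b]$, equals $1$ on $[a+\Delta a,\,b-\Delta b]$, and interpolates linearly on $[a,a+\Delta a]$ and on $[b-\Delta b,b]$, and put $\phi:=\psi(f)$, so that $\phi\in W^{1,2}_0(D(a,b),dv)$ and $\nabla\phi=\psi'(f)\nabla f$ is supported on $D(a,a+\Delta a)\cup D(b-\Delta b,b)$. Plugging $\phi$ into the inequality of Step 1 and using the co-area formula together with $|\nabla f|^2=f-R$, the term $\int\nabla u\cdot\nabla\phi$ becomes a difference of difference-quotients of $s\mapsto\int_{\Sigma(s)}\langle\nabla u,\tfrac{\nabla f}{|\nabla f|}\rangle$ at $s=a$ and $s=b$, the term $\int u\,\nabla f\cdot\nabla\phi$ becomes the analogous difference for $s\mapsto\int_{\Sigma(s)}u\,|\nabla f|$, while $\int(2-R)u\,\phi\to\int_{D(a,b)}(2-R)u$ and $\int h\,\phi\to\int_{D(a,b)}h$ as $\Delta a,\Delta b\to0^+$. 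Because $u$ is Lipschitz and $D(a,b)$ has finite volume, the two level-set functions are locally integrable, so Lebesgue's differentiation theorem allows one to pass to the limit at almost every $a$ and at almost every $b$; collecting and rearranging the resulting terms gives exactly
\[
\int_{\Sigma(b)}\langle\nabla u,\tfrac{\nabla f}{|\nabla f|}\rangle-\int_{\Sigma(a)}\langle\nabla u,\tfrac{\nabla f}{|\nabla f|}\rangle-\int_{\Sigma(b)}u|\nabla f|+\int_{\Sigma(a)}u|\nabla f|+\int_{D(a,b)}(2-R)u\le\int_{D(a,b)}h.
\]

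\emph{The main point to check.} There is no serious obstacle: everything reduces, via Ishii's criterion, to the scalar integration argument already carried out twice in Propositions \ref{keyprop} and \ref{keyprop2}. The one place needing attention is the handling of the two boundaries simultaneously: one first lets $\Delta a\to0^+$ (with $b,\Delta b$ fixed) and then $\Delta b\to0^+$, and since for small parameters the transition layers $\{a\le f\le a+\Delta a\}$ and $\{b-\Delta b\le f\le b\}$ are disjoint, the two limits are independent and the exceptional null sets of $a$ and of $b$ do not interact, which is exactly what ``almost everywhere sufficiently large $a$ and $b$'' means. One also uses, as in the Preliminaries, that for $a,b$ large $|\nabla f|$ does not vanish on $D(a,b)$, so that every $\Sigma(s)$ with $a\le s\le b$ is a smooth compact hypersurface and all the co-area and boundary manipulations are legitimate.
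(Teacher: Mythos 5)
Your proposal is correct and is exactly the argument the paper intends: the paper states Proposition \ref{keyprop3} ``by a similar argument of Propositions \ref{keyprop} and \ref{keyprop2}'', and your write-up carries out that argument faithfully --- Ishii's criterion to pass to the distributional inequality, integration by parts using that $u$ is Lipschitz, $\Delta f=2-R$, and a two-sided piecewise-linear cut-off in $f$ whose transition layers at $\Sigma(a)$ and $\Sigma(b)$ decouple in the limit via the co-area formula and Lebesgue differentiation. (You even correct, implicitly, the missing factor of $u$ in the third term of the displayed distributional inequality in the paper's proof of Proposition \ref{keyprop2}.)
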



\textbf{Acknowledgements}.
The  authors would like to thank Professors Fengjiang Li,    Xiaolong Li, Yu Li,  Yongjia Zhang and Xi-Nan Ma
 for helpful discussions.
\bibliographystyle{amsplain}

\end{document}